\newtheorem{theorem}{Theorem}[section]
\newtheorem{lemma}[theorem]{Lemma}
\newtheorem{prop}[theorem]{Proposition}
\newtheorem{cor}[theorem]{Corollary}
\newtheorem*{Theorem1'}{Theorem 1'}
\theoremstyle{definition}
\theoremstyle{remark}
\newcommand \GL{{\mathrm{GL}}}
\newcommand \Z{{\mathbb Z}}
\newcommand \N{{\mathbb N}}
\begin{document}

\title[The automorphism group of finite $p$-groups associated to the Macdonald group]{The automorphism group of finite $p$-groups associated to the Macdonald group}

\author{Alexander Montoya Ocampo}
\address{Department of Mathematics and Statistics, University of Regina, Canada}
\email{alexandermontoya1996@gmail.com}

\author{Fernando Szechtman}
\address{Department of Mathematics and Statistics, University of Regina, Canada}
\email{fernando.szechtman@gmail.com}
\thanks{The second author was partially supported by NSERC grant 2020-04062}

\subjclass[2020]{20D45, 20D15, 20D20}



\keywords{Automorphism group, $p$-group, Macdonald group, deficiency zero}

\begin{abstract} Given positive integers $p$ and $m$, where $p$ is assumed to be an odd prime, we determine
the automorphism groups of $p$-groups $J$, $H$, and $K$ of orders $p^{7m}$, $p^{6m}$, and $p^{5m}$, and nilpotency
classes 5, 4, and 3, respectively, all of which arise naturally from the Macdonald group
$\langle x,y\,|\, x^{[x,y]}=x^{1+p^m\ell},\, y^{[y,x]}=y^{1+p^m\ell}\rangle$, where $\ell\in\Z$ is relatively prime to $p$.
\end{abstract}

\maketitle

\section{Introduction}

We fix an odd prime $p\in\N$ and a positive integer $m$ throughout the paper. Given an integer $\ell$ relatively prime to $p$
such that $(p,m,[\ell]_p)\neq (3,1,[-1]_3)$, we consider
the following groups:
$$
J=\langle x,y\,|\, x^{[x,y]}=x^{1+p^m\ell},\, y^{[y,x]}=y^{1+p^m\ell}, x^{p^{3m}}=y^{p^{3m}}=1\rangle,
$$
$$
H=J/Z(J)=\langle x,y\,|\, x^{[x,y]}=x^{1+p^m\ell},\, y^{[y,x]}=y^{1+p^m\ell}, x^{p^{2m}}=y^{p^{2m}}=1\rangle,
$$
$$
K=H/Z(H)=\langle x,y\,|\, x^{[x,y]}=x^{1+p^m\ell},\, y^{[y,x]}=y^{1+p^m\ell}, x^{p^{2m}}=y^{p^{2m}}=[x,y]^{p^m}=1\rangle.
$$
Each of $J$, $H$, and $K$ is a finite $p$-group and in this paper we determine their automorphism groups.

Given a group $T$ and $i\geq 0$, we let $\langle 1\rangle=Z_0(T),Z_1(T),Z_2(T),\dots$ stand for the terms of the upper central series of~$T$, so that $Z_{i+1}(T)/Z_i(T)$ is the center of $T/Z_i(T)$, and we write $\mathrm{Aut}_i(T)$ for the kernel of the canonical map
$\mathrm{Aut}(T)\to \mathrm{Aut}(T/Z_i(T))$.
Thus, each $\mathrm{Aut}_i(T)$ is a normal subgroup of $\mathrm{Aut}(T)$. If $T$ is nilpotent of class $c$, we then
have the normal series
\begin{equation}
\label{norser}
\mathrm{Aut}_0(T)=1\subseteq \mathrm{Aut}_1(T)\subseteq\dots\subseteq \mathrm{Aut}_c(T)=\mathrm{Aut}(T).
\end{equation}

Take $T$ to be any of the groups $J,H,K$. Then $c=5$ if $T=J$, so $c=4$ if $T=H$ and $c=3$ if $T=K$. Whatever the case,
we determine the isomorphism type of each factor $\mathrm{Aut}_{i+1}(T)/\mathrm{Aut}_i(T)$ of (\ref{norser}) for all $0\leq i<c$.
The order of $\mathrm{Aut}(T)$ is obtained as an immediate consequence. We also use the subgroups
$\mathrm{Aut}_i(T)$ to obtain explicit generators of $\mathrm{Aut}(T)$. The group $\mathrm{Aut}(T)$
has a normal Sylow $p$-subgroup, say $\mathrm{Aut}(T)_p$, and the corresponding factor group is rather small. In fact,
$\mathrm{Aut}(T)/\mathrm{Aut}(T)_p$ has order~2 if $T=J$ or $T=H$, and is isomorphic to the dihedral group of order $2(p-1)$
(this reduces to the Klein 4-group if $p=3$) if $T=K$. We also give explicit generators and defining relations (i.e. a presentation)
for $\mathrm{Aut}(T)_p$. The above is successively done for $K$, $H$, and $J$, each case being used to produce the next one.

In order to understand the automorphism group of a given group one must first know its structure. In our
case, the structure of $J$ is elucidated in \cite{MS} under no restrictions on the given parameters. 
We rely heavily on this source, which readily yields the
structures of $H$ and $K$, as indicated below.

Given integers $\alpha$ and $\beta$, the group
$$
G(\alpha,\beta)=\langle x,y\,|\, x^{[x,y]}=x^{\alpha},\, y^{[y,x]}=y^{\beta}\rangle
$$
was studied by Macdonald \cite{M}. He showed that $G(\alpha,\beta)$ is finite provided $\alpha$ and $\beta$ are different from one,
in which case the prime factors of $|G(\alpha,\beta)|$ coincide with those of $(\alpha-1)(\beta-1)$.

Being a finite group on 2 generators with 2 defining relations makes $G(\alpha,\beta)$, $\alpha\neq 1\neq\beta$,
a finite group of deficiency zero. Such groups have been studied by several authors.
The first finite group requiring 3 generators and 3 defining relations was 
$M(a,b,c)=\langle x,y,z\,|\, x^y=x^a, y^{z}=y^b, z^x=z^c\rangle$, under suitable restrictions on $a,b,c$,
found by Mennicke \cite{Me} in 1959. It was intensively studied in \cite{A,AA,Ja,JR,S,W}.
In spite of this scrutiny, the order of $M(a,b,c)$ is known only in special cases. We want
to contribute to the problem of determining 
the structure of other 
finite groups of deficiency zero, of which there are many \cite{Al,AS,AS2,CR,CR2, CRT, J, K, P, R, W2, W3, W4, W5}. 
Our first step in this direction was taken in \cite{MS}, where we filled a gap in Macdonald's proof
of the nilpotency of $G=G(\alpha,\alpha)$ and we elucidated the structure of this group in detail.
The information about $G$ accumulated in \cite{MS} plays an essentially role
in the study of the more elaborate Wamsley groups \cite{W2}, as seen in \cite{PS}. This information also led us to analyze $\mathrm{Aut}(G)$
or, equivalently, the automorphism group of the Sylow subgroups of $G$, when $\alpha\neq 1$. 
The case of the Sylow 2-subgroup of $G$ is carried out in \cite{MS2}.
If $\alpha=1+p^m\ell$, where $p$, $m$, and  $\ell$ are as indicated above, 
then the Sylow $p$-subgroup of $G$ is none other than $J$. 


One incentive for our investigation of the automorphism group of the Sylow subgroups of $G$ when $\alpha\neq 1$ were the detailed and recent studies of the automorphism groups of other families of finite groups of prime power order, as found in \cite{BC, C, C2, Ma, Ma2}, for
instance, all of which are concerned with metacyclic groups. The structure of $J$ is considerably more complicated. 
Another motivation we had for our study was to apply it to
the question that arises by replacing $\alpha$ with another integer $\alpha'\neq 1$: when are the Sylow
subgroups of $G(\alpha,\alpha)$ and $G(\alpha',\alpha')$ corresponding to the same prime isomorphic? This turns
out to be a nontrivial isomorphism problem involving finite groups of prime power order, settled in \cite{MS3},
where the automorphism groups of $J$, $H$, and~$K$ play a crucial role. Yet
another reason to produce this paper is that our strategy to approach $\mathrm{Aut}(T)$, when $T$ is $J$, $H$, or~$K$,
may be of use
to study the automorphism groups of other finite nilpotent groups $T$. This strategy
essentially consists of understanding the factors $\mathrm{Aut}_{i+1}(T)/\mathrm{Aut}_i(T)$ of (\ref{norser}) for all $0\leq i<c$.
An imbedding tool, namely Proposition \ref{zi2}, 
allows us to derive information from the foregoing sections of (\ref{norser}) arising from $T/Z(T)$ to those arising from $T$.
Thus, we begin our work with $T=K=H/Z(H)$, continue with $T=H=J/Z(H)$, and culminate it with $T=J$. 

For $T=K$, we have $c=3$, and $\mathrm{Aut}_{2}(K)\cong Z_2(K)\times Z_2(K)$
via a natural map, as indicated in Proposition \ref{autk2}. Here $Z_2(K)\cong (\Z/p^m\Z)^3$, as given in Section \ref{autgmod2}.
Regarding $\mathrm{Aut}(K)/\mathrm{Aut}_2(K)$, we can view $K/Z_2(K)$ as a free module of rank 2 over $\Z/p^m\Z$.
As $\mathrm{Aut}_2(K)$ is the kernel of the natural map $\mathrm{Aut}(K)\to \mathrm{Aut}(K/Z_2(K))$,
we can view $\mathrm{Aut}(K)/\mathrm{Aut}_2(K)$ as a subgroup of $\GL_2(\Z/p^m\Z)$. A few samples
suggest that this subgroup is the dihedral group $D_{2p^{m-1}(p-1)}$ of order $2p^{m-1}(p-1)$ (which becomes
the Klein 4-group if $p=3$ and $m=1$). That this is always the case is proven in Theorem \ref{autk6}.

For $T=H$, we have $c=4$, and $\mathrm{Aut}_{3}(H)\cong Z_3(H)\times Z_3(H)$
via a natural map, as indicated in Proposition \ref{auth2}. Here $Z_3(H)\cong (\Z/p^m\Z)^2\rtimes \Z/p^{2m}\Z$, as given in Section 
\ref{autgmod56}. Regarding $\mathrm{Aut}(H)/\mathrm{Aut}_3(H)$, we can view it as a subgroup of 
$\mathrm{Aut}(K)/\mathrm{Aut}_2(K)\cong D_{2p^{m-1}(p-1)}$ by Proposition~\ref{zi2}. That in fact
$\mathrm{Aut}(H)/\mathrm{Aut}_3(H)\cong \Z/2\Z$ is proven in Theorem \ref{auth6}.

For $T=J$, we have $c=5$, and $\mathrm{Aut}_{2}(J)\cong Z_2(J)\times Z_2(J)$
via a natural map, as indicated in Proposition \ref{autg}. Here $Z_2(J)\cong \Z/p^m\times \Z/p^{m}\Z$, as given in Section 
\ref{sejotar}. Regarding $\mathrm{Aut}_3(J)/\mathrm{Aut}_2(J)$, applying Proposition \ref{zi2} twice, we obtain
\begin{equation}
\label{meter}
\mathrm{Aut}_3(J)/\mathrm{Aut}_2(J)\hookrightarrow
\mathrm{Aut}_2(H)/\mathrm{Aut}_1(H)\hookrightarrow
\mathrm{Aut}_1(K),
\end{equation}
where $\mathrm{Aut}_1(K)\cong Z(K)\times Z(K)$ via the natural map from Proposition \ref{autk}. Here $Z(K)\cong (\Z/p^m\Z)^2$,
as indicated in Section \ref{autgmod2}. A careful look at the imbedding (\ref{meter}) reveals
that its image, and hence $\mathrm{Aut}_3(J)/\mathrm{Aut}_2(J)$, is isomorphic to $(\Z/p^m\Z)^2$.
This takes considerable effort and is proven in Theorem \ref{autj3}. Now 
$\mathrm{Aut}_4(J)/\mathrm{Inn}(J)\mathrm{Aut}_3(J)$ imbeds into
$\mathrm{Aut}_3(H)/\mathrm{Inn}(H)\mathrm{Aut}_2(H)$, also by Proposition \ref{zi2}. But
the latter group is easily seen to be trivial in Proposition \ref{auth2}. Thus our imbedding
tool yields $\mathrm{Aut}_4(J)=\mathrm{Inn}(J)\mathrm{Aut}_3(J)$. A final use
of Proposition \ref{zi2} yields that $\mathrm{Aut}(J)/\mathrm{Aut}_4(J)\hookrightarrow \mathrm{Aut}(H)/\mathrm{Aut}_3(H)$.
As indicated above, the latter group has order 2, and hence so does the former via a natural automorphism of $J$
not belonging to $\mathrm{Aut}_4(J)$.


It is easy to see that the isomorphism type of $J$ remains invariant throughout $[\ell]_{p^{2m}}$.
Thus, we may suppose that $\ell>0$ and hence $\alpha=1+p^m\ell>1$. Accordingly, we fix an integer
$\alpha>1$ such that $v_p(\alpha-1)=m$ for the remainder of the paper, assuming only that if $p=3$ and $m=1$ then
$(\alpha-1)/3\equiv 1\mod 3$.

Our results are theoretical and computer-free, although several cases of $p$, $m$, and $\ell$
have been successfully tested with Magma,
which has an algorithm to construct the automorphism group of finite $p$-groups based on \cite{ELO}. 

In terms of notation, function composition proceeds from left to right. In agreement with this convention,
if $V$ is a module over a commutative ring $R$ with identity and $V$ admits a finite basis $\{v_1,\dots,v_n\}$,
in order to make the correspondence between $\mathrm{Aut}(V)$ and $\GL_n(R)$ an isomorphism, we will construct
the matrix of a given automorphism of $V$ row by row instead of column by column. Given a group $T$, with $a,b\in T$,
we adopt the following conventions:
$$
[a,b]=a^{-1}b^{-1}ab,\; b^a=a^{-1}ba,\; {}^a b=aba^{-1},
$$
recalling that
\begin{equation}\label{comfor} [a,bc]=[a,c][a,b]^c,\; [bc,a]=[b,a]^c\; [c,a].
\end{equation}

We write $\delta:T\to\mathrm{Aut}(T)$ for the canonical map $a\mapsto a\delta$, where $a\delta$ is conjugation by $a$,
namely the map $b\mapsto b^a$. The automorphisms of $T$ belonging to $\mathrm{Aut}_i(T)$ will be said to be $i$-central,
while those in $\mathrm{Aut}_1(T)$ will simply be said to be central. Note that central
and inner automorphisms commute with each other. Observe also that for $a\in T$, $a\delta\in \mathrm{Aut}_i(T)$ if and only if
$a\in Z_{i+1}(T)$, so that $\mathrm{Inn}(T)\cap \mathrm{Aut}_i(T)=Z_{i+1}(T)\delta\cong Z_{i+1}(T)/Z(T)$.

 Given a ring $R$ with $1\neq 0$, we write $\mathrm{Heis}(R)$ for the Heisenberg group over $R$.

\section{The automorphism group of $J/Z_2(J)$}\label{autgmod2}

We set $K(\alpha)=K=J/Z_2(J)$ throughout this section. According to \cite[Theorem 8.1]{MS}, $J$ is nilpotent of class 5,
whence $K$ is nilpotent of class 3. By \cite[Theorem 5.3]{MS}, $J$ has presentation $J=\langle A,B\,|\,
A^{[A,B]}=A^\alpha, B^{[B,A]}=B^\alpha, A^{p^{3m}}=1=B^{p^{3m}}\rangle$, and we set $C=[A,B]$. 

We write $\rho: J\to K$
for the canonical projection, and set $a=A\rho$, $b=B\rho$, and $c=C\rho$. By \cite[Theorem 8.1]{MS},
we have $Z_2(J)=\langle A^{p^{2m}},C^{p^{m}}\rangle$, where $A^{p^{2m}}B^{p^{2m}}=1$ as seen in~\cite[Section~6]{MS}.
By \cite[Theorem 5.1]{MS}, $K$ is generated by $a$ and $b$, subject to the defining relations
$$
a^{[a,b]}=a^\alpha, b^{[b,a]}=b^\alpha, a^{p^{2m}}=1, b^{p^{2m}}=1, [a,b]^{p^m}=1.
$$
We have an automorphism $a\leftrightarrow b$, say $\mu$, of $K$.
The proof of \cite[Theorem 8.1]{MS} shows that $|K|=p^{5m}$, that
$a,b,c$ have respective orders $p^{2m},p^{2m},p^m$, and that
every element of $K$ can be written in one and only one way as product of elements taken from $\langle a\rangle,
\langle b\rangle$, and $\langle c\rangle$, in any fixed order. As $Z_3(J)=\langle  A^{p^{m}}, B^{p^{m}}, C^{p^{m}}\rangle$ and
$Z_4(J)=\langle  A^{p^{m}}, B^{p^{m}}, C\rangle$ by \cite[Theorem 8.1]{MS}, it follows that
$Z(K)=\langle a^{p^{m}}, b^{p^{m}}\rangle$ as well as $Z_2(K)=\langle a^{p^{m}}, b^{p^{m}}, c\rangle$. The normal
form of the elements of $K$ yields $Z(K)\cong (\Z/p^m\Z)^2$ and $Z_2(K)\cong (\Z/p^m\Z)^3$. As shown in the proof of \cite[Theorem 8.1]{MS},
we also have
$\mathrm{Inn}(K)\cong K/Z(K)\cong J/Z_3(J)\cong \mathrm{Heis}(\Z/p^m\Z)$.

\begin{prop}\label{autk} For every $u,v\in Z(K)$ the assignment
$$
a\mapsto au,\; b\mapsto bv
$$
extends to a central automorphism $(u,v)\Omega$ of $K$ that fixes $Z_2(K)$ pointwise. Moreover, the corresponding map
$\Omega:Z(K)\times Z(K)\to \mathrm{Aut}(K)$ is a group monomorphism whose image is $\mathrm{Aut}_1(K)$. In particular,
$|\mathrm{Aut}_1(K)|=p^{4m}$.
\end{prop}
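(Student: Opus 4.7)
The plan is to verify the three claims of the proposition in sequence: that the assignment extends to an endomorphism, that this endomorphism is a bijection lying in $\mathrm{Aut}_1(K)$ and fixing $Z_2(K)$, and that $\Omega$ is an injective homomorphism whose image exhausts $\mathrm{Aut}_1(K)$. All steps use the defining relations of $K$, the normal form $a^ib^jc^k$, and the fact that $Z(K)\cong(\Z/p^m\Z)^2$.

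First, I would check that the assignment $a\mapsto au$, $b\mapsto bv$ respects the five defining relations of $K$. Since $u,v\in Z(K)$ and $Z(K)\cong(\Z/p^m\Z)^2$, we have $u^{p^m}=1=v^{p^m}$, so $(au)^{p^{2m}}=a^{p^{2m}}u^{p^{2m}}=1$ and likewise $(bv)^{p^{2m}}=1$. The commutator identities (\ref{comfor}), combined with centrality of $u$ and $v$, yield $[au,bv]=[a,b]=c$, so $[au,bv]^{p^m}=c^{p^m}=1$. For the Macdonald relation, $(au)^c=a^c u=a^\alpha u$ whereas $(au)^\alpha=a^\alpha u^\alpha$; these agree precisely when $u^{\alpha-1}=1$, which follows from $\alpha-1=p^m\ell$ and $u^{p^m}=1$. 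The relation involving $bv$ is symmetric. Hence $(u,v)\Omega$ extends to a well-defined endomorphism of $K$.

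Second, I would show this endomorphism is a bijection belonging to $\mathrm{Aut}_1(K)$ and fixing $Z_2(K)$ pointwise. Its image contains $au$ and $bv$; since $u^{p^m}=1=v^{p^m}$, it contains $a^{p^m}$ and $b^{p^m}$ and hence all of $Z(K)=\langle a^{p^m},b^{p^m}\rangle$, and therefore $u$, $v$, $a$, $b$ themselves. Finiteness of $K$ yields surjectivity and thus bijectivity. The same computation shows $a^{p^m},b^{p^m},c$ are fixed, so $Z_2(K)=\langle a^{p^m},b^{p^m},c\rangle$ is fixed pointwise. Since $a\equiv au$ and $b\equiv bv\pmod{Z(K)}$, the induced map on $K/Z(K)$ is trivial, placing $(u,v)\Omega$ in $\mathrm{Aut}_1(K)$.

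Third, using the left-to-right composition convention and the fact that $(u',v')\Omega$ fixes $Z(K)$ pointwise, the composite of $(u,v)\Omega$ and $(u',v')\Omega$ sends $a\mapsto au\mapsto au'\cdot u=a(uu')$ and similarly $b\mapsto b(vv')$. Thus $(u,v)\Omega\cdot(u',v')\Omega=(uu',vv')\Omega$, so $\Omega$ is a group homomorphism. Injectivity is immediate, since $au=a$ forces $u=1$ and likewise $v=1$. Conversely, given any $\phi\in\mathrm{Aut}_1(K)$, triviality of the induced map on $K/Z(K)$ forces $\phi(a)=au$ and $\phi(b)=bv$ for unique $u,v\in Z(K)$, so $\phi$ and $(u,v)\Omega$ agree on generators and hence on $K$. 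The order calculation $|\mathrm{Aut}_1(K)|=|Z(K)|^2=p^{4m}$ follows.

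The only delicate step is the first: the Macdonald relations survive the perturbation by central $p^m$-torsion elements precisely because the hypothesis $v_p(\alpha-1)=m$ matches the exponent of $Z(K)$, forcing $u^{\alpha-1}=v^{\alpha-1}=1$. Everything after that is bookkeeping with the normal form and with the identities (\ref{comfor}).
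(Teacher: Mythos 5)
Your proof is correct and follows essentially the same route as the paper: verify the defining relations using that $Z(K)$ has exponent $p^m$, check that $Z_2(K)$ is fixed pointwise, and then show $\Omega$ is an injective homomorphism hitting all of $\mathrm{Aut}_1(K)$. The only cosmetic difference is that you establish bijectivity of $(u,v)\Omega$ by a direct surjectivity argument, whereas the paper deduces it from the homomorphism property of $\Omega$ (the inverse being $(u^{-1},v^{-1})\Omega$); both are fine.
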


\begin{proof} Let $u,v\in Z(K)$. Then $[au,bv]=[a,b]=c$ by (\ref{comfor}). As $Z(K)$ has exponent $p^{m}$,
the defining relations of $K$ are preserved. This yields
an endomorphism $(u,v)\Omega$ of $K$. Note that $(u,v)\Omega$ fixes $Z_2(K)$ pointwise, because $c\mapsto [au,bv]=c$, and
$$
a^{p^{m}}\mapsto (au)^{p^{m}}=a^{p^{m}}u^{p^{m}}=a^{p^{m}},\;
b^{p^{m}}\mapsto (bv)^{p^{m}}=b^{p^{m}}v^{p^{m}}=b^{p^{m}}.
$$

It is now easy to see that $(u,v)\Omega\circ (u',v')\Omega=(u,v)(u',v')\Omega$. As $(1,1)\Omega=1_K$,
each $(u,v)\Omega$ is an automorphism of $K$. It is clear that $(u,v) \Omega$ is trivial only
when $(u,v)$ is trivial. Finally, by definition, every central automorphism of $K$ must be of the form $(u,v) \Omega$
for some $u,v\in Z(K)$.
\end{proof}

\begin{prop}\label{autk2} We have $\mathrm{Inn}(K)\cap\mathrm{Aut}_1(K)=\langle c\delta\rangle$,
$|\mathrm{Inn}(K)\cap\mathrm{Aut}_1(K)|=p^{m}$,
$$
\mathrm{Inn}(K)\mathrm{Aut}_1(K)=\mathrm{Aut}_2(K),\; |\mathrm{Aut}_2(K)|=p^{6m}.
$$
If $u,v\in Z_2(K)$, the assignment $a\mapsto au,b\mapsto bv$ extends to a 2-central automorphism $\Gamma_{(u,v)}$ of $K$.
\end{prop}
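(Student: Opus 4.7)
My plan is to establish the four assertions in sequence, using the general fact from the introduction that $\mathrm{Inn}(T) \cap \mathrm{Aut}_i(T) = Z_{i+1}(T)\delta$ for any nilpotent $T$, together with Proposition \ref{autk}.

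The first two assertions follow immediately from that general fact applied with $T = K$ and $i = 1$: we get $\mathrm{Inn}(K) \cap \mathrm{Aut}_1(K) = Z_2(K)\delta \cong Z_2(K)/Z(K)$. From the structural description at the start of this section, $Z_2(K) = \langle a^{p^m}, b^{p^m}, c\rangle$ and $Z(K) = \langle a^{p^m}, b^{p^m}\rangle$, so this quotient is $\langle cZ(K)\rangle$, cyclic of order $p^m$. Hence $\mathrm{Inn}(K) \cap \mathrm{Aut}_1(K) = \langle c\delta\rangle$ has order $p^m$.

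For the identity $\mathrm{Inn}(K)\mathrm{Aut}_1(K) = \mathrm{Aut}_2(K)$, the forward inclusion is easy: Proposition \ref{autk} gives $\mathrm{Aut}_1(K) \subseteq \mathrm{Aut}_2(K)$, and nilpotency class $3$ forces $[K, K] \subseteq Z_2(K)$, so $\mathrm{Inn}(K) \subseteq \mathrm{Aut}_2(K)$ as well. For the reverse, I would take an arbitrary $\phi \in \mathrm{Aut}_2(K)$ and write $\phi(a) = au$, $\phi(b) = bv$ for uniquely determined $u, v \in Z_2(K)$. Decompose $u = u_0 c^i$ and $v = v_0 c^j$ with $u_0, v_0 \in Z(K)$ and $0 \leq i, j < p^m$. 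Working in $K/Z(K) \cong \mathrm{Heis}(\Z/p^m\Z)$, where $\bar{c}$ is central, a short application of (\ref{comfor}) yields
\[
[a, b^i a^{-j}] \equiv c^i \pmod{Z(K)}, \qquad [b, b^i a^{-j}] \equiv c^j \pmod{Z(K)}.
\]
Thus the inner automorphism $\psi = (b^i a^{-j})\delta$ sends $a \mapsto ac^i z_1$ and $b \mapsto bc^j z_2$ for some $z_1, z_2 \in Z(K)$. Composing $\psi$ with the central automorphism $\theta = (u_0 z_1^{-1}, v_0 z_2^{-1})\Omega$ supplied by Proposition \ref{autk}, which fixes $Z_2(K)$ pointwise, I obtain an automorphism sending $a \mapsto au$ and $b \mapsto bv$; since $a, b$ generate $K$, this composition equals $\phi$, exhibiting $\phi \in \mathrm{Inn}(K)\mathrm{Aut}_1(K)$.

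This construction does double duty for the last assertion: running it with arbitrary $u, v \in Z_2(K)$ produces the desired automorphism $\Gamma_{(u,v)}$ as a composition of two known automorphisms of $K$, so no separate relation-checking is required. The order $|\mathrm{Aut}_2(K)| = p^{6m}$ then follows from the standard product formula $|\mathrm{Inn}(K)| \cdot |\mathrm{Aut}_1(K)|/|\mathrm{Inn}(K) \cap \mathrm{Aut}_1(K)| = p^{3m} \cdot p^{4m}/p^m$, using $|\mathrm{Inn}(K)| = |K/Z(K)| = p^{3m}$. The sole technical step is the displayed commutator computation in the Heisenberg quotient $K/Z(K)$; everything else is bookkeeping built on Proposition \ref{autk} and standard properties of the upper central series.
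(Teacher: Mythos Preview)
Your proof is correct. The difference from the paper's argument lies in how you establish $\mathrm{Aut}_2(K)\subseteq\mathrm{Inn}(K)\mathrm{Aut}_1(K)$: you do it constructively, decomposing an arbitrary $\phi\in\mathrm{Aut}_2(K)$ as a product $\psi\theta$ with $\psi$ inner and $\theta$ central, via the commutator identities in the class-two quotient $K/Z(K)\cong\mathrm{Heis}(\Z/p^m\Z)$. The paper instead uses a pure counting argument: any element of $\mathrm{Aut}_2(K)$ is determined by the pair $(u,v)\in Z_2(K)^2$ recording where $a$ and $b$ go, so $|\mathrm{Aut}_2(K)|\leq |Z_2(K)|^2=p^{6m}$, while the product formula already gives $|\mathrm{Inn}(K)\mathrm{Aut}_1(K)|=p^{6m}$; since the product sits inside $\mathrm{Aut}_2(K)$, equality is forced, and the fact that every pair $(u,v)$ arises is then read off a posteriori from equality of cardinalities. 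Your route delivers the final assertion about $\Gamma_{(u,v)}$ as an explicit factorization into known automorphisms, which is slightly more informative (no relation-checking needed), at the cost of the Heisenberg computation. One minor wording point: the inclusion $\mathrm{Aut}_1(K)\subseteq\mathrm{Aut}_2(K)$ is immediate from the definition of the $\mathrm{Aut}_i$, not really a consequence of Proposition~\ref{autk}.
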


\begin{proof} Given that $Z_2(K)/Z(K)$ is generated by the coset of $c$ and has order $p^{m}$, we infer that
$Z_2(K)/Z(K)\cong\mathrm{Inn}(K)\cap\mathrm{Aut}_1(K)=\langle c\delta\rangle$ has order $p^{m}$.

We deduce from Proposition \ref{autk} that $|\mathrm{Inn}(K)\mathrm{Aut}_1(K)|=p^{6m}$.
On the other hand, it is obvious  that $\mathrm{Aut}_1(K)\subset\mathrm{Aut}_2(K)$.
As $b^a=bc^{-1}$, $a^b=ac$, with $c\in Z_2(K)$, we have $\mathrm{Inn}(K)\subset\mathrm{Aut}_2(K)$ as well.
The very definition of $\mathrm{Aut}_2(K)$ forces $|\mathrm{Aut}_2(K)|\leq |Z_2(K)|\times |Z_2(K)|=p^{6m}$,
so $\mathrm{Aut}_2(K)=\mathrm{Inn}(K)\mathrm{Aut}_1(K)$ has order $p^{6m}$ and every possible pair
$(u,v)\in Z_2(K)\times Z_2(K)$ gives rise to a 2-central automorphism $a\mapsto au,b\mapsto bv$ of $K$.
\end{proof}

\begin{prop}\label{autk3} For $r,s\in\Z$ such that $rs\equiv 1\mod p^{2m}$, the assignment
\begin{equation}
\label{defrs}
a\mapsto a^r,\; b\mapsto b^s
\end{equation}
extends to an automorphism $f_r$ of $K$. The corresponding map
$f:[\Z/p^{2m}\Z]^\times \to \mathrm{Aut}(K)$ is a group monomorphism, whose image will be denoted by $S$.
\end{prop}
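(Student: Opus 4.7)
The plan is to verify that the assignment $a \mapsto a^r$, $b \mapsto b^s$ preserves all five defining relations of $K$, and then to derive bijectivity of each $f_r$ together with the homomorphism and injectivity properties of $f$. The relations $a^{p^{2m}} = 1$ and $b^{p^{2m}} = 1$ are preserved at sight, so the work reduces to handling the three relations involving $c = [a,b]$, all of which rest on understanding the commutator $[a^r, b^s]$ in $K$.

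The key preliminary observation is that, since $K/Z(K) \cong \mathrm{Heis}(\Z/p^m\Z)$, the image of $[a^r, b^s]$ in this quotient equals $c^{rs} Z(K)$, so in $K$ itself
\[
[a^r, b^s] = c^{rs}\, z \quad \text{for some } z \in Z(K).
\]
Because $c$ has order $p^m$, $Z(K)$ has exponent $p^m$, and $z$ commutes with $c$, the relation $[a^r, b^s]^{p^m} = 1$ is immediate. For the conjugation relation $a^{[a,b]} = a^\alpha$, I would use the centrality of $z$ to reduce the verification to $(a^r)^{c^{rs}} = (a^r)^\alpha$, then iterate $c^{-1} a c = a^\alpha$ to obtain $(a^r)^{c^{rs}} = a^{r\alpha^{rs}}$. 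This brings the matter to the arithmetic congruence $\alpha^{rs} \equiv \alpha \pmod{p^{2m}}$. Here is where the standing hypotheses enter: since $v_p(\alpha - 1) = m$ and $p$ is odd, the lifting-the-exponent identity $v_p(\alpha^n - 1) = m + v_p(n)$ yields $\alpha^{p^m} \equiv 1 \pmod{p^{2m}}$, so the multiplicative order of $\alpha$ modulo $p^{2m}$ divides $p^m$; the assumption $p^{2m} \mid rs - 1$ then a fortiori gives $p^m \mid rs - 1$, as required. The symmetric argument handles $b^{[b,a]} = b^\alpha$. I expect the interplay between the commutator structure of $K$ and the arithmetic of $\alpha$ in $\Z/p^{2m}\Z$ to be the only real obstacle in the verification.

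Once $f_r$ is a well-defined endomorphism of $K$, the same construction applied to $s$ (which is itself a unit mod $p^{2m}$ with inverse $r$) produces $f_s$, and checking on the generators shows $f_r f_s = \mathrm{id}_K$, so $f_r$ is an automorphism. That $f \colon r \mapsto f_r$ is a homomorphism is a direct check on generators, using left-to-right composition: $a f_r f_{r'} = (a^r) f_{r'} = (a^{r'})^r = a^{rr'} = a f_{rr'}$, and likewise for $b$. Finally, $f$ is injective because $a f_r = a^r = a$ together with $a$ having order exactly $p^{2m}$ forces $r \equiv 1 \pmod{p^{2m}}$.
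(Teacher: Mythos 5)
Your proposal is correct and follows essentially the same route as the paper: both rest on the observation that $[a^r,b^s]\equiv c^{rs}\bmod Z(K)$ (since $c\in Z_2(K)$), after which the conjugation relations and $[a,b]^{p^m}=1$ follow, and the homomorphism/injectivity claims are immediate. The only cosmetic difference is that the paper reduces $c^{rs}$ to $c$ modulo $Z(K)$ at once (as $c^{p^m}=1$ and $rs\equiv 1\bmod p^m$), whereas you keep $c^{rs}$ and verify $\alpha^{rs}\equiv\alpha\pmod{p^{2m}}$ via the order of $\alpha$ — two equivalent uses of the same congruence.
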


\begin{proof} Since $[a,b]\in Z_2(K)$, (\ref{comfor}) yields
\begin{equation}
\label{rse}
[a^r,b^s]\equiv [a,b]^{rs}\equiv [a,b]\mod Z(K).
\end{equation}
This implies that the first two defining relations of $K$ are preserved. The next two are obviously preserved.
The last defining relation of $K$ is also preserved by (\ref{rse}) and the fact that $Z(K)$ has exponent $p^{m}$.
This produces an endomorphism $f_r$ of $K$. As $f$ is clearly
an homomorphism, $f_r$ has inverse $f_s$ and is then an automorphism. Evidently, $f$ is a monomorphism.
\end{proof}

Thus $|S|=\varphi(p^{2m})=p^{2m-1}(p-1)=p^{m}\varphi(p^{m})$,  where $\varphi$ is Euler's totient function.

\begin{prop}\label{autk4} We have $f_r\in \mathrm{Aut}_2(K)$ if and only if $r\equiv 1\mod p^{m}$. Moreover,
$$
\mathrm{Aut}_2(K)\cap S=\langle f_{1+p^m}\rangle,\; |\mathrm{Aut}_2(K)\cap S|=p^{m},\;
\mathrm{Aut}_2(K) S/\mathrm{Aut}_2(K)\cong [\Z/p^{m}\Z]^\times.
$$
\end{prop}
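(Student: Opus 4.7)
The plan is to reduce everything to computations in the unit group $[\Z/p^{2m}\Z]^\times$ by examining how $f_r$ acts on the quotient $K/Z_2(K)$.

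First, I will establish the criterion $f_r\in\mathrm{Aut}_2(K)\Longleftrightarrow r\equiv 1\pmod{p^m}$ by inspecting the map induced by $f_r$ on $K/Z_2(K)$. Since $Z_2(K)=\langle a^{p^m},b^{p^m},c\rangle$ and $c=[a,b]$ already lies in $Z_2(K)$, the quotient $K/Z_2(K)$ is abelian. The normal form for elements of $K$ recalled at the start of this section gives $\langle a\rangle\cap Z_2(K)=\langle a^{p^m}\rangle$ and likewise for $b$, so $K/Z_2(K)\cong (\Z/p^m\Z)^2$ with the images of $a$ and $b$ as a basis. Hence $f_r$ is trivial on $K/Z_2(K)$ iff $r\equiv 1$ and $s\equiv 1\pmod{p^m}$, and since $rs\equiv 1\pmod{p^{2m}}$ either congruence forces the other.

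Next, I will identify $\mathrm{Aut}_2(K)\cap S$. Under the monomorphism $f$ this intersection corresponds to the subgroup $U=\{r\in[\Z/p^{2m}\Z]^\times : r\equiv 1\pmod{p^m}\}$, which is precisely the kernel of the reduction map $\pi:[\Z/p^{2m}\Z]^\times\twoheadrightarrow [\Z/p^m\Z]^\times$. A direct binomial expansion gives $(1+p^m)^k\equiv 1+kp^m\pmod{p^{2m}}$, showing $U=\langle 1+p^m\rangle$ is cyclic of order $p^m$; hence $\mathrm{Aut}_2(K)\cap S=\langle f_{1+p^m}\rangle$ has order $p^m$.

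Finally, the remaining isomorphism follows from the second isomorphism theorem:
\[
\mathrm{Aut}_2(K)S/\mathrm{Aut}_2(K)\;\cong\; S/(S\cap\mathrm{Aut}_2(K))\;\cong\;[\Z/p^{2m}\Z]^\times/U\;\cong\;[\Z/p^m\Z]^\times,
\]
the last isomorphism being induced by $\pi$. There is no serious obstacle; the only step requiring a moment of care is the identification of $K/Z_2(K)$, which is immediate from the unique normal form, after which the argument reduces to the well-known structure of the 1-units in $\Z/p^{2m}\Z$.
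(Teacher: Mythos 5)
Your proposal is correct and follows essentially the same route as the paper: reduce the membership criterion $f_r\in\mathrm{Aut}_2(K)$ to $r\equiv 1\bmod p^m$ via the normal form (equivalently, triviality of the induced map on $K/Z_2(K)$), identify $\mathrm{Aut}_2(K)\cap S$ with the kernel of $[\Z/p^{2m}\Z]^\times\to[\Z/p^m\Z]^\times$, and apply the second isomorphism theorem. Your direct binomial computation $(1+p^m)^k\equiv 1+kp^m\bmod p^{2m}$ is just a slightly more explicit version of the paper's observation that $f_{1+p^m}$ has order $p^m$ in a subgroup of that order.
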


\begin{proof} Note that $f_r\in \mathrm{Aut}_2(K)$ if and only if
$a^{r-1},b^{s-1}\in Z_2(K)=\langle a^{p^{m}}, b^{p^{m}}, c\rangle$. The normal form of the elements of $K$ makes
the last condition equivalent to $r\equiv 1\mod p^{m}$.

Let $T$ be the kernel of the canonical group epimorphism $[\Z/p^{2m}\Z]^\times\to [\Z/p^{m}\Z]^\times$.
Thus, $T$ corresponds to $\mathrm{Aut}_2(K)\cap S$ under $f$, whence
$$\mathrm{Aut}_2(K) S/\mathrm{Aut}_2(K)\cong
S/(\mathrm{Aut}_2(K)\cap S)\cong [\Z/p^{2m}\Z]^\times/T\cong [\Z/p^{m}\Z]^\times.$$
Moreover, as $|T|=\varphi(p^{2m})/\varphi(p^{m})=p^{m}$, it follows that $|\mathrm{Aut}_2(K)\cap S|=p^{m}$.
But $f_{1+p^m}$ is in $\mathrm{Aut}_2(K)\cap S$ and has order $p^{m}$, so it generates $\mathrm{Aut}_2(K)\cap S$.
\end{proof}


\begin{lemma}\label{basw} For $i,j\in\N$, the following conjugation formula holds in $K$:
$$
(b^j)^{a^i}= a^{(\alpha-1)ij(i-1)/2} c^{-ij}b^{(\alpha-1)ij(j+1)/2}b^j.
$$
\end{lemma}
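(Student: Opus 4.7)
The plan is to prove the formula by induction on $i\geq 0$, with the base case $i=0$ reducing to $(b^j)^{a^0}=b^j$ on both sides. First I would gather the commutation data in $K$ from the defining relations: $c=[a,b]$ gives $b^a=bc^{-1}$; the relations $a^c=a^\alpha$ and $b^{c^{-1}}=b^\alpha$ give $ac=ca^\alpha$ and $cb=b^\alpha c$, and hence $c^a=ca^{-(\alpha-1)}$ and $bc^{-1}=c^{-1}b^\alpha$. The crucial arithmetic input throughout is $(\alpha-1)^2\equiv 0\pmod{p^{2m}}$, coming from $\alpha-1=p^m\ell$: this makes $a^{\alpha-1}$ and $b^{\alpha-1}$ central in $K$ and gives $\alpha^n\equiv 1+n(\alpha-1)\pmod{p^{2m}}$ for every integer $n$.

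The heart of the proof is the case $i=1$. One computes $(b^j)^a=(bc^{-1})^j$. Because $[b,c^{-1}]=b^{\alpha-1}\in Z(K)$, the subgroup $\langle b,c^{-1}\rangle$ has class $2$, so the standard collection identity $(xy)^j=x^j y^j[y,x]^{j(j-1)/2}$ yields $(bc^{-1})^j=b^j c^{-j} b^{-(\alpha-1)j(j-1)/2}$. Iterating $bc^{-1}=c^{-1}b^\alpha$ gives $b^N c^{-j}=c^{-j}b^{N\alpha^j}$, and combining this with $\alpha^j\equiv 1+j(\alpha-1)\pmod{p^{2m}}$ rearranges the expression into
\[
(b^j)^a=c^{-j}b^{j+(\alpha-1)j(j+1)/2},
\]
which matches the claimed formula at $i=1$.

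For the induction step, write the hypothesis as $(b^j)^{a^i}=a^{P_i}c^{-ij}b^{Q_i}$ with $P_i=(\alpha-1)ij(i-1)/2$ and $Q_i=j+(\alpha-1)ij(j+1)/2$, and conjugate each factor by $a$. Since $P_i$ is a multiple of $p^m$, $a^{P_i}$ lies in $Z(K)$ and is fixed; from $c^a=ca^{-(\alpha-1)}$ one obtains $(c^{-ij})^a=a^{(\alpha-1)ij}c^{-ij}$; and applying the $i=1$ identity with $j$ replaced by $Q_i$ gives $(b^{Q_i})^a=c^{-Q_i}b^{Q_i+(\alpha-1)Q_i(Q_i+1)/2}$. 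Assembling the three resulting factors and reducing exponents modulo the orders of $a$, $c$, $b$ (namely $p^{2m}$, $p^m$, $p^{2m}$), and using $Q_i\equiv j\pmod{p^m}$ to simplify $(\alpha-1)Q_i(Q_i+1)$ to $(\alpha-1)j(j+1)$ modulo $p^{2m}$, the $a$-exponent collapses to $(\alpha-1)ij(i+1)/2=P_{i+1}$, the $c$-exponent to $-(i+1)j$, and the $b$-exponent to $Q_{i+1}$.

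The argument is essentially exponent bookkeeping; the only real obstacle is juggling the $p^m$- versus $p^{2m}$-level congruences when merging the three exponent identities. The guiding principle that makes everything routine is that any expression of the form $(\alpha-1)\cdot N$ is automatically divisible by $p^m$, hence central when appearing as an exponent of $a$ or $b$, and depends on $N$ only modulo $p^m$. With this in mind the induction runs mechanically.
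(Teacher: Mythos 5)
Your proof is correct, and it takes a genuinely different route from the paper's. You induct on $i$, grounding everything in the case $i=1$, which you handle by observing that $\langle b,c\rangle$ has class $2$ (since $[b,c^{-1}]=b^{\alpha-1}\in Z(K)$) and invoking the collection identity $(xy)^j=x^jy^j[y,x]^{j(j-1)/2}$, followed by the commutation $b^Nc^{-1}=c^{-1}b^{N\alpha}$ and the congruence $\alpha^j\equiv 1+j(\alpha-1)\pmod{p^{2m}}$. The paper instead starts from the closed-form conjugation formula $b^{a^i}=ba^{(\alpha-1)i(i-1)/2}c^{-i}$ (asserted without proof there, and only derived later in Section 4), writes $(b^j)^{a^i}=(b^{a^i})^j$, and evaluates $(bc^{-i})^j=c^{-ij}b^{\alpha^i(\alpha^{ij}-1)/(\alpha^i-1)}$ via a geometric-series exponent and binomial expansions modulo $p^{2m}$. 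Your version is more self-contained, since it does not presuppose the closed form for $b^{a^i}$; the cost is the extra bookkeeping in the inductive step, where you correctly isolate the one genuinely delicate point, namely that $Q_i\equiv j\pmod{p^m}$ suffices to replace $(\alpha-1)Q_i(Q_i+1)/2$ by $(\alpha-1)j(j+1)/2$ modulo $p^{2m}$ (which uses that $p$ is odd, so the halved difference is still an integer multiple of $p^m$). All the commutation relations you list ($b^a=bc^{-1}$, $c^a=ca^{-(\alpha-1)}$, $cb=b^\alpha c$, $[c^{-1},b]=b^{-(\alpha-1)}$) check out against the presentation of $K$.
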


\begin{proof} We have
\begin{equation}
\label{psq}
(b^j)^{a^i}=(b^{a^i})^j=(b a^{(\alpha-1)i(i-1)/2}c^{-i})^j=a^{(\alpha-1)ij(i-1)/2}(b c^{-i})^j,
\end{equation}
where
\begin{equation}
\label{ys}
(b c^{-i})^j=c^{-ij}b^{\alpha^i(\alpha^{ij}-1)/(\alpha^i-1)}.
\end{equation}
Here
$$
\alpha^{ij}=(1+(\alpha^{i}-1))^{j}=1+j(\alpha^i-1)+{{j}\choose{2}}(\alpha^i-1)^2+{{j}\choose{3}}(\alpha^i-1)^3+\cdots,
$$
so
$$
(\alpha^{ij}-1)/(\alpha^i-1)\equiv j +(\alpha^i-1)j(j-1)/2\mod p^{2m},
$$
and therefore
$$
\alpha^i(\alpha^{ij}-1)/(\alpha^i-1)\equiv j + (\alpha^i-1)j+\alpha^i(\alpha^i-1)j(j-1)/2 \mod p^{2m}.
$$
As $\alpha(\alpha-1)\equiv \alpha-1\mod p^{2m}$ and
$1+\alpha+\cdots+\alpha^{i-1}\equiv i\mod p^m$, we infer
\begin{equation}
\label{xs}
\alpha^i(\alpha^{ij}-1)/(\alpha^i-1)\equiv j + (\alpha-1)ij+(\alpha-1)ij(j-1)/2\equiv j + (\alpha-1)ij(j+1)/2 \mod p^{2m}.
\end{equation}
Replacing (\ref{xs}) in (\ref{ys}) and going back to (\ref{psq}) yields  the desired result.
\end{proof}

\begin{prop}\label{ele} Let $\beta$ be any integer different from 1 such that $v_p(\beta-1)=m$
and if $p=3$ and $m=1$ then $(\beta-1)/3\equiv 1\mod 3$. We then have $K(\alpha)\cong K(\beta)$.
\end{prop}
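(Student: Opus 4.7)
The approach is to exhibit an explicit isomorphism $\phi\colon K(\beta) \to K(\alpha)$ by substituting new generators. Write $\alpha - 1 = p^m\ell$ and $\beta - 1 = p^m \ell'$ with $\ell,\ell'$ coprime to $p$; since $\ell$ is a unit modulo $p^m$, I can pick an integer $t$ coprime to $p$ with $t\ell \equiv \ell' \pmod{p^m}$, and I will send the generators $a,b$ of $K(\beta)$ to $a^t, b \in K(\alpha)$.

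The key computation is $c' := [a^t, b]$ inside $K(\alpha)$. Lemma \ref{basw} with $i=t$, $j=1$ yields $b^{a^t} = a^{(\alpha-1)t(t-1)/2} c^{-t} b^{(\alpha-1)t+1}$, and hence $[a^t, b] = (b^{a^t})^{-1} b$. Pushing the central factor $a^{-(\alpha-1)t(t-1)/2}$ past $b$, applying $c^t b = b^{\alpha^t} c^t$, and using $\alpha^t \equiv 1 + t(\alpha-1) \pmod{p^{2m}}$ collapses the $b$-exponent to zero, yielding
\[
c' = c^{t}\, a^{-(\alpha-1)t(t-1)/2}.
\]

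Next I verify that $(a^t, b)$ satisfies the five defining relations of $K(\beta)$. The order relations $(a^t)^{p^{2m}} = 1 = b^{p^{2m}}$ are immediate, and $(c')^{p^m} = 1$ follows from $c^{p^m} = 1$, the centrality of the $a$-factor, and the fact that $(\alpha - 1) p^m = p^{2m}\ell$ (using that $t(t-1)/2 \in \Z$). For the two nontrivial relations $(a^t)^{c'} = (a^t)^\beta$ and $b^{(c')^{-1}} = b^\beta$: the central $a$-factor in $c'$ acts trivially by conjugation on $a$ and $b$, so these both reduce, via $a^c = a^\alpha$ and $b^{c^{-1}} = b^\alpha$, to the single congruence $\alpha^t \equiv \beta \pmod{p^{2m}}$. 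Expanding $\alpha^t = (1 + p^m\ell)^t \equiv 1 + tp^m\ell \pmod{p^{2m}}$ and invoking the choice $t\ell \equiv \ell' \pmod{p^m}$ supplies this.

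By von Dyck's theorem these verifications produce a homomorphism $\phi\colon K(\beta) \to K(\alpha)$ with $\phi(a) = a^t$, $\phi(b) = b$. Because $t$ is coprime to $p$, $\langle a^t, b\rangle = K(\alpha)$, so $\phi$ is surjective; as $|K(\alpha)| = p^{5m} = |K(\beta)|$ by \cite[Theorem 8.1]{MS}, it is an isomorphism. The only real work is the commutator computation and the exponent arithmetic modulo $p^{2m}$; the side hypothesis on $(\alpha-1)/3 \pmod 3$ in the case $(p,m) = (3,1)$ enters only indirectly, to validate the structural facts about $K(\alpha)$ and $K(\beta)$ quoted from \cite[Theorem 8.1]{MS}.
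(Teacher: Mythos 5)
Your proof is correct and is essentially the paper's argument: both define the map on generators by raising one generator to a power $t$ chosen so that $\alpha^{t}\equiv\beta\pmod{p^{2m}}$, verify the five defining relations via von Dyck, and conclude from $|K(\alpha)|=p^{5m}=|K(\beta)|$. The only cosmetic differences are that the paper sends $e\mapsto b^{\ell}$ rather than $d\mapsto a^{t}$, and it computes $[a,b^{\ell}]$ only modulo $Z(K(\alpha))$ (which suffices since central elements act trivially and $Z(K(\alpha))$ has exponent $p^{m}$) instead of your exact evaluation of $[a^{t},b]$ via Lemma \ref{basw}.
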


\begin{proof} We have
$K(\beta)=\langle d,e\,|\, d^{[d,e]}=d^\beta,\, e^{[e,d]}=e^\beta,  d^{p^{2m}}=1=e^{p^{2m}},[d,e]^{p^{m}}=1\rangle$.

Note that $\beta\equiv\alpha^\ell\mod p^{2m}$, where $\ell$ is an integer relatively prime to $p$, uniquely
determined modulo $p^{m}$. Consider the assignment
\begin{equation}\label{pab}
d\mapsto a,\; e\mapsto b^\ell.
\end{equation}
We claim that this extends to an isomorphism $K(\beta)\to K(\alpha)$. Indeed,
$c=[a,b]\in Z_2(K(\alpha))$, so $[a,b^\ell]\equiv [a,b]^\ell \mod Z(K(\alpha))$.
Assuming without loss that $\ell$ is positive,
$$
a^{[a,b^\ell]}=a^{[a,b]^\ell}=a^{\alpha^\ell}=a^\beta,\; b^{[b^\ell,a]}=b^{[b,a]^\ell}=b^{\alpha^\ell}=b^\beta,
$$
so $(b^\ell)^{[b^\ell,a]}=(b^\ell)^\beta$. We clearly also have $a^{p^{2m}}=1=(b^\ell)^{p^{2m}}$. Moreover, $[a,b^\ell]=[a,b]^\ell z$ for some $z\in Z(K(\alpha))$, which is a group of exponent~$p^m$,
so $[a,b^\ell]^{p^m}=1$.
All defining relations of $K(\beta)$ being fulfilled, the assignment (\ref{pab}) extends to a group epimorphism
$K(\beta)\to K(\alpha)$. As both groups have order $p^{5m}$, they are isomorphic.
\end{proof}

By Proposition \ref{ele}, the isomorphism type of $K(\alpha)$ is independent of $\ell$, and we take $\alpha=1+p^m$,
with inverse $\beta=1-p^m$ modulo $p^{2m}$. Then for any positive integer $d$, we have
\begin{equation}
\label{ded}
\alpha^d\equiv 1+d p^m\mod p^{2m}, \beta^d\equiv 1-d p^m \mod p^{2m}.
\end{equation}

Recall that $\mu\in\mathrm{Aut}(K)$ interchanges $a$ and $b$. If $r,s\in\Z$ are inverses modulo $p^{2m}$, then  (\ref{defrs}) gives
\begin{equation}
\label{nors}
(f_r)^\mu=f_s=(f_r)^{-1}.
\end{equation}

We note from (\ref{nors}) that $\mu$ normalizes $S$, so $S\langle \mu\rangle$ is a subgroup of $\mathrm{Aut}(K)$.
As $\mathrm{Aut}_2(K)$ is a normal subgroup of $\mathrm{Aut}(K)$, we infer that $\mathrm{Aut}_2(K)S\langle \mu\rangle$
is a subgroup of $\mathrm{Aut}(K)$ of order $2p^{7m-1}(p-1)$. We are ready to prove the following~result.

\begin{theorem}\label{autk6} The canonical map $\Lambda:\mathrm{Aut}(K)\to \mathrm{Aut}(K/Z_2(K))$
is a group homomorphism with kernel $\mathrm{Aut}_2(K)$ and image $(S\langle\mu\rangle)^\Lambda$, so that
$\mathrm{Aut}(K)=\mathrm{Aut}_2(K)S\langle \mu \rangle$ has order $2p^{7m-1}(p-1)$. Moreover,
$\mathrm{Aut}(K)/\mathrm{Aut}_2(K)\cong D_{2p^{m-1}(p-1)}$, the dihedral group of order $2p^{m-1}(p-1)$ (which becomes
the Klein 4-group if $p=3$ and $m=1$). Furthermore, every element of $\mathrm{Aut}(K)$ can be written in one and only one
way in the form $gf_r\mu^j$, where $g\in \mathrm{Aut}_2(K)$, $0\leq r<p^{m}$ is relatively prime to $p$, and $0\leq j\leq 1$.
\end{theorem}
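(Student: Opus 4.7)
My plan is to identify the image of $\Lambda$ as a dihedral subgroup of $\GL_2(\Z/p^m\Z)$ by first handling the easy inclusion, then using the defining relations of $K$ to constrain arbitrary automorphisms. The kernel of $\Lambda$ is $\mathrm{Aut}_2(K)$ by the definition of the latter. Identifying $K/Z_2(K)$ with the free $\Z/p^m\Z$-module of rank $2$ on the basis $\{aZ_2(K), bZ_2(K)\}$ gives $\mathrm{Aut}(K/Z_2(K)) \cong \GL_2(\Z/p^m\Z)$; under this identification, $(f_r)^\Lambda$ is the diagonal matrix with entries $r, r^{-1}$ and $\mu^\Lambda$ is the swap matrix. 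By (\ref{nors}), the image $(S\langle\mu\rangle)^\Lambda$ is dihedral of order $2p^{m-1}(p-1)$; moreover, since $\mu \notin \mathrm{Aut}_2(K)$, we have $S\langle\mu\rangle \cap \mathrm{Aut}_2(K) = S \cap \mathrm{Aut}_2(K)$, of order $p^m$ by Proposition~\ref{autk4}.

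The main task is to show $\mathrm{Aut}(K)^\Lambda \subseteq (S\langle\mu\rangle)^\Lambda$. Given $\phi \in \mathrm{Aut}(K)$, write $\phi(a) = a^ib^jz_1$ and $\phi(b) = a^kb^lz_2$ with $z_1, z_2 \in Z_2(K)$, and let $M = \begin{pmatrix} i & j \\ k & l\end{pmatrix}$. The defining relation $a^{[a,b]}=a^\alpha$ is equivalent to $[a,c]=a^{p^m}$, and symmetrically $[b,c]=b^{-p^m}$. Applying $\phi$ to $[a,[a,b]]=a^{p^m}$ yields $[\phi(a),[\phi(a),\phi(b)]]=\phi(a)^{p^m}$. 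Using bilinearity of commutators modulo $Z(K)$, I obtain $[\phi(a),\phi(b)]\equiv c^{\det M}\pmod{Z(K)}$ and $[\phi(a),c]=a^{ip^m}b^{-jp^m}$, so the left side equals $(a^{ip^m}b^{-jp^m})^{\det M}$. For the right side I would use the Hall--Petresco formula: for $p \geq 5$ both the class-$2$ correction $\binom{p^m}{2}$ and the class-$3$ correction $\binom{p^m}{3}$ are divisible by $p^m$, and $z_1^{p^m}=1$ as $Z_2(K)$ has exponent $p^m$, yielding $\phi(a)^{p^m}=a^{ip^m}b^{jp^m}$. Comparing coordinates in the basis $\{a^{p^m}, b^{p^m}\}$ of $Z(K)$ gives $i(\det M - 1)\equiv 0$ and $j(\det M + 1)\equiv 0\pmod{p^m}$; the symmetric analysis of $b^{[b,a]}=b^\alpha$ gives $l(\det M - 1)\equiv 0$ and $k(\det M + 1)\equiv 0\pmod{p^m}$.

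From these four congruences, using that $\det M \in (\Z/p^m\Z)^\times$ and that $2$ is a unit (since $p$ is odd, both $\det M - 1$ and $\det M + 1$ cannot simultaneously be non-units), a short case analysis shows $M$ is either diagonal with $\det M = 1$ and entries $(i, i^{-1})$ (so $\phi \in \mathrm{Aut}_2(K) f_i$ for any lift of $i$ to a unit modulo $p^{2m}$), or anti-diagonal with $\det M = -1$ and entries $(j, j^{-1})$ (so $\phi \in \mathrm{Aut}_2(K) f_j \mu$). This establishes $\mathrm{Aut}(K) = \mathrm{Aut}_2(K)S\langle\mu\rangle$; combined with $|\mathrm{Aut}_2(K)| = p^{6m}$ from Proposition~\ref{autk2}, this gives $|\mathrm{Aut}(K)| = 2p^{7m-1}(p-1)$ and $\mathrm{Aut}(K)/\mathrm{Aut}_2(K) \cong D_{2p^{m-1}(p-1)}$. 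Uniqueness of the normal form $gf_r\mu^j$ follows: the exponent of $\mu$ is determined by whether $\phi^\Lambda$ is diagonal or anti-diagonal, then $r \bmod p^m$ by the relevant matrix entry, and finally $g$ as the residual factor in $\mathrm{Aut}_2(K)$.

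The principal obstacle is the Hall--Petresco computation when $p = 3$, where $\binom{3^m}{3}$ has $3$-adic valuation only $m-1$, so the class-$3$ triple-commutator correction does not a priori vanish in $Z(K)$; the paper's auxiliary hypothesis $(\alpha-1)/3 \equiv 1 \pmod 3$ in the $p=3$, $m=1$ case presumably enters here to control the correction so that the derivation of the above congruences remains valid.
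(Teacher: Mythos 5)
Your overall strategy coincides with the paper's: compute $\Psi(a)^{p^m}$ in two ways --- once from the preservation of $a^{[a,b]}=a^\alpha$, which gives $a^{idp^m}b^{-jdp^m}$ with $d=\det M$, and once directly --- then compare coordinates in $Z(K)\cong(\Z/p^m\Z)^2$ to obtain the four congruences $i(d-1)\equiv j(d+1)\equiv k(d+1)\equiv l(d-1)\equiv 0\pmod{p^m}$, from which the dihedral image, the order count, and the normal form all follow correctly as you describe. Your one genuine departure is the direct computation of $p^m$-th powers: for $p\geq 5$ the Hall--Petrescu argument is a clean shortcut, since $\gamma_2(K)=Z_2(K)$ and $\gamma_3(K)=Z(K)$ both have exponent $p^m$ while $p^m$ divides both $\binom{p^m}{2}$ and $\binom{p^m}{3}$, so $(xy)^{p^m}=x^{p^m}y^{p^m}$ holds identically in $K$ and replaces the paper's explicit collection argument (Lemma \ref{basw} and the chain of computations leading to (\ref{coli12})). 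That part of your proposal is sound and arguably preferable.

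The gap is the case $p=3$, which you correctly identify but do not close, and which is where most of the paper's labor lies; as stated, your argument proves the theorem only for $p\geq 5$. For $p=3$ the class-$3$ correction $c_3^{\binom{3^m}{3}}$ does not vanish, and the paper computes it explicitly: $(a^ib^jz)^{3^m}=a^{i3^m}b^{j3^m}a^{3^{2m-1}i^2j}b^{3^{2m-1}ij^2}$, so the four congruences acquire the extra terms displayed in (\ref{coli15})--(\ref{coli18}) and the case analysis becomes genuinely harder: assuming $3\nmid i$ one first gets $d\equiv 1$ and $j\equiv 0$ only modulo $3^{m-1}$, must bootstrap these to congruences modulo $3^m$ when $m>1$, and needs a separate ad hoc argument when $m=1$. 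Your guess about where the hypothesis $(\alpha-1)/3\equiv 1\pmod 3$ enters is also off target: it is a standing assumption inherited from the structure theory of $J$ in \cite{MS}, compatible with the normalization $\alpha=1+p^m$ supplied by Proposition \ref{ele}, and it does not make the correction term disappear --- the corrected congruences have to be analyzed head on. Until that analysis is supplied, the $p=3$ case of the theorem remains unproved in your write-up.
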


\begin{proof} Consider the canonical homomorphism $\Lambda:\mathrm{Aut}(K)\to\mathrm{Aut}(K/Z_2(K))$.
Here $K/Z_2(K)$ is a free module over $\Z/p^m\Z$ of rank 2, so $\Lambda$ gives rise to the homomorphism
$D:\mathrm{Aut}(K)\to (\Z/p^m\Z)^\times$, $\Psi\mapsto D_\Psi$, the determinant of $\Psi^\Lambda$.
Let $\Psi\in\mathrm{Aut}(K)$. Then
\begin{equation}
\label{pru}
a^\Psi=a^i b^j z,\; b^\Psi=a^e b^f w\quad i,j,e,f\in\N, z,w\in Z_2(K).
\end{equation}

Let $d$ be any integer that maps into $D_\Psi$ under $\Z\to \Z/p^m\Z$. Since $c\in Z_2(K)$,
we have
\begin{equation}
\label{det}
c^\Psi\equiv [a^i b^j z,a^e b^f w]\equiv c^{if-je}\equiv c^{d}\mod Z(K).
\end{equation}

As $\Psi$ preserves the relation $a^c=a^\alpha$, it follows from (\ref{det}) that
\begin{equation}
\label{compa2}
(a^i b^j z)^{c^d}=(a^i b^j z)^\alpha.
\end{equation}

We may assume without loss that $d$ is positive, so (\ref{ded}) yields
\begin{equation}
\label{comp2}
(a^i b^j z)^{c^d}=a^{i\alpha^d} b^{j\beta^d}z=a^i a^j z a^{i d p^m} b^{-j d p^m}.
\end{equation}

On the other hand,
\begin{equation}
\label{comp3}
(a^i b^j z)^{\alpha}=a^i b^j z(a^i b^j z)^{p^m}.
\end{equation}

From (\ref{compa2})-(\ref{comp3}) we derive
\begin{equation}
\label{coli}
(a^i b^j z)^{p^m}=a^{i d p^m} b^{-j d p^m}.
\end{equation}

Likewise, since $\Psi$ preserves the relation $b^c=b^\beta$, we have $(a^e b^f w)^{c^d}=(a^e b^f w)^\beta$, which leads
to $a^{e d p^m} b^{- f d p^m}=(a^e b^f w)^{-p^m}$, that is,
\begin{equation}
\label{coli2}
(a^e b^f w)^{p^m}=a^{-e d p^m} b^{f d p^m}.
\end{equation}

We next compute $(a^i b^j z)^{p^m}$ and $(a^e b^f w)^{p^m}$ directly, using only $i,j,e,f\in\N$ and $z,w\in Z_2(K)$.
As a first step, we show that
\begin{equation}
\label{powell}
(a^i b^j z)^{p^m}=(a^i b^j)^{p^m}.
\end{equation}
Indeed, we  have $z=a^{u p^m}b^{v p^m}c^k$, $u,v,k\in\N$,
and we set $h=c^k$, $\gamma=\alpha^k$, and $\delta=\beta^k$. Then
$$
\begin{aligned}
(a^i b^j z)^{p^m} & =h^{p^m}(a^i b^j)^{h^{p^m}}\cdots (a^i b^j)^{h^{2}}(a^i b^j)^{h}\\
&=(a^i)^{\gamma^{p^m}} (b^j)^{\delta^{p^m}}\cdots (a^i)^{\gamma^{2}} (b^j)^{\delta^{2}} (a^i)^{\gamma} (b^j)^{\delta}\\
&=(a^i b^j)^{p^m} (a^i)^{(\gamma-1)+(\gamma^2-1)+\cdots+(\gamma^{p^m}-1)}
(b^j)^{(\delta-1)+(\delta^2-1)+\cdots+(\delta^{p^m}-1)}\\
&=(a^i b^j)^{p^m},
\end{aligned}
$$
since $(\gamma^\ell-1)\equiv (\gamma-1)\ell\mod p^{m}$ for any $\ell\in\N$,
so
$$(\gamma-1)+(\gamma^2-1)+\cdots+(\gamma^{p^m}-1)\equiv (\gamma-1)(1+2+\cdots+p^m)\equiv 0\mod p^{2m},$$
and likewise $(\delta-1)+(\delta^2-1)+\cdots+(\delta^{p^m}-1)\equiv 0\mod p^{2m}$.

As a second step, we compute $(a^i b^j)^{p^m}$. We begin by
noting that
\begin{equation}
\label{coli3}
(a^i b^j)^{p^m}=a^{i{p^m}} (b^j)^{a^{(p^m-1)i}}\cdots (b^j)^{a^{2i}}(b^j)^{a^i}b^j.
\end{equation}

To deal with (\ref{coli3}), we appeal to Lemma \ref{basw} to see that for each $1\leq t\leq p^m-1$, we have
\begin{equation}
\label{coli4}
(b^j)^{a^{ti}}= a^{(\alpha-1)tij(ti-1)/2} c^{-tij}b^{(\alpha-1)tij(j+1)/2}b^j.
\end{equation}
Since each of the factors $a^{(\alpha-1)tij(ti-1)/2}, b^{(\alpha-1)tij(j+1)/2}$, $1\leq t\leq p^m-1$, appearing in (\ref{coli3})
is central in $K$, we can collect them as powers of $a$ and $b$, obtaining
\begin{equation}
\label{coli5}
b^{(\alpha-1)ij(j+1)[1+2+\cdots+p^{m}-1]/2}=1,
\end{equation}
\begin{equation}
\label{coli6}
a^{(\alpha-1)j[i(i-1)+2i(2i-1)+\cdots+(p^m-1)i((p^m-1)i-1)]/2}.
\end{equation}
Setting $n=p^m-1$, we claim that
\begin{equation}
\label{freezing}
[i(i-1)+2i(2i-1)+\cdots+ni(ni-1)]/2\equiv\begin{cases} 0\mod p^m & \text{ if }p\neq 3,\\
i^2 3^{m-1}\mod 3^m & \text{ if }p=3.
\end{cases}
\end{equation}
Indeed, we have
$$
i(i-1)+2i(2i-1)+\cdots+ni(ni-1)=-i(1+2+\cdots+n)+i^2(1^2+2^2+\cdots+n^2),
$$
where $(1+2+\cdots+n)\equiv 0\mod p^m$. On the other hand,
$$
1^2+2^2+\cdots+n^2=\frac{n(n+1)(2n+1)}{6},
$$
so $1^2+2^2+\cdots+n^2\equiv 0\mod p^m$ if $p\neq 3$, and
$1^2+2^2+\cdots+n^2\equiv 3^{m-1}(2\times 3^m-1)(3^m-1)/2\equiv -3^{m-1}\mod 3^m$ if $p=3$. This proves the claim.
It follows from (\ref{coli6}) and (\ref{freezing}) that
\begin{equation}
\label{coli7}
a^{(\alpha-1)j[i(i-1)+2i(2i-1)+\cdots+(p^m-1)i((p^m-1)i-1)]/2}=\begin{cases} 1\text{ if }p\neq 3,\\
a^{3^{2m-1} i^2 j}\text{ if }p=3.\end{cases}
\end{equation}

Regarding the right hand side of (\ref{coli3}), except for the leftmost factor $a^{i{p^m}}$
and the collected central factors (\ref{coli5}) and (\ref{coli7}), the remaining factor is equal to
\begin{equation}
\label{coli8}
W=(c^{ij} b^j )(c^{2ij} b^j)\cdots (c^{(p^m-1)ij} b^j) b^j,
\end{equation}
using (\ref{coli4}) and the fact that $c^{p^m}=1$. It follows that
\begin{equation}
\label{coli9}
W=(b^j)^{1+ij{{2}\choose{2}}p^m}(b^j)^{1+ij{{3}\choose{2}}p^m}\cdots (b^j)^{1+ij{{p^m}\choose{2}}p^m} c^{{p^m}\choose{2}} b^j.
\end{equation}
Since $c^{p^m}=1$, we see that
\begin{equation}
\label{coli10}
W=b^{jp^m}b^{ij^2p^m ({{2}\choose{2}}+{{3}\choose{2}}+\cdots+{{p^m}\choose{2}})}.
\end{equation}
Setting $N=p^m$,  we claim that
\begin{equation}
\label{freezing2}
{{2}\choose{2}}+{{3}\choose{2}}+\cdots+{{N}\choose{2}}
\equiv\begin{cases} 0\mod p^m & \text{ if }p\neq 3,\\
3^{m-1}\mod 3^m & \text{ if }p=3
\end{cases}
\end{equation}
Indeed, we have
$$
2({{2}\choose{2}}+{{3}\choose{2}}+\cdots+{{N}\choose{2}})=(2-1)\times 2+(3-1)\times 3+\cdots (N-1)N=
(1^2+2^2+\cdots+N^2)-(1+2+\cdots+N).
$$
Here $1+2+\cdots+N\equiv 0\mod p^m$ and $1^2+2^2+\cdots+N^2=N(N+1)(2N+1)/6$, and therefore
$1^2+2^2+\cdots+N^2\equiv 0\mod p^m$ if $p\neq 3$, and
$1^2+2^2+\cdots+N^2\equiv 3^{m-1}(2\times 3^m+1)(3^m+1)/2\equiv -3^{m-1}\mod p^m$ if $p=3$. This proves the claim.
It follows from (\ref{coli10}) and (\ref{freezing2}) that
\begin{equation}
\label{coli11}
W=b^{ij^2p^m ({{2}\choose{2}}+{{3}\choose{2}}+\cdots+{{p^m}\choose{2}})}=\begin{cases} 1\text{ if }p\neq 3,\\
b^{3^{2m-1} i j^2}\text{ if }p=3.\end{cases}
\end{equation}

Combining (\ref{powell})-(\ref{coli11}), we obtain
\begin{equation}
\label{coli12}
(a^i b^j z)^{p^m}=\begin{cases} a^{ip^m}b^{jp^m}\text{ if }p\neq 3,\\
a^{ip^m}b^{jp^m} a^{3^{2m-1} i^2 j} b^{3^{2m-1} i j^2}\text{ if }p=3.\end{cases}.
\end{equation}

We may also assume without loss that $e$ is even, and, by above, we have
\begin{equation}
\label{coli13}
(a^e b^f w)^{p^m}=\begin{cases} a^{ep^m}b^{fp^m}\text{ if }p\neq 3,\\
a^{ep^m}b^{fp^m} a^{3^{2m-1} e^2 f} b^{3^{2m-1} e f^2}\text{ if }p=3.\end{cases}
\end{equation}

Suppose first that $p\neq 3$. Then (\ref{coli}), (\ref{coli2}), (\ref{coli12}), and (\ref{coli13}) give
\begin{equation}
\label{coli14}
i(d-1)\equiv 0,\, j(d+1)\equiv 0,\, e(d+1)\equiv 0,\, f(d-1)\equiv 0\quad \mod p^m.
\end{equation}
Since $d$ is invertible modulo $p^m$, at least one of $i,j$ must be invertible modulo $p^m$ by (\ref{det}).

If $p\nmid i$ then (\ref{coli14}) yields $d\equiv 1\mod p^m$, whence $2j\equiv 0\mod p^m$, that
is, $j\equiv 0\mod p^m$, and therefore $1\equiv d\equiv if\mod p^m$ and
$2e\equiv 0\mod p^m$, so $e\equiv 0\mod p^m$. If
$p\nmid j$ then replacing $\Psi$ by $\mu\Psi$ the previous case yields $D_\Psi=-1$, $ej\equiv -1\mod p^m$,
and $i,f\equiv 0\mod p^m$.

Suppose next that $p=3$. Then (\ref{coli}), (\ref{coli2}), (\ref{coli12}), and (\ref{coli13})
yield
\begin{equation}
\label{coli15}
i(d-1)-3^{m-1} i^2 j \equiv 0\mod 3^{m},
\end{equation}
\begin{equation}
\label{coli16}
j(d+1)+3^{m-1} i j^2 \equiv 0\mod 3^{m},
\end{equation}
\begin{equation}
\label{coli17}
e(d+1)+3^{m-1} e^2 f \equiv 0\mod 3^{m},
\end{equation}
\begin{equation}
\label{coli18}
f(d-1)-3^{m-1} e f^2 \equiv 0\mod 3^{m}.
\end{equation}

Again, since $3\nmid d$, at least one of $i,j$ must be invertible modulo $3^m$.

Assume first that $3\nmid i$.
Then (\ref{coli15}) yields $d\equiv 1\mod 3^{m-1}$ hence  (\ref{coli16}) ensures that
$j\equiv 0\mod 3^{m-1}$. We continue the argument
assuming $m>1$. We go back to  (\ref{coli15}) and use $j\equiv 0\mod 3^{m-1}$ to obtain
$d\equiv 1\mod 3^m$. This, $j\equiv 0\mod 3^{m-1}$, and (\ref{coli16}) now give $j\equiv 0\mod 3^{m}$.
Next $d\equiv 1\mod 3^m$ and  $j\equiv 0\mod 3^{m}$ yield $if\equiv 1\mod 3^m$. In particular, $3\nmid f$. Also
$d\equiv 1\mod 3^m$ and
(\ref{coli17}) give $e\equiv 0\mod 3^{m-1}$,
which substituted back in (\ref{coli17}) forces $e\equiv 0\mod 3^{m}$. We proceed next assuming $m=1$. In this
case $d\equiv \pm 1\mod 3$ for lack of any other options. We are still assuming $3\nmid i$. Suppose, if possible,
that $d\equiv -1\mod 3$. Then (\ref{coli16}) forces $3|j$ and then $d\equiv 1\mod 3$ by (\ref{coli15}), a contradiction.
Thus $d\equiv 1\mod 3$, so
(\ref{coli15}) implies $3\mid j$, and therefore
$if\equiv 1\mod 3$, whence (\ref{coli18}) yields $3\mid e$.

Suppose next $3\nmid j$. Replacing $\Psi$ by $\mu\Psi$ the above case yields $3\nmid e$, $3^m|i$, $3^m|f$, and
$D_\Psi=-1$.

We have shown that $D_\Psi=\pm 1$, and that if $D_\Psi=1$ then
$if\equiv 1\mod p^m$ and $j,e\equiv 0\mod p^m$,
and if $D_\Psi=-1$ then $je\equiv -1\mod p^m$ and $i,f\equiv 0\mod p^m$.

We fix the $\Z/p^{m}\Z$-basis $\{a Z_2(K), bZ_2(K)\}$ of $K/Z_2(K)$, and identify $\mathrm{Aut}(K/Z_2(K))$
with $\GL_2(\Z/p^m\Z)$. We then have the following matrices:
$$
M_r=f_r^\Lambda=
\left(\begin{array}{cc}
[r] & 0\\ 0 & [s]
\end{array}\right),\,
Q=\mu^\Lambda=
\left(\begin{array}{cc}
0 & 1\\ 1 & 0
\end{array}\right),
$$
where $r\in\Z$, is relatively prime to $p$ with inverse $s$ modulo $p^{2m}$, and $t\mapsto [t]$ is the canonical projection
$\Z\to\Z/p^{m}\Z$. Set $U=\{M_r\,|\, r\in\Z, r\text{ prime to }p\}$ and $V=U\rtimes\langle Q\rangle$. Then $U$
is cyclic of order $\varphi(p^m)=p^{m-1}(p-1)$ and $V$
is a dihedral group of order $2p^{m-1}(p-1)$.

If $D_\Psi=1$ then $if\equiv 1\mod p^m$ and $j,e\equiv 0\mod p^m$, which makes it obvious that
$\Psi^\Lambda\in U$, so if $D_\Psi=-1$ then
$\Psi^\Lambda\in V$. This proves that the image of $\Lambda$ is $V=(S\langle \mu\rangle)^\Lambda$. Since the kernel
of $\Lambda$ is, by definition, $\mathrm{Aut}_2(K)$, it follows that
$\mathrm{Aut}(K)=\mathrm{Aut}_2(K)S\langle \mu\rangle$. As $|\mathrm{Aut}_2(K)|=|Z_2(K)|^2=p^{6m}$
by Proposition \ref{autk2}, and $|V|=2p^{m-1}(p-1)$ by above, we deduce $|\mathrm{Aut}(K)|=2p^{7m-1}(p-1)$.
\end{proof}

\begin{cor} Every automorphism of $K$ has determinant $\pm 1$ when acting on $K/Z_2(K)$,
and $\mathrm{Aut}_2(K)S$ is the kernel of the corresponding determinant map, as well as the
pointwise stabilizer of $c Z(K)$ when $\mathrm{Aut}(K)$ acts on $K/Z(K)$.
\end{cor}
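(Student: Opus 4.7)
The plan is to derive all three assertions directly from Theorem \ref{autk6} and the computations carried out in its proof; no substantial new argument is needed. Assertion (1) is literally recorded inside that proof: every $\Psi\in\mathrm{Aut}(K)$ satisfies $D_\Psi=\pm 1$.

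For the second assertion, I would compute the determinant on each factor of the decomposition $\mathrm{Aut}(K)=\mathrm{Aut}_2(K)S\langle\mu\rangle$ furnished by Theorem \ref{autk6}. Elements of $\mathrm{Aut}_2(K)$ act as the identity on $K/Z_2(K)$ and hence have determinant $1$; each $f_r\in S$ has matrix $M_r=\mathrm{diag}([r],[s])$ with $rs\equiv 1\bmod p^{2m}$, so its determinant is $[rs]=1$ as well; and $\mu$ has matrix $Q$ of determinant $-1$. Combining these observations, the kernel of the determinant map is exactly $\mathrm{Aut}_2(K)S$.

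For the third assertion, the key input is the congruence $c^\Psi\equiv c^{D_\Psi}\bmod Z(K)$ already established as equation (\ref{det}) in the proof of Theorem \ref{autk6}. Since $cZ(K)$ has order $p^m$ in $K/Z(K)$, the condition that $\Psi$ fixes $cZ(K)$ translates to $c^{D_\Psi-1}\in Z(K)$, and the normal form of the elements of $K$ forces $D_\Psi\equiv 1\bmod p^m$; by assertion (2) this is equivalent to $\Psi\in\mathrm{Aut}_2(K)S$. I do not anticipate any real obstacle: the whole corollary is a bookkeeping exercise that collects consequences already implicit in the proof of the main theorem, the only small conceptual point being the use of (\ref{det}) to convert the stabilizer condition in $K/Z(K)$ into a condition on $D_\Psi$.
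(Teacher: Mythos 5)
Your proposal is correct and matches the paper's intent: the corollary is stated without a separate proof precisely because, as you observe, all three assertions are read off from the proof of Theorem \ref{autk6} — the computation $D_\Psi=\pm 1$, the unique factorization $gf_r\mu^j$ together with $\det(M_r)=[rs]=[1]$ and $\det(Q)=-1$, and congruence (\ref{det}) combined with $\langle c\rangle\cap Z(K)=1$ from the normal form. No gaps.
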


In terms of group extensions, we have the normal series
$$
1\subset \mathrm{Aut}_1(K)\subset \mathrm{Aut}_2(K)\subset \mathrm{Aut}(K).
$$
The first factor $\mathrm{Aut}_1(K)\cong (\Z/p^m\Z)^4$ by Proposition \ref{autk}. By Proposition \ref{autk2}, the second factor
$$
\mathrm{Aut}_2(K)/\mathrm{Aut}_1(K)\cong \mathrm{Aut}_1(K)\mathrm{Inn}(K)/\mathrm{Aut}_1(K)\cong
\mathrm{Inn}(K)/\mathrm{Inn}(K)\cap \mathrm{Aut}_1(K)\cong K\delta/Z_2(K)\delta
$$
is isomorphic to $(K/Z(K))/(Z_2(K)/Z(K))\cong K/Z_2(K)\cong (\Z/p^m\Z)^2$. The last factor
$$
\mathrm{Aut}(K)/\mathrm{Aut}_2(K)\cong D_{2p^{m-1}(p-1)}
$$
by Theorem \ref{autk6}.

Let $S_p$ be the Sylow $p$-subgroup of $S$. As $\mu$ and $S$ normalize $S_p$,
it follows that $\mathrm{Aut}_2(K)S_p$ is a normal subgroup of $\mathrm{Aut}(K)$ of order $p^{7m-1}$, and is therefore the sole
Sylow $p$-subgroup of $\mathrm{Aut}(K)$. We next find a presentation for $\mathrm{Aut}_2(K)S_p$.

In terms of the notation of Proposition \ref{autk}, we set
\begin{equation}\label{pu2}
x=(a^{p^m},a^{p^m})\Omega,\; y=(b^{p^m},b^{p^m})\Omega,\; z=(b^{p^m},a^{p^m})\Omega,\; u=(a^{\alpha-1},b^{1-\alpha})\Omega.
\end{equation}
In this notation, we have the internal direct product decomposition
\begin{equation}
\label{intpro}
\mathrm{Aut}_1(K)=\langle x\rangle\times\langle y \rangle \times
\langle z\rangle\times \langle u\rangle,
\end{equation}
where $u=c\delta$ generates $\mathrm{Aut}_1(K)\cap\mathrm{Inn}(K)$ by Proposition \ref{autk2}.

Recalling that $\mathrm{Aut}_1(K)$ and $\mathrm{Inn}(K)$ commute elementwise,
(\ref{pu2}), (\ref{intpro}), and Proposition \ref{autk2} yield the following in internal direct product decomposition:
\begin{equation}
\label{intpro2}
\mathrm{Aut}_2(K)=\mathrm{Aut}_1(K)\mathrm{Inn}(K)=\langle x\rangle\times\langle y \rangle \times
\langle z\rangle\times \mathrm{Inn}(K).
\end{equation}
Here $x,y,z$ generate a subgroup of $\mathrm{Aut}_2(K)$ isomorphic to $(\Z/p^m\Z)^3$
with defining relations
\begin{equation}\label{pelt2}
x^{p^m}=y^{p^m}=z^{p^m}=1, [x,y]=[x,z]=[y,z]=1,
\end{equation}
and
\begin{equation}\label{pu1}
F=a\delta, G=b\delta
\end{equation}
generate the subgroup $\mathrm{Inn}(K)\cong \mathrm{Heis}(\Z/p^m\Z)$ of $\mathrm{Aut}_2(K)$
subject to the defining relations
\begin{equation}\label{pelt1}
F^{[F,G]}=F, G^{[F,G]}=G, F^{p^{m}}=1, G^{p^{m}}=1,
\end{equation}
which imply the relation $[F,G]^{p^m}=1$. We may now use the decomposition (\ref{intpro2}) to see that $\mathrm{Aut}_2(K)$ is generated by
$x,y,z,F,G$
subject to the defining relations (\ref{pelt2}), (\ref{pelt1}) as well as
\begin{equation}\label{pelt3}
[x,F]=[x,G]=[y,F]=[y,G]=[z,F]=[z,G]=1.
\end{equation}

Now $\mathrm{Aut}_2(K)$ is a normal subgroup of $\mathrm{Aut}_2(K)S_p$
of order $p^{6m}$, and $S_p$ is generated by
\begin{equation}\label{pu3}
w=f_r,\quad r=1+p.
\end{equation}
The order of $w$ modulo $\mathrm{Aut}_2(K)$
is precisely $p^{m-1}$. We have
\begin{equation}\label{q1}
r^{p^{m-1}}\equiv 1+p^m d\mod p^{2m},\quad d\equiv 1\mod p^m.
\end{equation}
On the other hand,
\begin{equation}\label{q2}
\alpha=1+p^m \ell,\quad p\nmid \ell,
\end{equation}
and we select $e\in\Z$ such that
\begin{equation}\label{q3}
de\equiv \ell\mod p^m.
\end{equation}
Thus
$$
r^{p^{m-1}e}\equiv (1+p^m d)^e\equiv 1+p^m  de\equiv 1+p^m \ell\equiv \alpha\mod p^{2m},
$$
and therefore $(f_r)^{p^{m-1} e}=c\delta$, $c=[a,b]$, that is,
\begin{equation}\label{pelt4}
w^{p^{m-1} e}=[F,G].
\end{equation}
We next use the formula $(v\delta)^T=(v^T)\delta$ for $T\in\mathrm{Aut}(K)$ and $v\in K$, applied to
$T=w$ and $v\in \{a,b\}$, and obtain
\begin{equation}\label{pelt5}
F^w=F^{r}, G^w=G^{s},\quad rs\equiv 1\mod p^{m},
\end{equation}
where $s$ may taken to be even by adding $p^m$, if necessary. We then have
\begin{equation}\label{pelt6}
x^w=x, y^w=y, z^w=x^{(r^2-s^2)/2}y^{(s^2-r^2)/2}z^{(r^2+s^2)/2}[F,G]^{l(s^2-r^2)/2},\quad l\ell\equiv 1\mod p^m.
\end{equation}

We have proven the following result.

\begin{theorem} The group $\mathrm{Aut}_2(K)S_p$ is a normal
Sylow $p$-subgroup of $\mathrm{Aut}(K)$ of order $p^{7m-1}$ and index $2(p-1)$ in $\mathrm{Aut}(K)$. It is generated by
$F,G,x,y,z,w$, as defined in (\ref{pu2}), (\ref{pu1}), (\ref{pu3}), subject to the defining relations (\ref{pelt2}), (\ref{pelt1}),
(\ref{pelt3}), (\ref{pelt4}), (\ref{pelt5}),  (\ref{pelt6}), where $e$ is defined in  (\ref{pu3}),
(\ref{q1}), (\ref{q2}), (\ref{q3}).
\end{theorem}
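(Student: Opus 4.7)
The plan is to derive the structural claims (normality, Sylow, order, index) as immediate consequences of Theorem \ref{autk6}, and then verify the presentation in two stages: first checking that the listed relations hold in $\mathrm{Aut}_2(K)S_p$, then showing they are sufficient by bounding the order of the abstractly presented group.

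For the structural claims, the subgroup $S$ is cyclic of order $\varphi(p^{2m}) = p^{2m-1}(p-1)$, so $S_p$ is its characteristic cyclic Sylow $p$-subgroup of order $p^{2m-1}$. By Proposition \ref{autk4}, $\mathrm{Aut}_2(K)\cap S = \langle f_{1+p^m}\rangle$ has order $p^m$; since this is already a $p$-group we have $|\mathrm{Aut}_2(K)\cap S_p| = p^m$ and therefore $|\mathrm{Aut}_2(K)S_p| = p^{6m}\cdot p^{2m-1}/p^m = p^{7m-1}$, of index $2(p-1)$ in $\mathrm{Aut}(K)$. Normality follows because $\mathrm{Aut}_2(K)$ is normal in $\mathrm{Aut}(K)$, $S_p$ is characteristic in the cyclic group $S$, and $S$ is normalized by $\mu$ via (\ref{nors}); combined with $\mathrm{Aut}(K) = \mathrm{Aut}_2(K)S\langle\mu\rangle$ this makes $\mathrm{Aut}_2(K)S_p$ normal in $\mathrm{Aut}(K)$. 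Since its order equals the $p$-part of $|\mathrm{Aut}(K)|$, it is the unique Sylow $p$-subgroup.

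To verify the relations, (\ref{pelt2}) follows from the internal decomposition (\ref{intpro}) and the fact that $\Omega$ embeds $Z(K)\times Z(K)\cong(\Z/p^m\Z)^4$ (Proposition \ref{autk}); (\ref{pelt1}) uses $\mathrm{Inn}(K)\cong\mathrm{Heis}(\Z/p^m\Z)$; (\ref{pelt3}) reflects the elementwise commutativity of central and inner automorphisms noted in the introduction. Relation (\ref{pelt4}) has been derived in (\ref{q1})--(\ref{q3}). Relation (\ref{pelt5}) is immediate from the identity $(v\delta)^T = (v^T)\delta$ applied with $T = w$, $v\in\{a,b\}$. The most delicate verification is (\ref{pelt6}): using $x=(a^{p^m},a^{p^m})\Omega$, $y=(b^{p^m},b^{p^m})\Omega$, $z=(b^{p^m},a^{p^m})\Omega$, one conjugates these by $w=f_r$ and rewrites the resulting central automorphism in the basis $(x,y,z,[F,G])$ of $\mathrm{Aut}_1(K)$ supplied by (\ref{intpro}).

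For completeness, let $\tilde G$ be the group defined by the listed relations and set $N = \langle x,y,z,F,G\rangle\leq \tilde G$. Relations (\ref{pelt2}), (\ref{pelt1}), (\ref{pelt3}) force each element of $N$ into a normal form $x^{a_1}y^{a_2}z^{a_3}F^{a_4}G^{a_5}[F,G]^{a_6}$ with exponents mod $p^m$, yielding $|N|\leq p^{6m}$. Relations (\ref{pelt5}) and (\ref{pelt6}) make $N$ normal in $\tilde G$, while (\ref{pelt4}) places $w^{p^{m-1}e}$ in $N$; together with $\gcd(e,p)=1$ and the lower bound $|\tilde G|\geq p^{7m-1}$ coming from the natural surjection $\tilde G\twoheadrightarrow \mathrm{Aut}_2(K)S_p$, this pins down $|\tilde G/N| = p^{m-1}$ and hence $|\tilde G| = p^{7m-1}$, so the surjection is an isomorphism. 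The main obstacle I anticipate is the verification of (\ref{pelt6}), particularly the formula for $z^w$, whose correction term $[F,G]^{l(s^2-r^2)/2}$ arises from the non-trivial interplay between the central-automorphism component and $\mathrm{Inn}(K)$; isolating this term requires careful bookkeeping using Lemma \ref{basw} and the $\Omega$-basis of $\mathrm{Aut}_1(K)$.
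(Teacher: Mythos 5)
Your handling of the structural claims (normality, order $p^{7m-1}$, index $2(p-1)$, uniqueness as a Sylow $p$-subgroup) and your verification that the listed relations hold in $\mathrm{Aut}_2(K)S_p$ follow the paper's own route: the paper likewise gets the presentation of $\mathrm{Aut}_2(K)$ from the internal direct product decomposition (\ref{intpro2}) and then records the action of $w$ and the value of $w^{p^{m-1}e}$. Like the paper, you leave the computation behind (\ref{pelt6}) unperformed, merely flagging it as delicate; that is on par with the source.

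The genuine gap is in your sufficiency argument, at the step ``this pins down $|\tilde G/N|=p^{m-1}$.'' From the listed relations you only know that the order of $wN$ in $\tilde G/N$ divides $p^{m-1}e$ with $\gcd(e,p)=1$, while the surjection onto $\mathrm{Aut}_2(K)S_p$ only gives a lower bound of $p^{m-1}$ for that order; nothing excludes order $p^{m-1}f$ with $1<f\mid e$, and then your upper bound is $|\tilde G|\le p^{6m}\cdot p^{m-1}e$, which does not close the argument. The obstruction is in fact real: take $N_0=\mathrm{Aut}_2(K)$, let $\phi$ be the restriction to $N_0$ of conjugation by $w$, form $\Z\ltimes_\phi N_0$ with $\Z=\langle t\rangle$, and kill the element $t^{p^{m-1}e}[F,G]^{-1}$, which is central because $\phi^{p^{m-1}e}=\mathrm{id}$ on $N_0$ and $[F,G]=c\delta$ is central in $\mathrm{Aut}_2(K)$ and fixed by $\phi$. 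The quotient satisfies every relation in the list and has order $p^{7m-1}e$, so the abstractly presented group surjects onto it and cannot have order $p^{7m-1}$ once the chosen representative $e$ exceeds $1$ (which is unavoidable unless $\ell\equiv 1\bmod p^m$). What is missing is a relation expressing $w^{p^{m-1}}$ itself, not merely $w^{p^{m-1}e}$, as an explicit word in $x,y,z,F,G$; such a word exists, since $w^{p^{m-1}}=f_{1+p^m d}$ lies in $\mathrm{Aut}_2(K)\cap S=\langle f_{1+p^m}\rangle$ by Proposition \ref{autk4}. The paper supplies no sufficiency argument at all here, so your attempt to add one is the right instinct, but as written it both makes an unjustified inference and collides with this defect in the stated list of relations.
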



\section{The automorphism group of $J/Z(J)$}\label{autgmod56}

We set $H=J/Z(J)$ throughout this section. According to \cite[Theorem 8.1]{MS}, $J$ is nilpotent of class 5,
whence $H$ is nilpotent of class 4. By means of \cite[Theorem 5.3]{MS}, we see that $J$ has presentation $J=\langle X,Y\,|\,
X^{[X,Y]}=X^\alpha, Y^{[Y,X]}=Y^\alpha, X^{p^{3m}}=1=Y^{p^{3m}}\rangle$, and we set $Z=[X,Y]$.

By \cite[Theorem 8.1]{MS},
we have $Z(J)=\langle X^{p^{2m}}\rangle$, where $X^{p^{2m}}Y^{p^{2m}}=1$ as indicated in \cite[Section 6]{MS}.
We deduce from \cite[Theorem 5.1]{MS} that $H$ has presentation
$$
H=\langle A,B\,|\, A^{[A,B]}=A^\alpha, B^{[B,A]}=B^\alpha, A^{p^{2m}}=1, B^{p^{2m}}=1\rangle,
$$
and we set $C=[A,B]$. We have an automorphism $A\leftrightarrow B$, say $\nu$, of $H$.
The proof of \cite[Theorem 8.1]{MS} shows that $|H|=p^{6m}$, that all of
$A,B,C$ have order $p^{2m}$, and that
every element of $H$ can be written in one and only one way as product of elements taken from $\langle A\rangle,
\langle B\rangle$, and $\langle C\rangle$, in any fixed order. We have $Z_2(J)=\langle  X^{p^{2m}}, Z^{p^{m}}\rangle$,
$Z_3(J)=\langle  X^{p^{m}}, Y^{p^{m}}, Z^{p^{m}}\rangle$, and
$Z_4(J)=\langle  X^{p^{m}}, Y^{p^{m}}, Z\rangle$ by \cite[Theorem 8.1]{MS}. Moreover, $Z_3(J)$ is abelian by
\cite[Proposition 8.1]{MS}. It follows that
$$Z(H)=\langle C^{p^{m}}\rangle, \;
Z_2(H)=\langle A^{p^{m}}, B^{p^{m}}, C^{p^{m}}\rangle,\; Z_3(H)=\langle A^{p^{m}}, B^{p^{m}}, C\rangle,$$
with $Z_2(H)$ abelian. It follows that $Z(H)\cong \Z/p^m\Z$ and $Z_2(H)\cong (\Z/p^m\Z)^3$.
The isomorphism $J/Z_3(J)\cong \mathrm{Heis}(\Z/p^m\Z)$ yields $H/Z_3(H)\cong J/Z_4(J)\cong (\Z/p^m\Z)^2$,
whence $|Z_3(H)|=p^{4m}$.
Also, $\mathrm{Inn}(H)\cong H/Z(H)\cong J/Z_2(J)\cong K$, a group of order $p^{5m}$, where $K$ is defined in Section \ref{autgmod2}.

\begin{prop}\label{auth} For every $x,y\in Z_2(H)$ the assignment
$$
A\mapsto Ax,\; B\mapsto By
$$
extends to a 2-central automorphism $(x,y)\Pi$ of $H$ that fixes $Z_2(H)$ pointwise. Moreover, the corresponding map
$\Pi:Z_2(H)\times Z_2(H)\to \mathrm{Aut}(H)$ is a group monomorphism whose image is $\mathrm{Aut}_2(H)$. In particular,
$|\mathrm{Aut}_2(H)|=p^{6m}$.
\end{prop}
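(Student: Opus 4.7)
The plan is to imitate the proof of Proposition \ref{autk} while paying careful attention to the fact that $x, y$ now lie in $Z_2(H)$, not $Z(H)$. The argument decomposes into three steps: (i) verify that the assignment $A \mapsto Ax$, $B \mapsto By$ respects each of the four defining relations of $H$, yielding an endomorphism $(x,y)\Pi$; (ii) observe that $(x,y)\Pi$ fixes $Z_2(H)$ pointwise, which immediately yields bijectivity, multiplicativity of $\Pi$, and injectivity; (iii) match the image with $\mathrm{Aut}_2(H)$ by noting that any element of $\mathrm{Aut}_2(H)$ sends $A, B$ to $Ax, By$ with $x, y \in Z_2(H)$ by definition.

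The heart of step (i) is the commutator calculation. Since $x, y \in Z_2(H)$ implies $[H, x], [H, y] \subseteq Z(H)$ and $Z_2(H)$ is abelian (so $[x,y]=1$), the identities (\ref{comfor}) produce $[Ax, By] = C \cdot u$ for some $u \in Z(H)$, provided one also knows that $Z_2(H) \subseteq C_H(C)$. This containment is elementary but essential: each generator $A^{p^m}, B^{p^m}, C^{p^m}$ of $Z_2(H)$ commutes with $C$, since for instance $C^{-1} A^{p^m} C = A^{\alpha p^m} = A^{p^m}$ using $\alpha p^m \equiv p^m \pmod{p^{2m}}$, and symmetrically for $B^{p^m}$. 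Granted this, $(Ax)^{Cu} = (Ax)^C = A^\alpha x$, while the standard central-product formula $(Ax)^n = A^n x^n [x,A]^{n(n-1)/2}$ (valid because $[x,A] \in Z(H)$) gives $(Ax)^\alpha = A^\alpha x^\alpha [x,A]^{\alpha(\alpha-1)/2} = A^\alpha x$: the trailing factor vanishes because $x^{p^m} = 1$, $\alpha - 1 = p^m \ell$, and $\alpha(\alpha - 1)/2$ is a multiple of $p^m$ for $p$ odd. The same power formula with $n = p^{2m}$ settles $(Ax)^{p^{2m}} = 1$, and the $B$-relation is symmetric.

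The remaining steps go through as in Proposition \ref{autk}. That $(x,y)\Pi$ fixes $Z_2(H)$ pointwise follows from the same power formula applied to $A^{p^m}$ and $B^{p^m}$, together with $(Cu)^{p^m} = C^{p^m}$ since $u$ is central of exponent $p^m$. With this fixedness in hand, $(x,y)\Pi \circ (x',y')\Pi$ sends $A$ to $A x x'$ and $B$ to $B y y'$, so $\Pi$ is a homomorphism; since $(1,1)\Pi$ is the identity, each $(x,y)\Pi$ is an automorphism, and injectivity is transparent. The order count $|\mathrm{Aut}_2(H)| = |Z_2(H)|^2 = p^{6m}$ then follows. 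The main obstacle I anticipate is the commutator identity $[Ax, By] = Cu$, but this hinges only on the observation $[C, Z_2(H)] = 1$, which is special to $H$ and encodes how little the second center perturbs conjugation by $C$.
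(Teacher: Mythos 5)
Your proof is correct and follows the same architecture as the paper's: verify the four defining relations, show that $Z_2(H)$ is fixed pointwise, and then conclude exactly as in Proposition \ref{autk}; the two key ingredients you isolate, namely $[C,Z_2(H)]=1$ and $(Au)^{p^m}=A^{p^m}$ for $u\in Z_2(H)$, are precisely the paper's (\ref{yug}) and (\ref{guy}). The only difference is in technique: where the paper derives $(Bu)^{p^m}=B^{p^m}$ by expanding the product of conjugates explicitly for $u=A^{ip^m}B^{jp^m}C^{kp^m}$, you obtain the same identities more cleanly from the formula $(Ax)^n=A^nx^n[x,A]^{n(n-1)/2}$ (valid since $[x,A]\in Z(H)$) together with the observation that $p^m$ divides $n(n-1)/2$ for the relevant exponents $n$, which is a legitimate and slightly shorter route.
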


\begin{proof}  We begin by showing that
\begin{equation}
\label{coz2}
(Bu)^{[B,A]}=(Bu)^\alpha,\; (Au)^{[A,B]}=(Au)^\alpha, \;(Bu)^{p^{2m}}=1=(Au)^{p^{2m}},\quad u\in Z_2(H).
\end{equation}

We have
$$
B^{A^{p^{m}}}=BA^{(\alpha-1)(\alpha+2\alpha+\cdots+(p^{m}-1)\alpha^{p^{m}-1})}C^{-p^{m}},
$$
where $\alpha+2\alpha+\cdots+(p^{m}-1)\alpha^{p^{m} -1}\equiv 1+2+\cdots+(p^{m}-1)\equiv 0\mod p^{m}$.
Thus
\begin{equation}
\label{coz3}
B^{A^{p^{m}}}=BC^{-p^{m}}.
\end{equation}
Making use of $C^{p^{m}}\in Z(H)$, we see from (\ref{coz3}) that
\begin{equation}
\label{coz4}
(B^{p^{m}})^{A^{p^{m}}}=(BC^{-p^{m}})^{p^m}=B^{p^{m}},
\end{equation}
\begin{equation}
\label{coz7}
B^{A^{tp^{m}}}=B C^{-tp^{m}},\quad t\in\N.
\end{equation}
We infer from (\ref{coz4}) that $A^{p^{m}}$ and $B^{p^{m}}$ commute with each other,
in agreement with \cite[Proposition 8.1]{MS}, which ensures that $Z_3(J)$, and hence $Z_2(H)$, is abelian. Moreover,
from $\alpha\equiv 1\mod p^m$, we see directly that
\begin{equation}
\label{yug}
[A^{p^m},C]=1=[B^{p^m},C].
\end{equation}

Let $u\in Z_2(H)$. Then $u=A^{ip^{m}}B^{jp^{m}}C^{kp^{m}}$, where $i,j,k\in\N$. As $C^{p^{m}}\in Z(H)$ and
$B^{p^m}$ commutes with $B$ and $A^{p^{m}}$, we see that
\begin{equation}
\label{coz5}
(Bu)^{p^{m}}=(B A^{ip^{m}})^{p^{m}} (B^{jp^{m}}C^{kp^{m}})^{p^{m}}=(B A^{ip^{m}})^{p^{m}}=
A^{i p^{2m}} B^{A^{i p^{2m}}}\cdots B^{A^{2i p^{m}}} B^{A^{i p^{m}}}.
\end{equation}

Appealing to (\ref{coz7}) and (\ref{coz5}), and making use of $C^{p^{m}}\in Z(H)$, we deduce
\begin{equation}
\label{coz8}
(Bu)^{p^{m}}=
B^{p^{m}}C^{-i(1+2+\cdots+p^{m})p^{m}}=B^{p^{m}}.
\end{equation}
Applying the automorphism $A\leftrightarrow B$ of $H$, we infer from (\ref{coz8}) that
\begin{equation}
\label{guy}
(Au)^{p^{m}}=A^{p^{m}}.
\end{equation}

Now $\alpha=1+\ell p^m$ for some $\ell\in\Z$, so (\ref{coz4}), (\ref{yug}), and (\ref{coz8}) yield
\begin{equation}
\label{coz9}
(Bu)^\alpha=Bu(Bu)^{\alpha-1}=BuB^{\ell p^{m}}=B^\alpha u.
\end{equation}

On the other hand, (\ref{yug}) ensures that
\begin{equation}
\label{coz10}
(Bu)^{[B,A]}=B^{[B,A]}u=B^\alpha u.
\end{equation}

We deduce from (\ref{coz9}) and (\ref{coz10}) that $(Bu)^{[B,A]}=(Bu)^\alpha$.
Moreover, (\ref{coz8}) gives $(Bu)^{p^{2m}}=1$. The automorphism $A\leftrightarrow B$ now yields
$(Au)^{[A,B]}=A^\alpha u$ and $(Au)^{p^{2m}}=1$.
This proves that (\ref{coz2}) holds.

Next let $x,y\in Z_2(H)$. Then working modulo $Z(H)$ and making use of (\ref{comfor}), we obtain
\begin{equation}
\label{coz}
[Ax,By]=[A,B]z
\end{equation}
for a unique $z\in Z(H)$. We see from (\ref{coz2}) and (\ref{coz}) that
$$
(By)^{[By,Ax]}=(By)^\alpha,\; (Ax)^{[Ax,By]}=(Ax)^\alpha.
$$
Thus the first two defining relations of $H$ are preserved, and (\ref{coz2}) ensures that the third and fourth
are also preserved.
This yields an endomorphism $(x,y)\Pi$ of $H$. We see from (\ref{coz8}) and (\ref{guy}) that $(x,y)\Pi$
fixes $A^{p^m}$ and $B^{p^m}$. Moreover,
$$
C^{p^{m}}\mapsto (Cz)^{p^{m}}=C^{p^{m}}z^{p^{m}}=C^{p^{m}},
$$
because $z\in\langle Z(H)\rangle$, a group of exponent $p^{m}$.
Thus $(x,y)\Pi$ fixes $Z_2(H)$ pointwise. We may now continue the argument used in
the proof of Proposition \ref{autk}.
\end{proof}

\begin{prop}\label{auth2} We have $\mathrm{Inn}(H)\cap\mathrm{Aut}_2(H)=
\langle C\delta, A^{p^m}\delta, B^{p^m}\delta\rangle$, $|\mathrm{Inn}(H)\cap\mathrm{Aut}_2(H)|=p^{3m}$,
$$
\mathrm{Inn}(H)\mathrm{Aut}_2(H)=\mathrm{Aut}_3(H),\; |\mathrm{Aut}_3(H)|=p^{8m}.
$$
Given any $u,v\in Z_3(H)$, the assignment $A\mapsto Au,B\mapsto Bv$ extends to a 3-central automorphism of $H$.
\end{prop}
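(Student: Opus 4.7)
The plan is to derive all three assertions from a counting argument built on Proposition~\ref{auth} and the general identity $\mathrm{Inn}(T)\cap\mathrm{Aut}_i(T) = Z_{i+1}(T)\delta \cong Z_{i+1}(T)/Z(T)$ recorded in the introduction. Applied with $T=H$ and $i=2$, this identity gives $\mathrm{Inn}(H)\cap\mathrm{Aut}_2(H) = Z_3(H)\delta$. Using the descriptions $Z(H) = \langle C^{p^m}\rangle$ and $Z_3(H) = \langle A^{p^m}, B^{p^m}, C\rangle$ from the opening of this section, together with the normal form of elements of $H$, it is immediate that $|Z_3(H)/Z(H)| = p^{3m}$ with the cosets of $A^{p^m}, B^{p^m}, C$ as generators, settling the first claim.

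For the remaining claims I would first note that $\mathrm{Inn}(H)\subseteq\mathrm{Aut}_3(H)$, since $H$ has class~$4$, so $Z_4(H)=H$ and hence every inner automorphism is $3$-central. Combined with $\mathrm{Aut}_2(H)\subseteq\mathrm{Aut}_3(H)$, the product formula together with $|\mathrm{Inn}(H)| = p^{5m}$ and $|\mathrm{Aut}_2(H)|=p^{6m}$ (from Proposition~\ref{auth}) yields
$$|\mathrm{Inn}(H)\mathrm{Aut}_2(H)| = \frac{p^{5m}\cdot p^{6m}}{p^{3m}} = p^{8m}.$$
For the matching upper bound, I would observe that any $\phi\in\mathrm{Aut}_3(H)$ is determined by $A^\phi\in AZ_3(H)$ and $B^\phi\in BZ_3(H)$, leaving at most $|Z_3(H)|^2 = p^{8m}$ possibilities, whence $|\mathrm{Aut}_3(H)| \le p^{8m}$. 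The two bounds force $\mathrm{Aut}_3(H) = \mathrm{Inn}(H)\mathrm{Aut}_2(H)$ of order $p^{8m}$.

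The final assertion then follows by pigeonhole: the map $\phi\mapsto (A^{-1}A^\phi, B^{-1}B^\phi)$ from $\mathrm{Aut}_3(H)$ to $Z_3(H)\times Z_3(H)$ is injective (since $A, B$ generate $H$), and by the count the two sides have the same size $p^{8m}$, so the map is a bijection. In particular every pair $(u,v)\in Z_3(H)\times Z_3(H)$ is realized by a $3$-central automorphism $A\mapsto Au$, $B\mapsto Bv$. There is no genuine obstacle in the proof: the only nontrivial input is the count $|\mathrm{Aut}_2(H)|=p^{6m}$ supplied by Proposition~\ref{auth}, and everything else is structural bookkeeping with the upper central series of $H$ and the normal form of its elements.
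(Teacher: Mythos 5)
Your proposal is correct and follows essentially the same route as the paper: compute $\mathrm{Inn}(H)\cap\mathrm{Aut}_2(H)=Z_3(H)\delta$ of order $p^{3m}$, get $|\mathrm{Inn}(H)\mathrm{Aut}_2(H)|=p^{8m}$ from the product formula, match it against the upper bound $|\mathrm{Aut}_3(H)|\le|Z_3(H)|^2=p^{8m}$ coming from the action on the generators $A,B$, and conclude equality plus the surjectivity onto $Z_3(H)\times Z_3(H)$ by counting. The only cosmetic difference is that you deduce $\mathrm{Inn}(H)\subseteq\mathrm{Aut}_3(H)$ from $Z_4(H)=H$ rather than from the explicit relations $A^B=AC$, $B^A=BC^{-1}$ as the paper does.
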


\begin{proof} As $Z_3(H)/Z(H)$ has order $p^{3m}$ and is generated by the cosets of $C,A^{p^{m}}, B^{p^{m}}$,
it follows that
$\mathrm{Inn}(H)\cap\mathrm{Aut}_2(H)=\langle C\delta,A^{p^{m}}\delta, B^{p^{m}}\delta\rangle$ has order $p^{3m}$.

We deduce from Proposition \ref{auth} that $|\mathrm{Inn}(H)\mathrm{Aut}_2(H)|=p^{8m}$.
On the other hand, it is obvious  that $\mathrm{Aut}_2(H)$ is included in $\mathrm{Aut}_3(H)$.
As $B^A=BC^{-1}$, $A^B=AC$, with $C\in Z_3(H)$, it follows that $\mathrm{Inn}(H)$ is also included in $\mathrm{Aut}_3(H)$.
The very definition of $\mathrm{Aut}_3(H)$ forces $|\mathrm{Aut}_3(H)|\leq |Z_3(H)|\times |Z_3(H)|=p^{8m}$,
so $\mathrm{Aut}_3(H)=\mathrm{Inn}(H)\mathrm{Aut}_2(H)$ has order $p^{8m}$ and every possible pair
$(u,v)\in Z_3(H)\times Z_3(H)$ gives rise to a 3-central automorphism $A\mapsto Au,B\mapsto Bv$ of $H$.
\end{proof}

As $\nu$ has order 2, we see that that $\nu$ is not in the $p$-group $\mathrm{Aut}_3(H)$. But $\nu$ normalizes $\mathrm{Aut}_3(H)$,
so $\mathrm{Aut}_3(H)\langle \nu\rangle$ is a group of order $2p^{8m}$.

\begin{prop}\label{tecnico} Let $r$, $s$, and $\ell$ be integers relatively prime to $p$ such that $rs\equiv 1\mod p^{2m}$,
and let $\beta$ be an integer such that $\beta\equiv\alpha^\ell\mod p^{2m}$. Suppose that there exist $z_1,z_2\in Z(H)$ such that
\begin{equation}
\label{pli}
(A^r z_1)^{[A^r z_1,B^{s\ell} z_2]}=(A^r z_1)^\beta,\; (B^{s\ell} z_2)^{[B^{s\ell} z_2,A^r z_1]}=(B^{s\ell} z_2)^\beta.
\end{equation}
Then $r\equiv 1\mod p^m$ and $\ell\equiv 1\mod p^m$.
\end{prop}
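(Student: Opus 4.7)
The plan is to eliminate the central ``fudge factors'' $z_1,z_2$ to reduce (\ref{pli}) to pure conjugation identities in $H$, and then extract arithmetic information by computing $W:=[A^r,B^{s\ell}]$ modulo $Z(H)$ via Lemma \ref{basw}.

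To begin, I observe that $Z(H)=\langle C^{p^m}\rangle$ has exponent $p^m$ and $\beta\equiv\alpha^\ell\equiv 1\pmod{p^m}$, so $z_i^\beta=z_i$; combined with $z_i\in Z(H)$ and (\ref{comfor}), the hypotheses simplify to $(A^r)^W=A^{r\beta}$ and $(B^{s\ell})^{W^{-1}}=B^{s\ell\beta}$. Since $H/Z_2(H)\cong\mathrm{Heis}(\Z/p^m\Z)$, one has $W\equiv C^{rs\ell}\equiv C^\ell\pmod{Z_2(H)}$, so we write $W=C^\ell V$ with $V\in Z_2(H)$. Using $A^{C^\ell}=A^{\alpha^\ell}=A^\beta$ and the fact that $[A,V]\in Z(H)$ is central (as $V\in Z_2(H)$), the first relation unpacks as $V^{-1}A^{r\beta}V=A^{r\beta}[A,V]^{r\beta}=A^{r\beta}$, forcing $[A,V]^{r\beta}=1$, and hence $[A,V]=1$ since $r\beta$ is coprime to $p$ while $[A,V]$ has order dividing $p^m$. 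A symmetric manipulation of the second relation (using $W^{-1}=V^{-1}C^{-\ell}$ and $B^{C^{-\ell}}=B^{\alpha^\ell}=B^\beta$) yields $[B,V]=1$, and since $H=\langle A,B\rangle$ we conclude $V\in Z(H)$; equivalently, $W\equiv C^\ell\pmod{Z(H)}$.

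It remains to translate this into divisibility conditions on $r$ and $\ell$. To do so, I compute $[a^r,b^{s\ell}]$ in $K=H/Z(H)$ using Lemma \ref{basw}: after collecting central factors (the subgroup $Z_2(K)$ being abelian and containing all the ``correction'' terms) one obtains
\[
[a^i,b^j]=a^{-(\alpha-1)ij(i-1)/2}\,b^{(\alpha-1)ij(j-1)/2}\,c^{ij}.
\]
Specializing $(i,j)=(r,s\ell)$, replacing $rs\ell$ by $\ell$ via $rs\equiv 1\pmod{p^{2m}}$, and lifting to $H$ modulo $Z(H)$ yields
\[
W\equiv C^\ell\cdot A^{-(\alpha-1)\ell(r-1)/2}\cdot B^{(\alpha-1)\ell(s\ell-1)/2}\pmod{Z(H)}.
\]
Since $\langle A^{p^m},B^{p^m}\rangle\cap Z(H)=1$ by the normal form of the elements of $H$, the condition $W\equiv C^\ell\pmod{Z(H)}$ demands that both the $A$- and $B$-exponents vanish modulo $p^{2m}$. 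Using $v_p(\alpha-1)=m$ and the invertibility of $2$ modulo $p^m$, this yields $r\equiv 1\pmod{p^m}$ and $s\ell\equiv 1\pmod{p^m}$. Combined with $rs\equiv 1\pmod{p^m}$, the first forces $s\equiv 1\pmod{p^m}$, and then $\ell\equiv s\ell\equiv 1\pmod{p^m}$, as required.

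The main obstacle is the explicit commutator evaluation of $[a^r,b^{s\ell}]$ in $K$ to sufficient precision to expose the $A^{p^m}$ and $B^{p^m}$ contributions, which requires careful bookkeeping with Lemma \ref{basw} and full exploitation of the abelianity of $Z_2(K)$. Once that formula is in hand, the derivation of the divisibilities is purely arithmetic, resting on the rigid normal form of $H$ that keeps the $A^{p^m}$- and $B^{p^m}$-components independent of $Z(H)$.
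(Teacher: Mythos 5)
Your proof is correct. It follows the same skeleton as the paper's argument --- strip off the central factors $z_1,z_2$, evaluate $W=[A^r,B^{s\ell}]$ via Lemma \ref{basw}, and let the normal form of $H$ force the $A$- and $B$-exponents to vanish --- but the middle step is handled differently, and in a way that is slightly more self-contained. The paper deduces from (\ref{pli}) that $WC^{-\ell}\in C_H(A^r)=\langle A,C^{p^m}\rangle$ and $[B^{s\ell},A^r]C^{\ell}\in\langle B,C^{p^m}\rangle$, quoting the explicit centralizers of $A$ and $B$ computed in the proof of \cite[Theorem 8.1]{MS}; you instead use the Heisenberg quotient $H/Z_2(H)$ to write $W=C^\ell V$ with $V\in Z_2(H)$, and then the identities $[A^{k},V]=[A,V]^{k}$ (valid because $[Z_2(H),H]\subseteq Z(H)$) together with the coprimality of $r\beta$ and $s\ell$ to $p$ to conclude $[A,V]=[B,V]=1$, hence $V\in Z(H)$. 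This buys you independence from the centralizer computations in \cite{MS}, at the cost of having to derive the closed form $[a^i,b^j]=a^{-(\alpha-1)ij(i-1)/2}b^{(\alpha-1)ij(j-1)/2}c^{ij}$ from Lemma \ref{basw}; that formula is correct (the extra term $b^{(\alpha-1)ij^2}$ coming from $c^{b^j}=b^{j(\alpha-1)}c$ is exactly what converts the exponent $-(j+1)/2$ of the lemma into $(j-1)/2$), and your endgame --- $r\equiv 1$ and $s\ell\equiv 1$ modulo $p^m$, hence $\ell\equiv 1$ --- matches the paper's, which reaches $j=-s\ell\equiv -1\bmod p^m$ by the same divisibility count. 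Both proofs are sound; yours trades one external input for a short internal commutator argument.
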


\begin{proof} Since $z_1,z_2\in Z(H)$, which is a group of exponent $p^{m}$, (\ref{pli}) yields
\begin{equation}
\label{tel0}
(A^r)^{[A^r,B^{s\ell}]}=(A^r)^\beta,\; (B^{s\ell})^{[B^{s\ell},A^r]}=(B^{s\ell})^\beta.
\end{equation}
By adding suitable multiples of $p^{2m}$ to $r$, $s$, and $\ell$, we may assume that all of them are odd,
$r,\ell\geq 1$, and $s<0$, so that $-s\ell>0$. Set $C=[A,B]$. Working modulo $Z(H)$ and taking
$i=r$ and $j=-s\ell$ in Lemma \ref{basw}, we obtain
\begin{equation}
\label{pok}
[A^r,B^{s\ell}]=A^{-(\alpha-1)\ell(r-1)/2} C^{\ell}B^{-(\alpha-1)\ell(j+1)/2}C^{kp^m},\quad k\in\Z.
\end{equation}
On the other hand, since $\beta\equiv\alpha^\ell\mod p^{2m}$, we have
\begin{equation}
\label{tel}
(A^r)^{C^\ell}=(A^r)^\beta,\; (B^{s\ell})^{C^{-\ell}}=(B^{s\ell})^\beta.
\end{equation}
The centralizers of $A$ and $B$ in $H$ were computed in the proof of \cite[Theorem 8.1]{MS}.
Thus, combining (\ref{tel0}) and (\ref{tel}), we deduce
$$
[A^r,B^{s\ell}]C^{-\ell}\in C_{H}(A^r)=C_{H}(A)=\langle A,C^{p^m}\rangle,
$$
$$
[B^{s\ell},A^r]C^{\ell}\in C_{H}(B^{s\ell})=C_{H}(B)=\langle B,C^{p^m}\rangle.
$$
These last two equations and (\ref{pok}) yield
$$
B^{-\alpha^\ell(\alpha-1)\ell(j+1)/2}=C^{\ell}B^{-(\alpha-1)\ell(j+1)/2}C^{-\ell}\in \langle A,C^{p^m}\rangle,
$$
$$
A^{\alpha^\ell(\alpha-1)\ell(r-1)/2}=C^{-\ell}A^{(\alpha-1)\ell(r-1)/2}C^{\ell}\in \langle B,C^{p^m}\rangle.
$$
The normal form of the elements of $H$ now forces $j\equiv -1\mod p^m$ and $r\equiv 1\mod p^m$. The latter
yields $s\equiv 1\mod p^m$ and since $j=-s\ell$, we deduce $\ell\equiv 1\mod p^m$.
\end{proof}

Recall that $K=H/Z(H)$, with presentation
$$
K=\langle u,v\,|\, u^{[u,v]}=u^\alpha, v^{[v,u]}=v^\alpha, u^{p^{2m}}=1, v^{p^{2m}}=1, [u,v]^{p^{m}}=1\rangle,
$$
and an automorphism $\mu$, defined by $u\leftrightarrow v$.
We have a group epimorphism $\pi:H\to K$ given by $A\mapsto u$, $B\mapsto v$. Since
$Z(H)$ is a characteristic subgroup of $H$, we see that $\pi$ gives
rise to a group homomorphism $\Lambda:\mathrm{Aut}(H)\to \mathrm{Aut}(K)$ such that $\pi g^\Lambda= g\pi$
for all $g\in \mathrm{Aut}(H)$.

\begin{prop}\label{tet} The map
$\Lambda:\mathrm{Aut}(H)\to \mathrm{Aut}(K)$ sends $\mathrm{Aut}_3(H)\langle\nu\rangle$ onto
$\mathrm{Aut}_2(K)\langle\mu\rangle$.
\end{prop}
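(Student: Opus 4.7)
The plan is to establish the two inclusions $\Lambda(\mathrm{Aut}_3(H)\langle\nu\rangle) \subseteq \mathrm{Aut}_2(K)\langle\mu\rangle$ and the reverse. The key structural observation I would exploit is the canonical identification $K/Z_2(K) \cong H/Z_3(H)$: since $K = H/Z(H)$, the standard computation of the upper central series of a central quotient gives $Z_i(K) = Z_{i+1}(H)/Z(H)$ for $i\ge 1$, and in particular $Z_2(K) = Z_3(H)/Z(H)$. Consequently $\pi$ carries $Z_3(H)$ onto $Z_2(K)$, and the induced action of $g^\Lambda$ on $K/Z_2(K)$ coincides with the action of $g$ on $H/Z_3(H)$.

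For the forward inclusion, I would first verify $\nu^\Lambda = \mu$ directly from the defining relation $\pi g^\Lambda = g\pi$: evaluating on the generators $A$ and $B$ gives $\nu^\Lambda(u) = \pi(\nu(A)) = \pi(B) = v$ and similarly $\nu^\Lambda(v) = u$, which characterizes $\mu$. Next, if $g \in \mathrm{Aut}_3(H)$ then $g$ acts trivially on $H/Z_3(H)$, and via the identification above this is equivalent to $g^\Lambda$ acting trivially on $K/Z_2(K)$, so $g^\Lambda \in \mathrm{Aut}_2(K)$. Together with the homomorphism property of $\Lambda$, these two facts yield the inclusion.

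For surjectivity, it suffices to exhibit preimages of $\mu$ and of every element of $\mathrm{Aut}_2(K)$. The element $\nu$ is a preimage of $\mu$. By Proposition \ref{autk2} (applied with its generators $a,b$ replaced by the present $u,v$), every element of $\mathrm{Aut}_2(K)$ has the form $u \mapsto us,\ v \mapsto vt$ for some $s,t \in Z_2(K)$. Using the surjection $\pi\colon Z_3(H)\to Z_2(K)$ noted above, pick lifts $\tilde s,\tilde t \in Z_3(H)$. By Proposition \ref{auth2}, the assignment $A \mapsto A\tilde s,\ B \mapsto B\tilde t$ extends to a 3-central automorphism $h$ of $H$, and applying $\pi h^\Lambda = h\pi$ to $A$ and $B$ gives $h^\Lambda(u)=us$ and $h^\Lambda(v)=vt$, as required.

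The entire argument is a diagram chase. There is no serious obstacle: the nontrivial ingredient (lifting central-type automorphisms of $K$ to $H$) has already been established in Proposition \ref{auth2}, and everything else is bookkeeping with the identifications $\nu^\Lambda=\mu$ and $Z_2(K)=Z_3(H)/Z(H)$.
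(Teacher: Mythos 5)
Your proof is correct, and it reaches the conclusion by a slightly different route than the paper. The paper's own argument is a factor-by-factor computation: it uses the decompositions $\mathrm{Aut}_3(H)=\mathrm{Inn}(H)\mathrm{Aut}_2(H)$ and $\mathrm{Aut}_2(K)=\mathrm{Inn}(K)\mathrm{Aut}_1(K)$ from Propositions \ref{auth2} and \ref{autk2}, observes that $\Lambda$ carries $\mathrm{Aut}_2(H)$ onto $\mathrm{Aut}_1(K)$ (via Proposition \ref{auth}) and $\mathrm{Inn}(H)$ onto $\mathrm{Inn}(K)$, and then appends $\nu\mapsto\mu$. You instead work elementwise: you use the identification $Z_2(K)=Z_3(H)/Z(H)$ (so $\pi$ maps $Z_3(H)$ onto $Z_2(K)$) both to get the inclusion $\Lambda(\mathrm{Aut}_3(H))\subseteq\mathrm{Aut}_2(K)$ and to lift an arbitrary $2$-central automorphism $u\mapsto us$, $v\mapsto vt$ of $K$ to a $3$-central automorphism of $H$ via the final assertion of Proposition \ref{auth2}; since an automorphism of $K$ is determined by its values on the generators $u,v$, the lift is indeed a preimage. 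Both arguments are short diagram chases resting on the same prior propositions; the paper's is a bit more economical because the product decompositions are already on record, while yours makes the surjectivity completely explicit at the level of elements and uses the strongest part of Proposition \ref{auth2} (that every pair in $Z_3(H)\times Z_3(H)$ yields a $3$-central automorphism) rather than the factorization of $\mathrm{Aut}_3(H)$. No gaps: the facts you invoke (the central-quotient identity for the upper central series, $\nu^\Lambda=\mu$, normality of $\mathrm{Aut}_2(K)$ so that $\mathrm{Aut}_2(K)\langle\mu\rangle$ is a subgroup) are all available in the paper.
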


\begin{proof} By Proposition \ref{auth}, $\Lambda$ maps
$\mathrm{Aut}_2(H)$ onto
$\mathrm{Aut}_1(K)$, and it is clear that $\Lambda$ sends $\mathrm{Inn}(H)$ onto $\mathrm{Inn}(K)$.
Thus $\Lambda$ maps $\mathrm{Aut}_3(H)$ onto $\mathrm{Aut}_2(K)$. Obviously, $\Lambda$ sends $\nu$ into $\mu$,
so $\Lambda$ maps $\mathrm{Aut}_3(H)\langle\nu\rangle$ onto $\mathrm{Aut}_2(K)\langle\mu\rangle$.
\end{proof}

\begin{theorem}\label{auth6} We have $\mathrm{Aut}(H)=\mathrm{Aut}_3(H)\langle \nu\rangle$. In particular,
$|\mathrm{Aut}(H)|=2p^{8m}$.
\end{theorem}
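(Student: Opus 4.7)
The plan is to analyze the canonical map $\Lambda : \mathrm{Aut}(H) \to \mathrm{Aut}(K)$ of Proposition \ref{tet}, whose kernel is exactly $\mathrm{Aut}_1(H) \subseteq \mathrm{Aut}_3(H)$. Since $\mathrm{Aut}_3(H)$ is a $p$-group of order $p^{8m}$ (by Proposition \ref{auth2}) and $\nu$ has order $2$, the set $\mathrm{Aut}_3(H)\langle\nu\rangle$ has order $2p^{8m}$. Proposition \ref{tet} already gives $\Lambda(\mathrm{Aut}_3(H)\langle\nu\rangle) = \mathrm{Aut}_2(K)\langle\mu\rangle$, so the whole task reduces to the reverse inclusion $\Lambda(\mathrm{Aut}(H)) \subseteq \mathrm{Aut}_2(K)\langle\mu\rangle$; combined with $\ker\Lambda \subseteq \mathrm{Aut}_3(H)$, this forces $\mathrm{Aut}(H) = \mathrm{Aut}_3(H)\langle\nu\rangle$ and yields the order count $2p^{8m}$.

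Given $\Psi \in \mathrm{Aut}(H)$, Theorem \ref{autk6} writes $\Psi^\Lambda = g f_r \mu^j$ uniquely with $g \in \mathrm{Aut}_2(K)$, $0 \leq r < p^m$ prime to $p$, and $j \in \{0,1\}$. Proposition \ref{tet} lets me lift $g\mu^j \in \mathrm{Aut}_2(K)\langle\mu\rangle$ to some $\Phi \in \mathrm{Aut}_3(H)\langle\nu\rangle$. Replacing $\Psi$ by $\Phi^{-1}\Psi$ and using (\ref{nors}) to compute $\mu^{-j} f_r \mu^j$ (which is $f_r$ or $f_s$ with $rs \equiv 1 \bmod p^{2m}$), I reduce to the case $\Psi^\Lambda = f_{r'}$ for some $r'$ prime to $p$. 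Concretely this says $A^\Psi = A^{r'} z_1$ and $B^\Psi = B^{s'} z_2$ for some $z_1, z_2 \in Z(H)$, with $r' s' \equiv 1 \bmod p^{2m}$.

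The key step is to show $r' \equiv 1 \bmod p^m$. Because $\Psi$ is an automorphism of $H$, it preserves the defining relations $A^{[A,B]} = A^\alpha$ and $B^{[B,A]} = B^\alpha$, yielding exactly the pair of equations (\ref{pli}) that appear in Proposition \ref{tecnico} with the trivial choice $\ell = 1$, $\beta = \alpha$. That proposition then delivers $r' \equiv 1 \bmod p^m$, and Proposition \ref{autk4} turns this into $f_{r'} \in \mathrm{Aut}_2(K)$. Hence $\Psi^\Lambda \in \mathrm{Aut}_2(K) = \Lambda(\mathrm{Aut}_3(H))$; lifting $\Psi^\Lambda$ to some $\Phi_1 \in \mathrm{Aut}_3(H)$ via Proposition \ref{tet} gives $\Phi_1^{-1}\Psi \in \ker\Lambda = \mathrm{Aut}_1(H) \subseteq \mathrm{Aut}_3(H)$, so $\Psi \in \mathrm{Aut}_3(H)$ and unwinding the earlier reduction returns the original $\Psi$ to $\mathrm{Aut}_3(H)\langle\nu\rangle$. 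The genuine technical work (the commutator gymnastics and normal-form calculations) has already been packaged into Proposition \ref{tecnico}, so I expect no further obstacle here; the only subtlety is recognising that the situation at hand is precisely the case $\ell = 1$ of that lemma.
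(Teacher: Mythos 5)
Your proposal is correct and follows essentially the same route as the paper: reduce via $\Lambda$ and Proposition \ref{tet} to showing that any $f_r$ in the image of $\Lambda$ has $r\equiv 1\bmod p^m$, then invoke Proposition \ref{tecnico} with $\ell=1$, $\beta=\alpha$ and Proposition \ref{autk4}. Your explicit "divide out a lift of $g\mu^j$" reduction is just a slightly more hands-on phrasing of the paper's if-and-only-if argument (and conveniently sidesteps its remark that $(\mathrm{Aut}_2(K)\langle\mu\rangle)\cap S=\mathrm{Aut}_2(K)\cap S$), so there is nothing substantive to add.
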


\begin{proof}
By definition, $\ker(\Lambda)=\mathrm{Aut}_1(H)$. On the other hand,
$\Lambda$ maps $\mathrm{Aut}_3(H)\langle\nu\rangle$ onto $\mathrm{Aut}_2(K)\langle\mu\rangle$ by Proposition \ref{tet}, .
Since $\ker(\Lambda)$ is included in $\mathrm{Aut}_3(H)$, it follows
that $\mathrm{Aut}(H)=\mathrm{Aut}_3(H)\langle \nu\rangle$
if and only if $\mathrm{Im}(\Lambda)=\mathrm{Aut}_2(K)\langle\mu\rangle$. Let $S$ be the subgroup of $\mathrm{Aut}_2(K)$
defined Section \ref{autgmod2}, so that $\mathrm{Aut}(K)=\mathrm{Aut}_2(K)S\langle\mu\rangle$ by Theorem \ref{autk6}.
It follows that $\mathrm{Im}(\Lambda)=\mathrm{Aut}_2(K)\langle\mu\rangle$
if and only if the only elements $f_r$ of $S$ in $\mathrm{Im}(\Lambda)$
are those from $(\mathrm{Aut}_2(K)\langle\mu\rangle)\cap S$. According to Proposition \ref{autk4},
we have $f_r\in \mathrm{Aut}_2(K)\cap S$ if and only if $r\equiv 1\mod p^m$, and the proof of Theorem \ref{autk6}
makes it clear that $(\mathrm{Aut}_2(K)S)\cap\langle\mu\rangle$ is trivial, so
$(\mathrm{Aut}_2(K)\langle\mu\rangle)\cap S=\mathrm{Aut}_2(K)\cap S$.

Thus, we are reduced to show that $f_r\in \mathrm{Im}(\Lambda)$ only when $r\equiv 1\mod p^m$.
Suppose then that $f_r=g^\Lambda$ for some $g\in \mathrm{Aut}(H)$. From $\pi g^\Lambda= g\pi$ we deduce
$A^r\pi=Ag\pi$ and $B^s\pi=Bg\pi$, where $rs\equiv 1\mod p^{2m}$. Since $\ker(\pi)=Z(H)$, there exist
$z_1,z_2\in Z(H)$ such that $Ag=A^r z_1$, $Bg=A^s z_2$. As $g\in \mathrm{Aut}(H)$ the relations
$A^{[A,B]}=A^\alpha, B^{[B,A]}=B^\alpha$ must be preserved, so (\ref{pli}) holds with $\ell=1$ and $\beta=\alpha$.
It follows from Proposition \ref{tecnico} that $r\equiv 1\mod p^m$, as required.
\end{proof}

In terms of group extensions, we have the normal series
$$
1\subset \mathrm{Aut}_1(H)\subset\mathrm{Aut}_2(H)\subset \mathrm{Aut}_3(H)\subset \mathrm{Aut}(H).
$$
The first factor $\mathrm{Aut}_1(H)\cong (\Z/p^m\Z)^2$ by Proposition \ref{auth}. The second factor
$\mathrm{Aut}_2(H)/\mathrm{Aut}_1(H)\cong Z_2(H)^2/Z(H)^2\cong (\Z/p^m\Z)^4$ also by Proposition \ref{auth},
which yields $\mathrm{Aut}_2(H)\cong (\Z/p^m\Z)^6$ directly as well. The third factor
$$
\mathrm{Aut}_3(H)/\mathrm{Aut}_2(H)\cong \mathrm{Aut}_2(H)\mathrm{Inn}(H)/\mathrm{Aut}_2(H)\cong
\mathrm{Inn}(H)/\mathrm{Inn}(H)\cap \mathrm{Aut}_2(H)\cong H\delta/Z_3(H)\delta
$$
is isomorphic to $(H/Z(H))/(Z_3(H)/Z(H))\cong H/Z_3(H)\cong (\Z/p^m\Z)^2$. The fourth factor
$$
\mathrm{Aut}(H)/\mathrm{Aut}_3(H)\cong \Z/2\Z
$$
by Theorem \ref{auth6}.

We next find a presentation for the normal Sylow $p$-subgroup  $\mathrm{Aut}_3(H)$
of $\mathrm{Aut}(H)$ of order~$p^{8m}$. We appeal to the isomorphism
$\Pi:Z_2(H)\times Z_2(H)\to \mathrm{Aut}_2(H)$ defined in Proposition \ref{auth}.

Recall that $\mathrm{Aut}_3(H)=\mathrm{Aut}_2(H)\mathrm{Inn}(H)$. We have
$\mathrm{Aut}_2(H)\cap\mathrm{Inn}(H)=Z_3(H)\delta$. Now
$A^{p^m}\delta$ fixes $A$ and sends $B$ to $BC^{-p^m}$, so $A^{p^m}\delta=(1,C^{-p^m})\Pi$.
Moreover, $B^{p^m}\delta$ fixes $B$ and sends $A$ to $AC^{p^m}$, whence $A^{p^m}\delta=(C^{p^m},1)\Pi$.
Furthermore, $C\delta$ sends $A$ to $A^{\alpha}$ and $B$ to $B^{2-\alpha}$, hence
$C\delta=(A^{p^m k}, B^{-p^m k})\Pi$, where $\alpha-1=p^m k$. Since $Z_2(H)\times Z_2(H)$
is the internal direct product of the cyclic subgroups generated by
$$
(A^{p^m},1), (B^{p^m},1), (1,A^{p^m}), (C^{p^m},1), (1, C^{-p^m}), (A^{p^m k}, B^{-p^m k}),
$$
it follows that
$$
\mathrm{Aut}_3(H)=\langle (A^{p^m},1)\Pi, (B^{p^m},1)\Pi, (1,A^{p^m})\Pi\rangle\ltimes \mathrm{Inn}(H).
$$
Here $\mathrm{Inn}(H)\cong K$ is generated by
\begin{equation}\label{yu1}
a=A\delta, b=B\delta
\end{equation}
subject to the defining relations
\begin{equation}\label{relt1}
a^{[a,b]}=a^\alpha, b^{[b,a]}=b^\alpha, a^{p^{2m}}=1, b^{p^{2m}}=1, [a,b]^{p^{m}}=1.
\end{equation}
Set
\begin{equation}\label{yu2}
x=(A^{p^m},1)\Pi,\; y=(B^{p^m},1)\Pi,\; z=(1,A^{p^m})\Pi.
\end{equation}
Then $x,y,z$ generate a subgroup of $\mathrm{Aut}(H)$ isomorphic to $(\Z/p^m\Z)^3$, with defining relations
\begin{equation}\label{relt2}
x^{p^m}=y^{p^m}=z^{p^m}=1, [x,y]=[x,z]=[y,z]=1.
\end{equation}
We next use the formula $(u\delta)^T=(u^T)\delta$ for $T\in\mathrm{Aut}(H)$ and $u\in H$, applied to
$T\in\{x,y,z\}$ and $u\in \{A,B\}$, and obtain
\begin{equation}\label{relt3}
a^x=a^{1+p^m}, b^x=b, b^y=b^{1+p^m}, a^y=a, a^z=a, b^z=ba^{p^m}.
\end{equation}

We have proven the following result.

\begin{theorem} The group $\mathrm{Aut}_3(H)$ is a normal
Sylow $p$-subgroup of $\mathrm{Aut}(H)$ of order $p^{8m}$ and index 2 in $\mathrm{Aut}(H)$. It is generated by
$a,b,x,y,z$, as defined in (\ref{yu1}) and (\ref{yu2}), subject to the defining relations (\ref{relt1}), (\ref{relt2}),
and (\ref{relt3}).
\end{theorem}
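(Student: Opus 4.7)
The structural part of the statement is essentially cost-free: Theorem \ref{auth6} gives $|\mathrm{Aut}(H)|=2p^{8m}$ with $\mathrm{Aut}(H)=\mathrm{Aut}_3(H)\langle\nu\rangle$, and Proposition \ref{auth2} gives $|\mathrm{Aut}_3(H)|=p^{8m}$. Since $\nu$ has order $2$ and $p$ is odd, $\mathrm{Aut}_3(H)$ is a Sylow $p$-subgroup of index $2$, hence automatically normal. So the only real content is the presentation claim.

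My plan is to work with the semidirect product decomposition already established in the discussion preceding the theorem, namely
\[
\mathrm{Aut}_3(H)=\langle x,y,z\rangle\ltimes\mathrm{Inn}(H),
\]
where $\langle x,y,z\rangle\cong(\Z/p^m\Z)^3$ inside $\mathrm{Aut}_2(H)$ and $\mathrm{Inn}(H)\cong K$. I would first verify that the six relations (\ref{relt1}) are a defining set for $\mathrm{Inn}(H)$: since $\mathrm{Inn}(H)\cong H/Z(H)\cong K$ by the identification made at the beginning of Section \ref{autgmod56}, this is exactly the presentation of $K$ recalled just before Proposition \ref{tet}. Next I would check that the three relations (\ref{relt2}) are defining for $\langle x,y,z\rangle$: by Proposition \ref{auth}, the image under $\Pi$ of any pair from $Z_2(H)\times Z_2(H)$ has order equal to the orders of its components, and since $A^{p^m},B^{p^m}$ have order $p^m$ in $Z_2(H)$ and the three chosen pairs project to a basis of $Z_2(H)\times Z_2(H)$ modulo its other generators, these elements are independent of order~$p^m$, commuting because $\mathrm{Aut}_2(H)$ is abelian (again by Proposition \ref{auth}).

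The computational step is the action (\ref{relt3}). Using the formula $(u\delta)^T=(u^T)\delta$ for $T\in\mathrm{Aut}(H)$, $u\in H$, I would compute $A^T$ and $B^T$ directly for $T\in\{x,y,z\}$ from the definitions in (\ref{yu2}), read off $a^T=(A^T)\delta$ and $b^T=(B^T)\delta$, and translate via $A\delta=a$, $B\delta=b$, $C\delta=[a,b]$. For instance, $x=(A^{p^m},1)\Pi$ sends $A\mapsto A\cdot A^{p^m}=A^{1+p^m}$ and fixes $B$, yielding $a^x=a^{1+p^m}$ and $b^x=b$; the other four identities follow the same recipe. Once these are verified, the three relations in (\ref{relt3}) are exactly the conjugation rules needed to specify the semidirect product.

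Finally, to conclude that the listed relations are defining, I would invoke von Dyck: the abstract group $\widetilde G$ presented by (\ref{relt1})--(\ref{relt3}) surjects onto $\mathrm{Aut}_3(H)$ by the universal property, and the relations force $\widetilde G$ to be a quotient of $(\Z/p^m\Z)^3\ltimes K$, of order $p^{3m}\cdot p^{5m}=p^{8m}$; matching the order of $\mathrm{Aut}_3(H)$ from Proposition \ref{auth2}, the surjection is an isomorphism. The main obstacle I anticipate is purely mechanical, namely verifying the third relation $b^z=ba^{p^m}$, which involves tracking a commutator coming from the fact that $z=(1,A^{p^m})\Pi$ sends $B\mapsto BA^{p^m}$ and hence conjugating in $\mathrm{Inn}(H)$ produces the extra factor $a^{p^m}$; all other entries of (\ref{relt3}) are immediate.
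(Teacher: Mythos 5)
Your proposal follows the paper's own proof essentially verbatim: the structural claims are read off from Proposition \ref{auth2} and Theorem \ref{auth6}, the presentation comes from the decomposition $\mathrm{Aut}_3(H)=\langle x,y,z\rangle\ltimes\mathrm{Inn}(H)$ with $\mathrm{Inn}(H)\cong K$ and $\langle x,y,z\rangle\cong(\Z/p^m\Z)^3$, and the action is computed from $(u\delta)^T=(u^T)\delta$; your explicit von Dyck order count at the end is just the standard justification the paper leaves implicit. One caveat on the computational step you defer: with $y=(B^{p^m},1)\Pi$ as defined in (\ref{yu2}), your own recipe gives $A^y=AB^{p^m}$ and $B^y=B$, hence $a^y=ab^{p^m}$ and $b^y=b$, not the printed relations $a^y=a$, $b^y=b^{1+p^m}$ of (\ref{relt3}); so two of the ``other four identities'' do not in fact follow as stated. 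This is a discrepancy internal to the paper rather than a flaw in your method (note that redefining $y$ as $(1,B^{p^m})\Pi$ to rescue (\ref{relt3}) would ruin generation, since then $[a,b]=C\delta=\bigl((A^{p^m},B^{-p^m})\Pi\bigr)^{k}\in\langle x,y\rangle$ with $\alpha-1=p^mk$, and $\langle x,y,z\rangle\mathrm{Inn}(H)$ would have order at most $p^{7m}$); with the corrected action $a^y=ab^{p^m}$, $b^y=b$ the rest of your argument, including the order count $p^{3m}\cdot p^{5m}=p^{8m}$, goes through unchanged. Finally, be aware that the semidirect decomposition you quote is itself part of what the paper proves just before the theorem (by writing $A^{p^m}\delta$, $B^{p^m}\delta$, $C\delta$ as $\Pi$-images and decomposing $Z_2(H)\times Z_2(H)$ as a direct product of six cyclic factors), so a fully self-contained write-up should reproduce that verification rather than merely cite it.
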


\section{The automorphism group of $J$}\label{sejotar}

According to \cite[Theorem 8.1]{MS}, $J$ is nilpotent of class 5, while \cite[Theorem 5.3]{MS} ensures that $J$ has presentation
$$
J=\langle A,B\,|\, A^{[A,B]}=A^\alpha, B^{[B,A]}=B^\alpha, A^{p^{3m}}=1=B^{p^{3m}}\rangle,
$$
and we set $C=[A,B]$. We have an automorphism $A\leftrightarrow B$, say $\theta$, of $J$.

By \cite[Theorem 7.1]{MS}, $|J|=p^{7m}$, $A,B,C$ have respective orders $p^{3m},p^{3m},p^{2m}$,
and every element of $J$ can be written in one and only one way in the form
$A^iB^jC^k$, where $0\leq i<p^{3m}$ and $0\leq j,k<p^{2m}$. Here $A^{p^{2m}}B^{p^{2m}}=1$, as indicated
in \cite[Section 6]{MS}.

Due to \cite[Theorem 8.1]{MS}, we have
$$
Z(J)=\langle A^{p^{2m}}\rangle,
Z_2(J)=\langle A^{p^{2m}}, C^{p^{m}}\rangle,\;
Z_3(J)=\langle A^{p^{m}}, B^{p^{m}}, C^{p^{m}}\rangle,\; Z_4(J)=\langle A^{p^{m}}, B^{p^{m}}, C\rangle.
$$
Here $Z_3(J)$ is abelian of order $p^{4m}$ by \cite[Proposition 8.1]{MS}. It follows that $Z(J)\cong\Z/p^m\Z$
and $Z_2(J)\cong(\Z/p^m\Z)^2$. The isomorphism $J/Z_3(J)\cong \mathrm{Heis}(\Z/p^m\Z)$ yields $J/Z_4(J)\cong (\Z/p^m\Z)^2$,
whence $|Z_4(J)|=p^{5m}$. We also have $\mathrm{Inn}(J)\cong J/Z(J)\cong H$, a group of order $p^{6m}$, where $H$ is defined in Section \ref{autgmod56}.

\begin{prop}\label{autg} For every $x,y\in Z_2(J)$ the assignment
$$
A\mapsto Ax,\; B\to By
$$
extends to a 2-central automorphism $(x,y)\Psi$ of $J$ that fixes $Z_3(J)$ pointwise. Moreover, the corresponding map
$\Psi:Z_2(J)\times Z_2(J)\to \mathrm{Aut}(J)$ is a group monomorphism whose image is $\mathrm{Aut}_2(J)$. In particular,
$|\mathrm{Aut}_2(J)|=p^{4m}$.
\end{prop}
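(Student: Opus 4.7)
The plan is to mirror the proofs of Propositions \ref{autk} and \ref{auth}, with the role of the fixed subgroup (previously $Z_2(H)$) now played by $Z_3(J)$. The main work is to verify that for every $u \in Z_2(J) = \langle A^{p^{2m}}, C^{p^m}\rangle$ the relations
\begin{equation}
(Au)^{[A,B]} = (Au)^\alpha, \quad (Bu)^{[B,A]} = (Bu)^\alpha, \quad (Au)^{p^{3m}} = 1 = (Bu)^{p^{3m}}
\end{equation}
hold in $J$. For the conjugation relations, observe that $u$ lies in the abelian group $Z_3(J)$, which also contains $C = [A,B]$, so $u^C = u$ and $(Au)^C = A^\alpha u$. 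Writing $u^A = uz$ with $z = [u,A] \in Z(J)$, an easy induction gives $(Au)^n = A^n u^n z^{n(n-1)/2}$; specializing to $n = \alpha = 1 + p^m\ell$, the correction terms vanish because $u$ has order dividing $p^m$ (since $Z_2(J) \cong (\Z/p^m\Z)^2$) and $z^{p^m} = 1$ (since $Z(J) \cong \Z/p^m\Z$), so $(Au)^\alpha = A^\alpha u = (Au)^C$.

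For the power relation $(Au)^{p^{3m}} = 1$, use the normal form $u = A^{ip^{2m}} C^{kp^m}$; the central factor $A^{ip^{2m}}$ separates out and contributes $A^{ip^{5m}} = 1$, reducing the task to $(A C^{kp^m})^{p^{3m}} = 1$. Setting $t = C^{kp^m}$ and $t^A = ts'$ with $s' \in Z(J)$, the same telescoping yields $(At)^{p^{3m}} = A^{p^{3m}} t^{p^{3m}} (s')^{p^{3m}(p^{3m}-1)/2} = 1$: here $A^{p^{3m}} = 1$, $t^{p^{3m}} = C^{kp^{4m}} = 1$, and $p$ odd ensures $p^m \mid p^{3m}(p^{3m}-1)/2$, so $(s')^{p^{3m}(p^{3m}-1)/2} = 1$. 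The analogous calculation handles $(Bu)^{p^{3m}} = 1$. Given the three relations, for $x,y \in Z_2(J)$ the commutator identities (\ref{comfor}) together with $[Z_2(J), J] \subseteq Z(J)$ yield $[Ax, By] = Cw$ for some $w \in Z(J)$. Since $w$ is central it commutes with $Ax$, so $(Ax)^{[Ax, By]} = (Ax)^C = (Ax)^\alpha$, and similarly for $By$. All four defining relations of $J$ are preserved, producing an endomorphism $(x,y)\Psi$.

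To show $(x,y)\Psi$ fixes $Z_3(J) = \langle A^{p^m}, B^{p^m}, C^{p^m}\rangle$ pointwise, reuse the telescoping: $(Ax)^{p^m} = A^{p^m}$ (again $p$ odd ensures $p^m \mid p^m(p^m-1)/2$), $(By)^{p^m} = B^{p^m}$, and $[Ax, By]^{p^m} = (Cw)^{p^m} = C^{p^m}$ because $w^{p^m} = 1$. The rest of the argument is then routine in the spirit of Proposition \ref{autk}: since $Z_2(J) \subset Z_3(J)$ is fixed pointwise by every $(x,y)\Psi$, composition gives $(x,y)\Psi \circ (x',y')\Psi = (xx', yy')\Psi$ (using that $x'$ is fixed and $Z_2(J)$ is abelian), so $\Psi$ is a group homomorphism. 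The normal form for $J$ yields injectivity, and every 2-central automorphism of $J$ is visibly of the form $(u,v)\Psi$ for unique $u, v \in Z_2(J)$, giving surjectivity onto $\mathrm{Aut}_2(J)$ and hence $|\mathrm{Aut}_2(J)| = |Z_2(J)|^2 = p^{4m}$. The main technical obstacle is the $p^{3m}$-power verification, which depends crucially on $p$ being odd and on the detailed control of $Z(J)$-valued commutators with elements of $Z_2(J)$ supplied by the known structure of $J$.
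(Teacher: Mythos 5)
Your proposal is correct and has the same overall skeleton as the paper's proof (verify the four defining relations for $Ax$, $By$; reduce $[Ax,By]$ to $Cw$ with $w\in Z(J)$; show $Z_3(J)$ is fixed pointwise; then run the Proposition \ref{autk} template for the homomorphism property, injectivity, and surjectivity onto $\mathrm{Aut}_2(J)$), but the computational core is genuinely different. The paper first proves $(Au)^{p^m}=A^{p^m}$ by an explicit computation with $\beta=\alpha^{\ell p^m}$ and the geometric sum $(\beta^{p^m}-1)/(\beta-1)$, and then deduces $(Au)^{\alpha}=A^{\alpha}u$ and $(Au)^{p^{3m}}=1$ from that identity; you instead use the collection formula $(Au)^n=A^n u^n [u,A]^{n(n-1)/2}$, valid because $[u,A]\in[Z_2(J),J]\subseteq Z(J)$ is central, and specialize $n=\alpha,\,p^m,\,p^{3m}$, using that $p$ is odd and that $Z_2(J)$ and $Z(J)$ have exponent $p^m$. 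This is a clean and slightly more elementary route to the same facts, and it also delivers $(Ax)^{p^m}=A^{p^m}$, $(By)^{p^m}=B^{p^m}$, $[Ax,By]^{p^m}=C^{p^m}$ needed for the pointwise fixing of $Z_3(J)$, exactly as in the paper.

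One misstatement to repair: you justify $u^C=u$ by saying that $C=[A,B]$ lies in the abelian group $Z_3(J)$; this is false, since $Z_3(J)=\langle A^{p^m},B^{p^m},C^{p^m}\rangle$ while $C$ only lies in $Z_4(J)$ (indeed $J/Z_3(J)\cong\mathrm{Heis}(\Z/p^m\Z)$ is nonabelian). The fact you need, $[u,C]=1$ for all $u\in Z_2(J)$, is nevertheless true and immediate from $Z_2(J)=\langle A^{p^{2m}},C^{p^m}\rangle$ with $A^{p^{2m}}\in Z(J)$ --- the same normal form you invoke for the $p^{3m}$-power computation --- so the fix is purely cosmetic and the rest of the argument stands.
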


\begin{proof} We begin by showing that
\begin{equation}
\label{popy}
(Au)^\alpha=A^\alpha u,\; (Bu)^{\alpha}=B^\alpha u,\; (Au)^{p^{3m}}=1=(Bu)^{p^{3m}}\quad u\in Z_2(J).
\end{equation}

Let $u\in Z_2(J)$. Then $u=A^{j p^{2m}}C^{\ell p^{m}}$, with $j,\ell\in\Z$, $A^{p^{2m}}\in Z(J)$ and $[C,u]=1$.
Then
$$
(Au)^{p^{m}}=(A A^{j p^{2m}}C^{\ell p^{m}})^{p^{m}}=
(A C^{p^{m}\ell})^{p^{m}}.
$$
Set $\beta=\alpha^{\ell p^{m}}$. Then
$$
(Au)^{p^{m}}=C^{\ell p^{2m}}A^{\beta(\beta^{p^{m}}-1)/(\beta-1)}.
$$
Here $\alpha=1+kp^m$, with $p\nmid k$, and we deduce
\begin{equation}
\label{bcer}
\beta\equiv 1+k\ell p^{2m}\mod p^{3m}.
\end{equation}
On the other hand,
$$
\beta^{p^{m}}=(1+(\beta-1))^{p^{m}}=1+p^{m}(\beta-1)+{{p^{m}}\choose{2}}(\beta-1)^2+{{p^{m}}\choose{3}}
(\beta-1)^3+\cdots.
$$
Since $\beta-1\equiv 0\mod p^{2m}$ by (\ref{bcer}), we see that
\begin{equation}
\label{bcer2}
(\beta^{p^{m}}-1)/(\beta-1)\equiv p^{m}\mod  p^{3m},
\end{equation}
so (\ref{bcer}) and (\ref{bcer2}) yield
$$
\beta(\beta^{p^{m}}-1)/(\beta-1)\equiv (1+k\ell p^{2m})p^{m}\equiv p^{m}\mod  p^{3m}.
$$
This proves that $(Au)^{p^{m}}=A^{p^{m}}$, and applying the automorphism $A\leftrightarrow B$ of $J$ we obtain
\begin{equation}
\label{mat1}
(Au)^{p^{m}}=A^{p^{m}},\; (Bu)^{p^{m}}=B^{p^{m}}.
\end{equation}
It follows from (\ref{mat1}) that  $(Au)^{p^{3m}}=1$ and $(Bu)^{p^{3m}}=1$.
Moreover, since $\alpha=1+r p^m$, with $r\in\Z$, we deduce $[A^{p^m},C^{p^m}]=1$, hence
$[A^{p^m},u]=1$, and therefore (\ref{mat1}) gives $(Au)^\alpha=Au (Au)^{\alpha-1}=Au A^{rp^m}=A^\alpha u$,
whence the automorphism $A\leftrightarrow B$ of $J$ yields $(Bu)^\alpha=B^\alpha u$.
This proves (\ref{popy}). On the other hand, working modulo $Z(J)$, we see that (\ref{comfor}) implies
\begin{equation}
\label{mat6}
[Ax,Ay]=[A,B]z
\end{equation}
for a unique $z\in Z(J)$. Since $[C,x]=1=[C,y]$, (\ref{popy}) and (\ref{mat6}) yield
$$
(Ax)^{[Ax,By]}=(Ax)^{[A,B]}=A^\alpha x=(Ax)^\alpha, (By)^{[By,Bx]}=(By)^{[B,A]}=B^\alpha y=(By)^\alpha,A^{p^{3m}}=1=B^{p^{3m}}.
$$
This shows that the defining relations of $J$ are preserved, which
yields an endomorphism $(x,y)\Psi$ of $J$ that fixes $A^{p^m}$ and $B^{p^m}$ by (\ref{mat1}).
Moreover,
$$
C^{p^{m}}\mapsto (Cz)^{p^{m}}=C^{p^{m}}z^{p^{m}}=C^{p^{m}},
$$
because $z\in\langle Z(J)\rangle$, a group of exponent $p^{m}$. Thus $(x,y)\Psi$ fixes $Z_3(J)$ pointwise.
We may now continue the argument used in the proof of Proposition \ref{autk}.
\end{proof}

\begin{prop}\label{autg2} We have $\mathrm{Inn}(J)\cap\mathrm{Aut}_2(J)=
\langle C^{\alpha-1}\delta, A^{\alpha-1}\delta, B^{\alpha-1}\delta\rangle$, $|\mathrm{Inn}(J)\cap\mathrm{Aut}_2(J)|=p^{3m}$,
and $|\mathrm{Inn}(J)\mathrm{Aut}_2(J)|=p^{7m}$. In fact,
$\mathrm{Inn}(J)\mathrm{Aut}_2(J)=\langle (A^{p^{2m}},A^{-p^{2m}})\Psi\rangle\ltimes\mathrm{Inn}(J)$.
\end{prop}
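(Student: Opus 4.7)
The plan is three-fold. First, identify the intersection $\mathrm{Inn}(J)\cap\mathrm{Aut}_2(J)$ and compute its order. Second, derive $|\mathrm{Inn}(J)\mathrm{Aut}_2(J)|=p^{7m}$ via the standard product formula. Third, verify that $\langle\Theta\rangle$, where $\Theta=(A^{p^{2m}},A^{-p^{2m}})\Psi$, supplies an order-$p^m$ complement to $\mathrm{Inn}(J)$ inside $\mathrm{Inn}(J)\mathrm{Aut}_2(J)$.

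For the intersection: the principle $g\delta\in\mathrm{Aut}_i(J)\Leftrightarrow g\in Z_{i+1}(J)$, recalled in the introduction, yields $\mathrm{Inn}(J)\cap\mathrm{Aut}_2(J)=Z_3(J)\delta\cong Z_3(J)/Z(J)$, of order $p^{4m}/p^m=p^{3m}$ from the descriptions of $Z_3(J)$ and $Z(J)$ at the start of the section. Writing $\alpha-1=p^m\ell$ with $p\nmid\ell$, we see that $A^{\alpha-1}=A^{p^m\ell}$ generates the same cyclic subgroup modulo $Z(J)=\langle A^{p^{2m}}\rangle$ as $A^{p^m}$, and likewise for $B^{\alpha-1}$ and $C^{\alpha-1}$. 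Hence $A^{\alpha-1}\delta$, $B^{\alpha-1}\delta$, $C^{\alpha-1}\delta$ generate $\mathrm{Inn}(J)\cap\mathrm{Aut}_2(J)$. The order count
$|\mathrm{Inn}(J)\mathrm{Aut}_2(J)|=|\mathrm{Inn}(J)|\,|\mathrm{Aut}_2(J)|/|\mathrm{Inn}(J)\cap\mathrm{Aut}_2(J)|=p^{6m}\cdot p^{4m}/p^{3m}=p^{7m}$
then follows using $|\mathrm{Inn}(J)|=|J/Z(J)|=p^{6m}$ and $|\mathrm{Aut}_2(J)|=p^{4m}$ from Proposition \ref{autg}.

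For the semidirect product, $(A^{p^{2m}},A^{-p^{2m}})$ has order $p^m$ in $Z_2(J)\times Z_2(J)\cong(\Z/p^m\Z)^4$, so $\Theta$ has order $p^m$. Using the relation $A^{p^{2m}}B^{p^{2m}}=1$, the $k$-th iterate $\Theta^k$ sends $A\mapsto A^{1+kp^{2m}}$ and $B\mapsto B^{1+kp^{2m}}$. In particular $\Theta^k$ is trivial on $J/Z(J)=H$, so $\langle\Theta\rangle\subseteq\mathrm{Aut}_1(J)$, and any element of $\langle\Theta\rangle\cap\mathrm{Inn}(J)$ lies in $\mathrm{Inn}(J)\cap\mathrm{Aut}_1(J)=Z_2(J)\delta$. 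Suppose $\Theta^k=g\delta$ with $g=A^{ap^{2m}}C^{cp^m}\in Z_2(J)$. A binomial expansion (using $p$ odd to kill the higher-order correction terms) gives $\alpha^{p^m}\equiv 1+p^{2m}\ell\pmod{p^{3m}}$, and multiplying through by $\alpha^{-p^m}$ gives the dual congruence for $B$, since $B^C=B^{\alpha^{-1}}$. Because $A^{ap^{2m}}\in Z(J)$, conjugation by $g$ reduces to conjugation by $C^{cp^m}$, yielding $A^g=A^{1+cp^{2m}\ell}$ and $B^g=B^{1-cp^{2m}\ell}$. Matching with the action of $\Theta^k$ forces simultaneously $k\equiv c\ell\pmod{p^m}$ and $k\equiv -c\ell\pmod{p^m}$, hence $2k\equiv 0\pmod{p^m}$, whence $k\equiv 0\pmod{p^m}$ since $p$ is odd. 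Thus $\Theta^k=1$, so $\langle\Theta\rangle\cap\mathrm{Inn}(J)=1$. Since $\mathrm{Inn}(J)$ is normal in $\mathrm{Aut}(J)$, the product $\langle\Theta\rangle\mathrm{Inn}(J)\cong\langle\Theta\rangle\ltimes\mathrm{Inn}(J)$ has order $p^m\cdot p^{6m}=p^{7m}=|\mathrm{Inn}(J)\mathrm{Aut}_2(J)|$, so the two agree.

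The main difficulty is the last intersection step. The crucial asymmetry is that $\Theta$ multiplies both $A$ and $B$ by the same correction $A^{kp^{2m}}=B^{-kp^{2m}}$, whereas any element of $Z_2(J)$, modulo its central part $\langle A^{p^{2m}}\rangle$, conjugates $A$ and $B$ by the inverse multiplicative powers $\alpha^{p^m}$ and $\alpha^{-p^m}$, producing opposite-sign exponent shifts $\pm p^{2m}\ell$. Matching these incompatible actions collapses the problem to $2k\equiv 0\pmod{p^m}$, which the oddness of $p$ resolves immediately.
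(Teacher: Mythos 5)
Your proof is correct, and its overall skeleton coincides with the paper's: identify $\mathrm{Inn}(J)\cap\mathrm{Aut}_2(J)$ as $Z_3(J)\delta$, apply the product formula, and then show $\langle\Theta\rangle\cap\mathrm{Inn}(J)=1$ by exploiting that conjugation by a power of $C$ shifts the exponents of $A$ and $B$ with \emph{opposite} signs while $\Theta^k$ shifts both with the \emph{same} sign, so that $2k\equiv 0\bmod p^m$ and oddness of $p$ finishes. The one place where your route genuinely differs is how you localize the hypothetical conjugating element: the paper observes that $x$ must normalize both $\langle A\rangle$ and $\langle B\rangle$ and then imports the normalizer computation $N_J(\langle A\rangle)\cap N_J(\langle B\rangle)=\langle A^{p^{2m}},C\rangle$ from \cite{MS}, whereas you note that $\Theta\in\mathrm{Aut}_1(J)$ and invoke the general identity $\mathrm{Inn}(J)\cap\mathrm{Aut}_1(J)=Z_2(J)\delta$ to place $x$ in $\langle A^{p^{2m}},C^{p^m}\rangle$ directly. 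Your version is self-contained (no appeal to the external normalizer results) and gives an even tighter localization, at the negligible cost of the binomial estimate $\alpha^{p^m}\equiv 1+p^{2m}\ell \bmod p^{3m}$; the exponent-matching conclusion is then identical in both arguments.
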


\begin{proof} As $Z_3(J)/Z(J)$ has order $p^{3m}$ and is generated by the cosets of $C^{\alpha-1},A^{\alpha-1}, B^{\alpha-1}$,
it follows that
$\mathrm{Inn}(J)\cap\mathrm{Aut}_2(J)=\langle C^{\alpha-1}\delta,A^{\alpha-1}\delta, B^{\alpha-1}\delta\rangle$ has order $p^{3m}$.
Thus $\mathrm{Inn}(J)\mathrm{Aut}_2(J)$  has order $p^{6m}p^{4m} p^{-3m}=p^{7m}$. We claim
that $\mathrm{Inn}(J)\cap\langle (A^{p^m},A^{-p^{m}})\Psi\rangle$ is trivial, which readily yields
the last assertion. To see the claim, suppose $(A^{rp^{2m}},A^{-rp^{2m}})\Psi=x\delta$ for some $r\in\Z$ and $x\in J$.
Then $A^x=A^{1+rp^{2m}}$ and $B^x=BA^{-r p^{2m}}=B^{1+rp^{2m}}$, which
implies $x\in N_J(\langle A\rangle)\cap N_J(\langle B \rangle)$. It follows from \cite[Propositions 7.2 and 7.3]{MS} that 
$N_J(\langle A \rangle)\cap N_J(\langle B \rangle)=
\langle A^{p^{2m}},C\rangle$, so $x=A^{sp^{2m}}C^t$ for some $s,t\in\N$, whence $A^{1+rp^{2m}}=A^x=A^{C^t}=A^{\alpha^t}$.
Here $\alpha=1+kp^m$, with $k\in\Z$ and $\gcd(k,p)=1$, so we deduce
$t=sp^m$ for some $s\in\Z$ such that $sk\equiv r\mod p^m$.
As $(1+rp^{2m})(1-rp^{2m})\equiv 1\mod p^{3m}$, we infer
$B^x=B^{C^t}=B^{1-rp^{2m}}$. But $B^x=B^{1+rp^{2m}}$ as well, so $r\equiv 0\mod p^m$, as claimed.
\end{proof}

\begin{prop}\label{zi2} Let $T$ be a group, and set $Y=T/Z(T)$. Let $\lambda:T\to Y$ be the canonical projection,
and consider the associated map $\Lambda:\mathrm{Aut}(T)\to \mathrm{Aut}(Y)$.
Then for any $i\geq 0$, $\Lambda$ maps $\mathrm{Aut}_{i+2}(T)$ into $\mathrm{Aut}_{i+1}(Y)$, and the kernels of the
induced maps $\mathrm{Aut}_{i+2}(T)\to\mathrm{Aut}_{i+1}(Y)/\mathrm{Aut}_{i}(Y)$ and
$\mathrm{Inn}(T)\mathrm{Aut}_{i+2}(T)\to\mathrm{Inn}(Y)\mathrm{Aut}_{i+1}(Y)/\mathrm{Inn}(Y)\mathrm{Aut}_{i}(Y)$ are
$\mathrm{Aut}_{i+1}(T)$ and $\mathrm{Inn}(T)\mathrm{Aut}_{i+1}(T)$, respectively.
Thus, $\mathrm{Aut}_{i+2}(T)/\mathrm{Aut}_{i+1}(T)$ is isomorphic to a subgroup of
$\mathrm{Aut}_{i+1}(Y)/\mathrm{Aut}_{i}(Y)$,
and $\mathrm{Inn}(T)\mathrm{Aut}_{i+2}(T)/\mathrm{Inn}(T)\mathrm{Aut}_{i+1}(T)$ imbeds into
$\mathrm{Inn}(Y)\mathrm{Aut}_{i+1}(Y)/\mathrm{Inn}(Y)\mathrm{Aut}_{i}(Y)$.
\end{prop}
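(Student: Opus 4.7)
My plan starts from the basic identification: since $Z(T)$ is characteristic, $\lambda$ induces a well-defined homomorphism $\Lambda\colon\mathrm{Aut}(T)\to\mathrm{Aut}(Y)$, and the upper central series of $Y=T/Z(T)$ satisfies $Z_j(Y)=Z_{j+1}(T)/Z(T)$ for all $j\geq 0$ (immediate by induction from the definition of the upper central series in a quotient by a central subgroup). Consequently, the canonical map $T/Z_{j+1}(T)\to Y/Z_j(Y)$ is an isomorphism that intertwines the action of any $g\in\mathrm{Aut}(T)$ with the action of $g^\Lambda$. In particular, if $g\in\mathrm{Aut}_{i+2}(T)$ then $g$ acts trivially on $T/Z_{i+2}(T)\cong Y/Z_{i+1}(Y)$, so $g^\Lambda\in\mathrm{Aut}_{i+1}(Y)$.

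For the first induced map $\mathrm{Aut}_{i+2}(T)\to\mathrm{Aut}_{i+1}(Y)/\mathrm{Aut}_i(Y)$, an element $g\in\mathrm{Aut}_{i+2}(T)$ lies in the kernel iff $g^\Lambda$ acts trivially on $Y/Z_i(Y)$, and by the same intertwining this happens iff $g$ acts trivially on $T/Z_{i+1}(T)$, i.e.\ iff $g\in\mathrm{Aut}_{i+1}(T)$. This identifies the first kernel and yields $\mathrm{Aut}_{i+2}(T)/\mathrm{Aut}_{i+1}(T)\hookrightarrow\mathrm{Aut}_{i+1}(Y)/\mathrm{Aut}_i(Y)$. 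The inclusion $\Lambda(\mathrm{Inn}(T))\subseteq\mathrm{Inn}(Y)$ is clear since $(a\delta)^\Lambda=(a\lambda)\delta$, so $\Lambda$ maps $\mathrm{Inn}(T)\mathrm{Aut}_{i+2}(T)$ into $\mathrm{Inn}(Y)\mathrm{Aut}_{i+1}(Y)$, and the target quotient makes sense because both $\mathrm{Inn}(Y)$ and $\mathrm{Aut}_i(Y)$ are normal in $\mathrm{Aut}(Y)$.

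The essential content is the second kernel computation: suppose $g=h\,g'$ with $h\in\mathrm{Inn}(T)$, $g'\in\mathrm{Aut}_{i+2}(T)$, and $g^\Lambda\in\mathrm{Inn}(Y)\mathrm{Aut}_i(Y)$. Since $h^\Lambda\in\mathrm{Inn}(Y)$, we get $(g')^\Lambda\in\mathrm{Inn}(Y)\mathrm{Aut}_i(Y)$, while also $(g')^\Lambda\in\mathrm{Aut}_{i+1}(Y)$ by the first paragraph. The Dedekind modular law, applied to the chain $\mathrm{Aut}_i(Y)\subseteq\mathrm{Aut}_{i+1}(Y)$, gives
$$
\mathrm{Aut}_{i+1}(Y)\cap\mathrm{Inn}(Y)\mathrm{Aut}_i(Y)=\bigl(\mathrm{Inn}(Y)\cap\mathrm{Aut}_{i+1}(Y)\bigr)\mathrm{Aut}_i(Y)=Z_{i+2}(Y)\delta\cdot\mathrm{Aut}_i(Y),
$$
using the standing identity $\mathrm{Inn}(Y)\cap\mathrm{Aut}_{i+1}(Y)=Z_{i+2}(Y)\delta$ recorded in the introduction. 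Write $(g')^\Lambda=(\bar y\,\delta)\,n$ with $\bar y\in Z_{i+2}(Y)$ and $n\in\mathrm{Aut}_i(Y)$, and lift $\bar y$ to $y\in Z_{i+3}(T)$ using $Z_{i+2}(Y)=Z_{i+3}(T)/Z(T)$.

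The trick is to absorb $\bar y\,\delta$ by left multiplication: set $g'':=(y\delta)^{-1}g'$. Then $g''\in\mathrm{Aut}_{i+2}(T)$ (since $y\in Z_{i+3}(T)$ forces $y\delta\in\mathrm{Aut}_{i+2}(T)$), and $(g'')^\Lambda=(\bar y\,\delta)^{-1}(\bar y\,\delta)\,n=n\in\mathrm{Aut}_i(Y)$. By the first part of the proof, $g''\in\mathrm{Aut}_{i+1}(T)$, so $g=h\,(y\delta)\,g''\in\mathrm{Inn}(T)\mathrm{Aut}_{i+1}(T)$. The reverse inclusion $\mathrm{Inn}(T)\mathrm{Aut}_{i+1}(T)\subseteq\ker$ is transparent from $\Lambda(\mathrm{Inn}(T))\subseteq\mathrm{Inn}(Y)$ and $\Lambda(\mathrm{Aut}_{i+1}(T))\subseteq\mathrm{Aut}_i(Y)$, yielding the second embedding. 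The only subtle point is the order of multiplication in the trick — multiplying on the right would leave an awkward conjugate of $n$, whereas multiplying by $(y\delta)^{-1}$ on the \emph{left} cancels $\bar y\,\delta$ cleanly and reduces everything to the first part.
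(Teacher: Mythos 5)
Your proposal is correct, and its setup coincides with the paper's: the identification $Z_j(Y)=Z_{j+1}(T)/Z(T)$, the compatibility of the actions of $g$ and $g^\Lambda$ on $T/Z_{j+1}(T)\cong Y/Z_j(Y)$, and the first kernel computation are exactly what the paper proves, just phrased element-wise there (an induction showing $\lambda$ maps $Z_{j+1}(T)$ onto $Z_j(Y)$, then lifting $t^g t^{-1}$ through $\lambda$). You genuinely diverge only at the second kernel. The paper writes $g^\Lambda=hk$ with $h\in\mathrm{Inn}(Y)$, $k\in\mathrm{Aut}_i(Y)$, lifts $h$ to an arbitrary $w\in\mathrm{Inn}(T)$ (possible since $\lambda$ is onto, so $\mathrm{Inn}(T)\to\mathrm{Inn}(Y)$ is onto), and notes that $(w^{-1}g)^\Lambda=k\in\mathrm{Aut}_i(Y)$ already forces $w^{-1}g\in\mathrm{Aut}_{i+1}(T)$: its first-kernel argument never uses membership in $\mathrm{Aut}_{i+2}(T)$, only that the image lies in $\mathrm{Aut}_i(Y)$. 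Because you apply your first part only in the restricted form (kernel of the map defined on $\mathrm{Aut}_{i+2}(T)$), you must keep the corrected element inside $\mathrm{Aut}_{i+2}(T)$, and that is what forces the extra machinery: the Dedekind modular law together with $\mathrm{Inn}(Y)\cap\mathrm{Aut}_{i+1}(Y)=Z_{i+2}(Y)\delta$ to pin the inner factor into $Z_{i+2}(Y)\delta$, which then lifts to $Z_{i+3}(T)\delta\subseteq\mathrm{Aut}_{i+2}(T)$. Both routes are valid; the paper's is lighter, while yours pays for the restriction with the modular-law detour but in exchange records precisely which inner automorphisms of $Y$ can occur in the decomposition of $(g')^\Lambda$. (Your closing remark about left versus right multiplication is a non-issue: since $\mathrm{Aut}_i(Y)$ is normal, the conjugate of $n$ would still lie in $\mathrm{Aut}_i(Y)$, so either side works.)
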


\begin{proof} We first show by induction that $\lambda$ sends $Z_{i+1}(T)$ onto $Z_{i}(Y)$ for any $i\geq 0$.
The base case $i=0$ holds by the very definition of $\lambda$. Suppose that $\lambda$ maps $Z_{i+1}(T)$ onto $Z_i(Y)$
for some $i\geq 0$. Let $t\in Z_{i+2}(T)$. Then $[t,\sigma]\in Z_{i+1}(T)$, so
$[t^\lambda,\sigma^\lambda]=[t,\sigma]^\lambda\in Z_i(Y)$ for every $\sigma\in T$,
whence $t^\lambda\in Z_{i+1}(Y)$. Conversely, if $y\in Z_{i+1}(Y)$, then $y=t^\lambda$ for some $t\in T$. Let $\sigma\in T$.
Then $[t,\sigma]^\lambda=[t^\lambda,\sigma^\lambda]\in Z_i(Y)$, so $[t,\sigma]^\lambda=r^\lambda$ for some $r\in Z_{i+1}(T)$,
hence $[t,\sigma]r^{-1}\in\ker(\lambda)=Z(T)$, and therefore $[t,\sigma]\in Z(T)Z_{i+1}(T)=Z_{i+1}(T)$, which implies $t\in Z_{i+2}(T)$.
This shows that $\lambda$ maps $Z_{i+2}(T)$ onto $Z_i(Y)$.

For $t\in T$, we set $\overline{t}=t^\lambda$, and if $g\in \mathrm{Aut}(T)$, then $\overline{g}=g^\Lambda$ is defined
by $\overline{t}^{\overline{g}}=\overline{t^g}$. This is well-defined, as $Z(T)$ is a characteristic subgroup of $T$.

Let $i\geq 0$. We claim that $\Lambda$ sends $\mathrm{Aut}_{i+2}(T)$ into $\mathrm{Aut}_{i+1}(Y)$.
Indeed, let $g\in \mathrm{Aut}_{i+2}(T)$ and
$t\in T$. Then $t^g t^{-1}\in Z_{i+2}(T)$, so $\overline{t^g t^{-1}}\in Z_{i+1}(Y)$ by above, which means
$\overline{t}^{\overline{g}} \overline{t}^{-1} Z_{i+1}(Y)$, so $g^\Lambda\in \mathrm{Aut}_{i+1}(Y)$,  as claimed.

By above, we have a group homomorphism $\eta:\mathrm{Aut}_{i+2}(T)\to \mathrm{Aut}_{i+1}(Y)/\mathrm{Aut}_{i}(Y)$ with
$\mathrm{Aut}_{i+1}(T)$ contained in $\ker(\eta)$, and we claim equality prevails.
Let $g\in\ker(\eta)$. Then $g^\Lambda\in \mathrm{Aut}_{i}(Y)$. Let $t\in T$.
Then $\overline{t}^{\overline{g}} \overline{t}^{-1}\in Z_i(Y)$, so $\overline{t^g t^{-1}}\in Z_{i}(Y)$. As $\lambda$
maps $Z_{i+1}(T)$ onto $Z_i(Y)$, there is some $\sigma\in Z_{i+1}(T)$ such that $\overline{t^g t^{-1}}=\overline{\sigma}$,
whence $t^g t^{-1}\sigma^{-1}\in Z(T)$, so $t^g t^{-1}\in Z(T)Z_{i+1}(T)=Z_{i+1}(T)$, and therefore $g\in \mathrm{Aut}_{i+1}(T)$.
Thus $\ker(\eta)=\mathrm{Aut}_{i+1}(T)$, as claimed.

Now $\Lambda$ maps $\mathrm{Inn}(T)\mathrm{Aut}_{i+2}(T)$ into
$\mathrm{Inn}(Y)\mathrm{Aut}_{i+1}(Y)$, and $\mathrm{Inn}(T)\mathrm{Aut}_{i+1}(T)$ into
$\mathrm{Inn}(Y)\mathrm{Aut}_{i}(Y)$, thus producing a map $\phi:\mathrm{Inn}(T)\mathrm{Aut}_{i+2}(T)\to
\mathrm{Inn}(Y)\mathrm{Aut}_{i+1}(Y)/\mathrm{Inn}(Y)\mathrm{Aut}_{i}(Y)$ whose kernel contains $\mathrm{Inn}(T)\mathrm{Aut}_{i+1}(T)$,
and we claim that $\ker(\phi)=\mathrm{Inn}(T)\mathrm{Aut}_{i+1}(T)$. Indeed, if
$g\in \ker(\phi)$, then $g^\Lambda\in \mathrm{Inn}(Y)\mathrm{Aut}_{i}(Y)$, so that $g^\Lambda=hk$,
with $h\in \mathrm{Inn}(Y)$ and $k\in\mathrm{Aut}_{i}(Y)$. But $\Lambda$ maps $\mathrm{Inn}(T)$ onto $\mathrm{Inn}(Y)$,
so $h=w^\Lambda$ for some $w\in \mathrm{Inn}(T)$, whence $k=(w^{-1}g)^\Lambda$, so by above $w^{-1}g\in \mathrm{Aut}_{i+1}(T)$
and therefore $h\in \mathrm{Inn}(T)\mathrm{Aut}_{i+1}(T)$, as claimed.
\end{proof}

We write $\alpha=1+\ell p^m$, with $\ell\in\N$, for the remainder of this section.

\begin{theorem}\label{autj3} The assignment $A\mapsto AB^{p^m}$, $B\mapsto B B^{-(2-d)p^m}A^{p^m}$, where $d=0$ if $p\neq 3$
and $d=3^{m-1}\ell$ if $p=3$, extends to an automorphism, say $\Delta$, of $J$. Moreover,
$$\mathrm{Inn}(J)\mathrm{Aut}_3(J)=\langle \Delta\rangle \mathrm{Inn}(J)\mathrm{Aut}_2(J),\;
\mathrm{Aut}_3(J)=\langle \Delta, C\delta\rangle\mathrm{Aut}_2(J),
$$
$$\mathrm{Aut}_3(J)/\mathrm{Aut}_2(J)\cong\Z/p^m\Z\times \Z/p^m\Z,\;
\mathrm{Inn}(J)\mathrm{Aut}_3(J)/\mathrm{Inn}(J)\mathrm{Aut}_2(J)\cong\Z/p^m\Z,
$$
$$
|\mathrm{Aut}_3(J)|=p^{6m},\; |\mathrm{Inn}(J)\mathrm{Aut}_3(J)|=p^{8m}.
$$
\end{theorem}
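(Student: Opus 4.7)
The plan is to produce two automorphisms ($\Delta$ and $C\delta$) whose classes in $\mathrm{Aut}_3(J)/\mathrm{Aut}_2(J)$ are independent elements of order $p^m$, giving a lower bound of $p^{2m}$, and then match this with an upper bound coming from the embedding (\ref{meter}) furnished by Proposition \ref{zi2}. The first task is to verify that $\Delta$ is well defined. I would substitute $A\mapsto AB^{p^m}$, $B\mapsto B\cdot B^{-(2-d)p^m}A^{p^m}$ into the four defining relations of $J$. For the order relations $(AB^{p^m})^{p^{3m}}=1$ and its analogue, since $A^{p^m},B^{p^m}\in Z_3(J)$ the argument runs parallel to the one at the start of the proof of Proposition \ref{autg}, combined with a power computation of the type carried out in (\ref{coli3})--(\ref{coli13}); these power computations reveal the obstructions (\ref{freezing}) and (\ref{freezing2}) in the $p=3$ case, which is precisely what the correction term with $d=3^{m-1}\ell$ is designed to cancel. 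For the commutator relations, I would expand $[AB^{p^m},B\cdot B^{-(2-d)p^m}A^{p^m}]$ using (\ref{comfor}) and Lemma \ref{basw} and verify that the identity $(A^\Delta)^{[A^\Delta,B^\Delta]}=(A^\Delta)^\alpha$ survives, again with the $p=3$ correction absorbing the stray $3^{m-1}$ term.

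Next I check that $\Delta$ and $C\delta$ lie in $\mathrm{Aut}_3(J)$ and are independent modulo $\mathrm{Aut}_2(J)$. Both fix $J/Z_3(J)$ pointwise: $\Delta$ because $A^\Delta A^{-1},B^\Delta B^{-1}\in Z_3(J)$, and $C\delta$ because $C\in Z_4(J)$. To see independence, I examine the induced action on $Z_3(J)/Z_2(J)\cong (\Z/p^m\Z)^2$ spanned by the classes of $A^{p^m}$ and $B^{p^m}$: $\Delta$ contributes a nonzero $B^{p^m}$-component to $A$, while $C\delta$ sends $A$ to $A^\alpha\equiv A\cdot A^{p^m\ell}$ and $B$ to $B^{2-\alpha}\equiv B\cdot B^{-p^m\ell}$ modulo $Z_2(J)$, and the resulting two coordinate vectors in $(\Z/p^m\Z)^4$ are $\Z/p^m\Z$-linearly independent. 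This yields $|\mathrm{Aut}_3(J)/\mathrm{Aut}_2(J)|\geq p^{2m}$.

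The matching upper bound is the main obstacle. I apply Proposition \ref{zi2} twice to obtain the embedding
\[
\mathrm{Aut}_3(J)/\mathrm{Aut}_2(J)\hookrightarrow \mathrm{Aut}_2(H)/\mathrm{Aut}_1(H)\hookrightarrow \mathrm{Aut}_1(K)\cong Z(K)\times Z(K)\cong (\Z/p^m\Z)^4.
\]
To show that the image is only $(\Z/p^m\Z)^2$, I take an arbitrary $g\in \mathrm{Aut}_3(J)$, write $Ag=Au$, $Bg=Bv$ with $u,v\in Z_3(J)$, and reduce modulo $Z_2(J)$ to parameters $(i,j,k,\ell)\in (\Z/p^m\Z)^4$ via $u\equiv A^{ip^m}B^{jp^m}$ and $v\equiv A^{kp^m}B^{\ell p^m}$. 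The key is then to impose $(Au)^{[Au,Bv]}=(Au)^\alpha$ and its dual: using (\ref{comfor}), the fact that $u,v\in Z_3(J)$, and power computations $(Au)^\alpha=Au\cdot(Au)^{\alpha-1}$ of the type used in Proposition \ref{autg}, each relation reduces modulo $Z_2(J)$ to a linear congruence on $(i,j,k,\ell)$ that cuts the parameter space down by a factor of $p^m$. This bookkeeping-heavy step closely parallels the analysis (\ref{coli})--(\ref{coli14}) of Theorem \ref{autk6} and, for $p=3$, absorbs the same $3^{m-1}$ correction from (\ref{freezing})--(\ref{freezing2}). After the two congruences are imposed only a $(\Z/p^m\Z)^2$-worth of $(i,j,k,\ell)$ survives.

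Finally, the remaining assertions are corollaries of order counting. With $|\mathrm{Aut}_3(J)/\mathrm{Aut}_2(J)|=p^{2m}$ in hand and $|\mathrm{Aut}_2(J)|=p^{4m}$ from Proposition \ref{autg}, I get $|\mathrm{Aut}_3(J)|=p^{6m}$ together with $\mathrm{Aut}_3(J)=\langle\Delta,C\delta\rangle\mathrm{Aut}_2(J)$. Since $C\delta\in\mathrm{Inn}(J)\subseteq \mathrm{Inn}(J)\mathrm{Aut}_2(J)$, the quotient $\mathrm{Inn}(J)\mathrm{Aut}_3(J)/\mathrm{Inn}(J)\mathrm{Aut}_2(J)$ collapses the $C\delta$-direction, leaving only $\langle\Delta\rangle$; that $\Delta$ itself stays outside $\mathrm{Inn}(J)\mathrm{Aut}_2(J)$ is read off from the image under the embedding (\ref{meter}), yielding $\mathrm{Inn}(J)\mathrm{Aut}_3(J)/\mathrm{Inn}(J)\mathrm{Aut}_2(J)\cong \Z/p^m\Z$. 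Combined with $|\mathrm{Inn}(J)\mathrm{Aut}_2(J)|=p^{7m}$ from Proposition \ref{autg2}, this gives $|\mathrm{Inn}(J)\mathrm{Aut}_3(J)|=p^{8m}$.
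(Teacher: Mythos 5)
Your proposal is correct and follows essentially the same route as the paper: the paper likewise applies Proposition \ref{zi2} twice to get a homomorphism $\psi:\mathrm{Aut}_3(J)\to Z(K)\times Z(K)$ with kernel $\mathrm{Aut}_2(J)$, parametrizes candidates by $A\mapsto AA^{ip^m}B^{jp^m}$, $B\mapsto BA^{kp^m}B^{lp^m}$, and carries out exactly the exponent computations you defer, arriving at the two independent congruences $i+j(2-d)+l\equiv 0$ and $i+k(2-d)+l\equiv 0 \pmod{p^m}$, which simultaneously yield the existence of $\Delta$ and the $(\Z/p^m\Z)^2$ bound you obtain by separate lower and upper estimates. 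The only cosmetic differences are that the paper computes $|\mathrm{Inn}(J)\mathrm{Aut}_3(J)|=p^{8m}$ directly from $|\mathrm{Inn}(J)\cap\mathrm{Aut}_3(J)|=|Z_4(J)/Z(J)|=p^{4m}$ rather than from the cyclic quotient, and that the constraining congruences must be extracted from the relations in $J$ itself at the $p^{2m}$-exponent level (they depend only on $u,v$ modulo $Z_2(J)$, but they are invisible in $H$ or $K$, by Propositions \ref{auth2} and \ref{autk2}), which is how the paper's computation indeed proceeds.
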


\begin{proof} Applying Proposition \ref{zi2} twice, we obtain
$$
\mathrm{Aut}_3(J)/\mathrm{Aut}_2(J)\hookrightarrow
\mathrm{Aut}_2(H)/\mathrm{Aut}_1(H)\hookrightarrow
\mathrm{Aut}_1(K),
$$
where $\mathrm{Aut}_1(K)\cong Z(K)\times Z(K)$ by Proposition \ref{autk}. In this way we obtain a homomorphism
$\psi:\mathrm{Aut}_3(J)\to Z(K)^2$ such that $\ker(\psi)=\mathrm{Aut}_2(J)$. We proceed to show that
\begin{equation}
\label{impsi}
\mathrm{Im}(\psi)=\langle (a^{p^{m}},b^{-p^{m}}),(b^{p^{m}},b^{-(2-d)p^{m}} a^{p^{m}})\rangle,
\end{equation}
where $\rho: J\to K$ is the canonical projection, $a=A\rho$, $b=B\rho$, and $c=C\rho$.

Let $i,j,k,l\in\N$ and $w,z\in Z_2(J)$. We need to determine under what conditions the assignment
$$
A\mapsto AA^{i p^m}B^{j p^m}w,\; B\mapsto B A^{k p^m} B^{l p^m}z
$$
extends to an automorphism of $J$. By Proposition \ref{autg}, this is the case if and only if
\begin{equation}
\label{pupicat}
A\mapsto AA^{i p^m}B^{j p^m},\; B\mapsto B A^{k p^m} B^{l p^m}
\end{equation}
extends to an automorphism of $J$. We will implicitly use in what follows that $Z_3(J)$ is abelian and that
$A^{p^{2m}},B^{p^{2m}}\in Z(J)$, as indicated above,

We easily see that
\begin{equation}
\label{commie2}
B^{A^s}=BA^{(\alpha-1)s(s-1)/2}C^{-s},\quad s\geq 1.
\end{equation}

For a finite abelian $p$-group, we write $x\mapsto x^{1/2}$ for the inverse automorphism of $x\mapsto x^2$.
Using this notation, and taking $s=r p^m$ in (\ref{commie2}), we obtain
\begin{equation}
\label{commie3}
B^{A^{r p^m}}=BA^{r p^m(1-\alpha)/2}C^{-r p^m},\quad r\in\N.
\end{equation}
which implies $(B^{-1})^{A^{r p^m}}=C^{r p^m}A^{r p^m(\alpha-1)/2}B^{-1}$, whence
\begin{equation}
\label{commie4}
[A^{r p^m},B]=C^{r p^m}A^{r p^m(\alpha-1)/2},\quad r\in\N.
\end{equation}
Applying the automorphism $A\leftrightarrow B$ to (\ref{commie4}) yields
\begin{equation}
\label{commie5}
[B^{r p^m},A]=C^{-r p^m}B^{r p^m(\alpha-1)/2}, \quad r\in\N.
\end{equation}
From $C^A=C A^{1-\alpha}$ and $C^B=B^{\alpha-1}C$, we derive
\begin{equation}
\label{commie6}
C^{A^{r p^m}}=C A^{(1-\alpha)r p^m},\; C^{B^{r p^m}}=B^{(\alpha-1)r p^m}C.
\end{equation}

On the other hand, appealing to (\ref{comfor}), we see that
\begin{equation}
\label{prito}
[AA^{i p^m}B^{j p^m},B A^{k p^m} B^{l p^m}]=
[AA^{i p^m}B^{j p^m},A^{k p^m} B^{l p^m}][AB^{i p^m}B^{j p^m},B]^{A^{k p^m} B^{l p^m}}.
\end{equation}
A second application of (\ref{comfor}) gives
\begin{equation}
\label{prito2}
[AA^{i p^m}B^{j p^m},A^{k p^m} B^{l p^m}]=[A,A^{k p^m} B^{l p^m}]^{A^{i p^m}B^{j p^m}},
\end{equation}
and a third use of (\ref{comfor}) yields
\begin{equation}
\label{prito3}
[A,A^{k p^m} B^{l p^m}]=[A,B^{l p^m}]=[B^{l p^m},A]^{-1}.
\end{equation}
Resorting to (\ref{comfor}) a fourth time, we obtain
\begin{equation}
\label{prito4}
[AA^{i p^m}B^{j p^m},B]=[A,B]^{A^{i p^m}B^{j p^m}}[A^{i p^m}B^{j p^m},B],
\end{equation}
while a final usage of (\ref{comfor}) gives
\begin{equation}
\label{prito5}
[A^{i p^m}B^{j p^m},B]=[A^{i p^m},B]^{B^{j p^m}}.
\end{equation}
Combining (\ref{commie4})-(\ref{prito5}), we derive
\begin{equation}
\label{tuto}
[A A^{i p^m}B^{j p^m},B B^{l p^m}A^{k p^m}]= C^{1+(i+l)p^m}A^{(2k+i)p^m(1-\alpha)/2}
B^{(2j+l)p^m(\alpha-1)/2}.
\end{equation}
It follows from (\ref{tuto}) that
\begin{equation}
\label{tuto2}
(A A^{i p^m}B^{j p^m})^{[A A^{i p^m}B^{j p^m},B B^{l p^m}A^{k p^m}]}=A^{C^{1+(i+l)p^m}} (A^{i p^m})^C (B^{j p^m})^C.
\end{equation}
To compute with (\ref{tuto2}) recall that $\alpha=1+\ell p^m$, which
implies $\alpha^{i p^m}\equiv 1+i\ell p^{2m}\mod p^{3m}$ and $\alpha^{l p^m}\equiv 1+l \ell p^{2m}\mod p^{3m}$. Moreover,
we readily see that $(1+\ell p^m)(1-\ell p^m+\ell^2 p^{2m})\equiv 1\mod p^{3m}$, so that $B^C=B^{1-\ell p^m+\ell^2 p^{2m}}$.
We may now appeal to (\ref{tuto2}) and the relation $A^{p^{2m}}B^{p^{2m}}=1$ to obtain
\begin{equation}
\label{tuto3}
(A A^{i p^m}B^{j p^m})^{[A A^{i p^m}B^{j p^m},B B^{l p^m}A^{k p^m}]}=A^{1+(\ell+i)p^m+(2i+l+j)\ell p^{2m}}B^{jp^m}.
\end{equation}
Making the substitutions $i\leftrightarrow l$ and $j\leftrightarrow k$ in (\ref{tuto3}) gives
\begin{equation}
\label{tuto4}
(A A^{l p^m}B^{k p^m})^{[A A^{l p^m}B^{k p^m},B B^{i p^m}A^{j p^m}]}=A^{1+(\ell+l)p^m+(2l+i+k)\ell p^{2m}}B^{kp^m}.
\end{equation}
Applying the automorphism $A\leftrightarrow B$ to (\ref{tuto4}) yields
\begin{equation}
\label{tuto5}
(B B^{l p^m}A^{k p^m})^{[B B^{l p^m}A^{k p^m},A A^{i p^m}B^{j p^m}]}=B^{1+(\ell+l)p^m+(2l+i+k)\ell p^{2m}}A^{kp^m}.
\end{equation}

We wish to compare $(A A^{i p^m}B^{j p^m})^\alpha$ with (\ref{tuto3}), and $(B B^{l p^m}A^{k p^m})^\alpha$
with (\ref{tuto5}). We begin by computing $(A A^{i p^m}B^{j p^m})^{p^m}$. We first note that
\begin{equation}
\label{pip}
(A A^{i p^m}B^{j p^m})^{p^m}=A^{i p^{2m}}(A B^{j p^m})^{p^m}.
\end{equation}
Next observe that
\begin{equation}
\label{pip2}
(A B^{j p^m})^{p^m}=B^{j p^{2m}}A^{B^{p^m jp^m}}\cdots A^{B^{2jp^m}}A^{B^{jp^m}}.
\end{equation}
Applying the automorphism $A\leftrightarrow B$ to (\ref{commie3}) with $r=uj$, we have
\begin{equation}
\label{pip3}
A^{B^{ujp^m}}=AB^{uj p^m(1-\alpha)/2}C^{uj p^m},\quad u\in\N.
\end{equation}
Substituting (\ref{pip3}), with $u\in\{1,\dots,p^m\}$, in the rightmost factors of (\ref{pip2}) yields
\begin{equation}
\label{pip4}
A^{B^{p^m jp^m}}\cdots A^{B^{2jp^m}}A^{B^{jp^m}}=B^{(1+2+\cdots+p^m)j p^m(1-\alpha)/2}
A C^{p^m j p^m}A C^{(p^m-1)j p^m}\cdots A  C^{2j p^m}A  C^{j p^m}.
\end{equation}
Now $B^{(1+2+\cdots+p^m)j p^m(1-\alpha)/2}=1=C^{p^m j p^m}$. Thus, (\ref{pip4}) gives
\begin{equation}
\label{pip5}
A^{B^{p^m jp^m}}\cdots A^{B^{2jp^m}}A^{B^{jp^m}}=
A C^{jp^m {{p^m}\choose{2}}} A^{C^{jp^m {{p^m}\choose{2}}}}\cdots A^{C^{jp^m {{3}\choose{2}}}} A^{C^{jp^m {{2}\choose{2}}}}.
\end{equation}
Here $C^{jp^m {{p^m}\choose{2}}}=1$ and $A^{C^{jp^m {{u}\choose{2}}}}=A^{1+\ell j p^{2m} {{u}\choose{2}}}$, $u\in\{2,\dots,p^m\}$,
so (\ref{pip}), (\ref{pip2}), and (\ref{pip5}) yield
\begin{equation}
\label{pip6}
(A A^{i p^m}B^{j p^m})^{p^m}=A^{i p^{2m}} B^{j p^{2m}} A^{p^m} A^{\ell j p^{2m}({{2}\choose{2}}+{{3}\choose{2}}+\cdots+{{p^m}\choose{2}})}.
\end{equation}
Thus, recalling the meaning of $d$, we infer from (\ref{freezing2}) and (\ref{pip6}) that
\begin{equation}
\label{pip7}
(A A^{i p^m}B^{j p^m})^{p^m}=A^{p^m+(i+jd)p^{2m}} B^{j p^{2m}}.
\end{equation}
It now follows easily from (\ref{pip7}), $\alpha=1+\ell p^m$, and the relation $A^{p^{2m}}B^{p^{2m}}=1$, that
\begin{equation}
\label{pip8}
(A A^{i p^m}B^{j p^m})^{\alpha}=A^{1+(i+\ell)p^m+\ell(i+jd-j)p^{2m}} B^{j p^{m}}.
\end{equation}
Making the substitutions $i\leftrightarrow l$ and $j\leftrightarrow k$ in (\ref{pip7}) and (\ref{pip8}) gives
\begin{equation}
\label{zim}
(A A^{l p^m}B^{k p^m})^{p^m}=A^{p^m+(l+kd)p^{2m}} B^{k p^{2m}},
\end{equation}
\begin{equation}
\label{pip9}
(A A^{l p^m}B^{k p^m})^{\alpha}=A^{1+(l+\ell)p^m+\ell(l+kd-k)p^{2m}} B^{k p^{m}}.
\end{equation}
Applying the automorphism $A\leftrightarrow B$ to (\ref{pip9}) and (\ref{zim}) yields
\begin{equation}
\label{zim2}
(B B^{l p^m}A^{k p^m})^{p^m}=B^{p^m+(l+kd)p^{2m}} A^{k p^{2m}},
\end{equation}
\begin{equation}
\label{pip10}
(B B^{l p^m}A^{k p^m})^{\alpha}=B^{1+(l+\ell)p^m+\ell(l+kd-k)p^{2m}} A^{k p^{m}}.
\end{equation}

We readily see from (\ref{pip7}) and (\ref{zim2})
that the third and fourth defining relations of $J$ are always preserved. Moreover, (\ref{tuto3}) and (\ref{pip8}),
together with $\gcd(p,\ell)=1$, show that the first defining relation of $J$ is preserved if and only if
\begin{equation}
\label{roj}
i+j(2-d)+l\equiv 0\mod p^m.
\end{equation}
We also see from (\ref{tuto5}) and (\ref{pip10}), together with $\gcd(p,\ell)=1$, that the second defining relation of~$J$ is preserved if and only if
\begin{equation}
\label{roj2}
i+k(2-d)+l\equiv 0\mod p^m.
\end{equation}
Thus, (\ref{pupicat}) extends to an endomorphism, say $\Upsilon$, of $J$ if and only if (\ref{roj}) and (\ref{roj2}) hold,
in which case $\Upsilon$ is an automorphism. Indeed, (\ref{pip7}) ensures that the restriction of $\Upsilon$
to $Z(J)$ is the identity map and, since $J$ is nilpotent, this guarantees that $\ker(\Upsilon)$ is trivial. In particular,
setting $i=0$ and $j=1$, we infer the existence of the automorphism $\Delta$. Moreover,
(\ref{roj}) and (\ref{roj2}) imply that $k\equiv j\mod p^m$. Thus, if (\ref{roj}) and (\ref{roj2}) hold, then $\Upsilon$
is given by
\begin{equation}
\label{roj3}
A\mapsto AA^{i p^m}B^{j p^m},\; B\mapsto B B^{-i-j(2-d)p^m}A^{jp^m}.
\end{equation}
We now deduce from (\ref{roj3}) that (\ref{impsi}) holds.

Now $A^C=A^{1+\ell p^m}$ and $B^C=B^{1-\ell p^m+\ell^2 p^{2m}}$, with $\gcd(p,\ell)=1$,
and $\mathrm{Aut}_2(J)=\ker(\psi)$, so $(C\delta)^\psi$ and $(a^{p^m},b^{-p^m})$ generate the same
subgroup of $Z(K)^2$. As
$\Delta^\psi=(b^{p^{m}},b^{-(2-d)p^{m}} a^{p^{m}})$, we infer $\mathrm{Im}(\psi)=\langle C\delta,\Delta\rangle^\psi$,
which implies $\mathrm{Aut}_3(J)=\langle \Delta, C\delta\rangle\mathrm{Aut}_2(J)$. Given that
$(a^{p^m},b^{-p^m})$ and $(b^{p^{m}},b^{-(2-d)p^{m}} a^{p^{m}})$ generate a subgroup
of $Z(J)^2$ isomorphic to $(\Z/p^m\Z)^2$, we now deduce that
$\mathrm{Aut}_3(J)/\mathrm{Aut}_2(J)\cong (\Z/p^m\Z)^2$, and therefore
$|\mathrm{Aut}_3(J)|=|\mathrm{Aut}_2(J)|\times p^{2m}=p^{6m}$, by Proposition~\ref{autg}. Since
$\mathrm{Aut}_3(J)=\langle \Delta, C\delta\rangle\mathrm{Aut}_2(J)$, we infer
$\mathrm{Inn}(J)\mathrm{Aut}_3(J)=\langle \Delta\rangle\mathrm{Inn}(J)\mathrm{Aut}_2(J)$, whence
$\mathrm{Inn}(J)\mathrm{Aut}_3(J)/\mathrm{Inn}(J)\mathrm{Aut}_2(J)$ is cyclic.
As $|\mathrm{Inn}(J)\mathrm{Aut}_2(J)|=p^{7m}$ by Proposition \ref{autg2}, we are left
to show that $|\mathrm{Inn}(J)\mathrm{Aut}_3(J)|=p^{8m}$. But $|\mathrm{Inn}(J)|=p^{6m}$,
$|\mathrm{Aut}_3(J)|=p^{6m}$ and $|\mathrm{Inn}(J)\cap \mathrm{Aut}_3(J)|=|Z_4(J)/Z(J)|=p^{4m}$,
as required.
\end{proof}

Recall that $J$ has an automorphism $\theta$ that interchanges $A$ and $B$, and inverts $C$.

\begin{theorem}\label{autjfull} We have $\mathrm{Aut}_4(J)=\mathrm{Inn}(J)\mathrm{Aut}_3(J)$,
$\mathrm{Aut}(J)=\langle \theta\rangle \mathrm{Aut}_4(J)$, and $|\mathrm{Aut}(J)|=2p^{8m}$.
\end{theorem}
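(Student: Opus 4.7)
The plan is to run the imbedding tool of Proposition \ref{zi2} twice, passing from $J$ to $H=J/Z(J)$, using the already-established structure results for $\mathrm{Aut}(H)$ to cut off the top of the series (\ref{norser}) for $\mathrm{Aut}(J)$.

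First I would verify the equality $\mathrm{Aut}_4(J)=\mathrm{Inn}(J)\mathrm{Aut}_3(J)$. Apply Proposition \ref{zi2} with $T=J$, $Y=H$, and $i=2$: the second assertion of that proposition gives an imbedding
$$
\mathrm{Inn}(J)\mathrm{Aut}_4(J)/\mathrm{Inn}(J)\mathrm{Aut}_3(J)\hookrightarrow \mathrm{Inn}(H)\mathrm{Aut}_3(H)/\mathrm{Inn}(H)\mathrm{Aut}_2(H).
$$
But Proposition \ref{auth2} states precisely that $\mathrm{Aut}_3(H)=\mathrm{Inn}(H)\mathrm{Aut}_2(H)$, so the right-hand target group is trivial. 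Hence $\mathrm{Inn}(J)\mathrm{Aut}_4(J)=\mathrm{Inn}(J)\mathrm{Aut}_3(J)$, and since $\mathrm{Inn}(J)\subseteq \mathrm{Aut}_4(J)$ (because $J/Z_4(J)$ is abelian of class $\le 1$, equivalently $J=Z_5(J)$), the left side equals $\mathrm{Aut}_4(J)$, giving the desired equality.

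Next I would address $\mathrm{Aut}(J)=\langle\theta\rangle\mathrm{Aut}_4(J)$. Apply Proposition \ref{zi2} once more with $T=J$, $Y=H$, and $i=3$: the first assertion gives an imbedding
$$
\mathrm{Aut}_5(J)/\mathrm{Aut}_4(J)=\mathrm{Aut}(J)/\mathrm{Aut}_4(J)\hookrightarrow \mathrm{Aut}_4(H)/\mathrm{Aut}_3(H)=\mathrm{Aut}(H)/\mathrm{Aut}_3(H),
$$
using that $J$ and $H$ have nilpotency classes $5$ and $4$ respectively. By Theorem \ref{auth6}, the right-hand group has order $2$, generated by the class of $\nu$. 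The automorphism $\theta$ of $J$ descends to $\nu$ under the canonical map $\Lambda:\mathrm{Aut}(J)\to\mathrm{Aut}(H)$, since both interchange the generators $A$ and $B$, so $\theta\notin\mathrm{Aut}_4(J)$ and its image generates the quotient. Therefore $\mathrm{Aut}(J)/\mathrm{Aut}_4(J)\cong \Z/2\Z$ and $\mathrm{Aut}(J)=\langle\theta\rangle\mathrm{Aut}_4(J)$.

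Finally, the order count is immediate: by Theorem \ref{autj3} we have $|\mathrm{Inn}(J)\mathrm{Aut}_3(J)|=p^{8m}$, so by the first equality $|\mathrm{Aut}_4(J)|=p^{8m}$, and combining with the index-$2$ result above yields $|\mathrm{Aut}(J)|=2p^{8m}$. I expect no real obstacle, since the heavy lifting was done in Theorem \ref{autj3} (to control $\mathrm{Aut}_3(J)/\mathrm{Aut}_2(J)$) and in Proposition \ref{auth2} and Theorem \ref{auth6} (to control the quotients above $\mathrm{Aut}_2(H)$ in $H$); the only point to check carefully is that $\theta$ indeed maps to $\nu$, which is transparent from their defining actions on generators.
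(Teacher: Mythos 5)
Your proposal is correct and follows essentially the same route as the paper: two applications of Proposition \ref{zi2} from $J$ to $H=J/Z(J)$, with the target quotients killed by Proposition \ref{auth2} and bounded by Theorem \ref{auth6}, and the order count supplied by Theorem \ref{autj3}. The only cosmetic differences are your justification that $\mathrm{Inn}(J)\subseteq\mathrm{Aut}_4(J)$ via $J=Z_5(J)$ (the paper checks it on generators using $C\in Z_4(J)$) and your explicit remark that $\theta$ descends to $\nu$, which the paper leaves implicit.
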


\begin{proof} As $C\in Z_4(J)$, with $A^B=AC$ and $B^A=BC^{-1}$, we infer
$\mathrm{Inn}(J)\subset \mathrm{Aut}_4(J)$. Likewise we find that $\mathrm{Inn}(H)\subset \mathrm{Aut}_3(H)$.
It follows from Proposition \ref{zi2} that
$\mathrm{Aut}_4(J)/\mathrm{Inn}(J)\mathrm{Aut}_3(J)$ imbeds into
$\mathrm{Aut}_3(H)/\mathrm{Inn}(H)\mathrm{Aut}_2(H)$. The latter group is trivial by Proposition \ref{auth2},
whence $\mathrm{Aut}_4(J)=\mathrm{Inn}(J)\mathrm{Aut}_3(J)$ has order $p^{8m}$ by Theorem \ref{autj3}.
A second application of Proposition \ref{zi2}
yields an imbedding $\mathrm{Aut}(J)/\mathrm{Aut}_4(J)\hookrightarrow \mathrm{Aut}(H)/\mathrm{Aut}_3(H)$,  which
has order 2 by Theorem \ref{auth6}, so
$\mathrm{Aut}(J)=\langle \theta\rangle \mathrm{Aut}_4(J)=\langle \theta\rangle \mathrm{Inn}(J)\mathrm{Aut}_3(J)$ has order $2p^{8m}$.
\end{proof}

In terms of group extensions, we have the normal series
$$
1\subset \mathrm{Aut}_1(J)\subset\mathrm{Aut}_2(J)\subset \mathrm{Aut}_3(J)\subset \mathrm{Aut}_4(J)\subset \mathrm{Aut}(J).
$$
The first factor $\mathrm{Aut}_1(J)\cong (\Z/p^m\Z)^2$ by Proposition \ref{autg}. The second factor
$\mathrm{Aut}_2(J)/\mathrm{Aut}_1(J)\cong Z_2(J)^2/Z(J)^2\cong (\Z/p^m\Z)^4$ also by Proposition \ref{autg},
which yields $\mathrm{Aut}_2(J)\cong (\Z/p^m\Z)^4$ directly as well. The third factor $\mathrm{Aut}_3(J)/\mathrm{Aut}_2(J)
\cong (\Z/p^m\Z)^2$
by Theorem \ref{autj3}. By Theorem \ref{autjfull}, the fourth factor
$$
\mathrm{Aut}_4(J)/\mathrm{Aut}_3(J)\cong \mathrm{Aut}_3(J)\mathrm{Inn}(J)/\mathrm{Aut}_3(J)\cong
\mathrm{Inn}(J)/\mathrm{Inn}(J)\cap \mathrm{Aut}_3(J)\cong J\delta/Z_4(J)\delta
$$
is isomorphic to $(J/Z(J))/(Z_4(J)/Z(J))\cong J/Z_J(H)\cong (\Z/p^m\Z)^2$. The fifth factor
$$
\mathrm{Aut}(J)/\mathrm{Aut}_4(H)\cong \Z/2\Z
$$
by Theorem \ref{autjfull}.

We next find a presentation of $\mathrm{Aut}_4(J)$, the sole Sylow $p$-subgroup of $\mathrm{Aut}(J)$. Note that
$\mathrm{Inn}(J)\cong J/Z(J)\cong H$ is generated by $a=A\delta$ and $b=B\delta$, subject to the defining
relations
\begin{equation}\label{puj}
a^{[a,b]}=a^\alpha, b^{[b,a]}=b^\alpha, a^{p^{2m}}=1, b^{p^{2m}}=1.
\end{equation}
Set $x=(A^{p^{2m}},A^{-p^{2m}})\Psi\in \mathrm{Aut}_2(J)$, as in Proposition \ref{autg}.
By Proposition \ref{autg2}, the group $\mathrm{Inn}(J)\mathrm{Aut}_2(J)=
\langle x\rangle\ltimes\mathrm{Inn}(J)$ is generated by $a,b,x$ subject to the defining relations (\ref{puj}) and
\begin{equation}\label{puj2}
x^{p^m}=1, a^x=a^{1+p^{2m}}=a, b^x=b^{1+p^{2m}}=b.
\end{equation}
By Theorem \ref{autj3}, $\Delta^{p^m}\in \langle x\rangle\ltimes\mathrm{Inn}(J)$. In fact, repeated application
of the definition of $\Delta$, with aid from (\ref{pip7}), (\ref{zim2}), and $(A^{p^{2m}})^\Delta=A^{p^{2m}}$,
yields $\Delta^{p^m}=(A^{-p^{2m}}, A^{(3-d)p^{2m}})\Psi$, where $d$ is defined as in Theorem \ref{autj3}.
On the other hand, we have $\alpha=1+tp^m$, with $t\in\Z$ and $\gcd(t,p)=1$, which implies
$[a,b]^{p^m}=C^{p^m}\delta=(A^{tp^{2m}}, A^{tp^{2m}})\Psi$. Thus
\begin{equation}\label{puj3}
\Delta^{p^m}=[a,b]^{up^m} x^v,
\end{equation}
where $u,v\in\Z$ are uniquely determined modulo $p^m$ by $-1\equiv ut+v,\; 3-d\equiv ut-v\mod p^m$.

Moreover, it is clear that $\mathrm{Inn}(J)$ is normalized by $\Delta$, with
\begin{equation}\label{puj4}
a^{\Delta}=a b^{p^m}, b^{\Delta}=b b^{-(2-d)p^m} a^{p^m}.
\end{equation}

Finally, $\mathrm{Inn}(J)\mathrm{Aut}_2(J)$ is also normalized by $\Delta$, so $x^\Delta\in \langle x\rangle\ltimes\mathrm{Inn}(J)$
and, in fact,
\begin{equation}\label{puj5}
x^{\Delta}=x,
\end{equation}
since $x\in \mathrm{Aut}_1(J)$, $x$ fixes $Z_3(J)$ pointwise, by Proposition \ref{autg}, $\Delta\in \mathrm{Aut}_3(J)$,
and $\Delta$ fixes $Z(J)$ pointwise, all of which readily implies $x\Delta=\Delta x$. We have proven the following result.

\begin{theorem} The group $\mathrm{Aut}_4(J)$ is a normal
Sylow $p$-subgroup of $\mathrm{Aut}(J)$ of order $p^{8m}$ and index 2 in $\mathrm{Aut}(J)$. It is generated by
$a=\delta A,b=\delta B,x=(A^{p^{2m}},A^{-p^{2m}})\Psi$, and $\Delta$, as given in Proposition \ref{autg} and Theorem \ref{autj3},
and subject to the defining relations (\ref{puj})-(\ref{puj5}).
\end{theorem}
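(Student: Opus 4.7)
The plan is to verify three claims in order: that $\mathrm{Aut}_4(J)$ is the unique normal Sylow $p$-subgroup of $\mathrm{Aut}(J)$ of index $2$, that the four elements $a,b,x,\Delta$ generate it, and that (\ref{puj})--(\ref{puj5}) is a presentation.

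The first two claims are essentially bookkeeping from what has already been proved. By Theorem \ref{autjfull}, $|\mathrm{Aut}(J)|=2p^{8m}$ while $|\mathrm{Aut}_4(J)|=p^{8m}$, so $\mathrm{Aut}_4(J)$ has index $2$; being characteristic (as the kernel of the canonical map $\mathrm{Aut}(J)\to\mathrm{Aut}(J/Z_4(J))$) it is normal, and since $p$ is odd it is the sole Sylow $p$-subgroup. For the generating set, Theorem \ref{autjfull} gives $\mathrm{Aut}_4(J)=\mathrm{Inn}(J)\mathrm{Aut}_3(J)$; Theorem \ref{autj3} gives $\mathrm{Aut}_3(J)=\langle\Delta,C\delta\rangle\mathrm{Aut}_2(J)$, and since $C\delta=[a,b]\in\mathrm{Inn}(J)$ we may absorb $C\delta$; finally Proposition \ref{autg2} gives $\mathrm{Inn}(J)\mathrm{Aut}_2(J)=\langle x\rangle\ltimes\mathrm{Inn}(J)$. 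Stringing these decompositions together produces $\mathrm{Aut}_4(J)=\langle a,b,x,\Delta\rangle$.

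Verification that the relations hold in $\mathrm{Aut}_4(J)$ is already distributed across the paragraphs preceding the statement: (\ref{puj}) is the presentation of $\mathrm{Inn}(J)\cong H$; (\ref{puj2}) follows from $x\in\mathrm{Aut}_2(J)$ fixing $Z_3(J)$ pointwise (Proposition \ref{autg}) together with $x^{p^m}=1$, which forces $x$ to centralize $a,b$ modulo nothing in $\mathrm{Inn}(J)$; (\ref{puj4}) is the defining formula of $\Delta$ from Theorem \ref{autj3}; (\ref{puj3}) is obtained by iterating this $\Delta$-image formula using (\ref{pip7}) and (\ref{zim2}), then matching with $[a,b]^{p^m}=C^{p^m}\delta=(A^{tp^{2m}},A^{tp^{2m}})\Psi$ inside $\mathrm{Aut}_2(J)$; and (\ref{puj5}) follows because $x$ fixes $Z_3(J)$ pointwise while $\Delta$ fixes $Z(J)$ pointwise, which forces $[x,\Delta]=1$ on a generating set.

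The remaining and principal step is to show that the abstract group $G$ presented by these generators and relations has order at most $p^{8m}$, so that the natural surjection $G\twoheadrightarrow\mathrm{Aut}_4(J)$ (which exists because all relations are satisfied in the target) is an isomorphism by cardinality. The plan is to build a normal form. Relations (\ref{puj}) alone make $\langle a,b\rangle$ a quotient of $H$ in $G$, so $|\langle a,b\rangle|\le p^{6m}$. Relations (\ref{puj2}) make $x$ central in $\langle a,b,x\rangle$ of exponent dividing $p^m$, so $|\langle a,b,x\rangle|\le p^{7m}$. Relations (\ref{puj4}) and (\ref{puj5}) say $\Delta$ normalizes $\langle a,b,x\rangle$, and (\ref{puj3}) places $\Delta^{p^m}$ inside $\langle a,b,x\rangle$. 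Consequently every element of $G$ has the form $\Delta^t w$ with $0\le t<p^m$ and $w\in\langle a,b,x\rangle$, yielding $|G|\le p^m\cdot p^{7m}=p^{8m}$, as needed. The main obstacle is the sufficiency step, and within it the delicate point is that (\ref{puj4}) specifies $\Delta$-conjugation only on $a$ and $b$; one must argue that since conjugation by $\Delta$ is a group homomorphism on any subgroup it normalizes, (\ref{puj4}) together with (\ref{puj5}) determines the $\Delta$-action on all of $\langle a,b,x\rangle$, which is what makes the normal-form argument go through.
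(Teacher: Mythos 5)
Your proposal is correct and follows essentially the same route as the paper: generation and the order $p^{8m}$ come from Theorem \ref{autjfull}, Theorem \ref{autj3}, and Proposition \ref{autg2}, the relations are verified exactly as in the discussion preceding the theorem, and sufficiency is the standard order count for the presented group (at most $p^{6m}$ from (\ref{puj}), times $p^m$ from the central $x$ of (\ref{puj2}), times $p^m$ from $\Delta$ with $\Delta^{p^m}\in\langle a,b,x\rangle$ by (\ref{puj3})--(\ref{puj5})), compared with $|\mathrm{Aut}_4(J)|=p^{8m}$. The paper leaves this last counting step implicit, and your normal-form formulation of it (including the remark that (\ref{puj4})--(\ref{puj5}) pin down the $\Delta$-action on all of $\langle a,b,x\rangle$, with $\Delta^{p^m}\in\langle a,b,x\rangle$ upgrading $\langle a,b,x\rangle^{\Delta}\subseteq\langle a,b,x\rangle$ to normalization) is exactly the intended argument.
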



\noindent{\bf Acknowledgments.} We thank Volker Gebhardt for useful discussions and his help with Magma,
and the referee for a careful reading of the paper and several useful suggestions.






\end{document}